\def\titlepaper{Semisimple 4-dimensional topological field theories cannot detect exotic smooth structure}
\newenvironment{tz}[1][]{%
			\begin{tikzpicture}[baseline={([yshift=-.8ex]current bounding 					box.center)},#1] %
				}{%
			\end{tikzpicture} %
			}
\tikzset{dot/.style={circle, scale=0.3, fill=black, thick, draw}}
\tikzset{knot/.style={draw=white, double distance=1, line width=0.5, double=black}}
\renewcommand{\to}[1][]{\ensuremath{\xrightarrow{#1}}}
\newcommand{\To}[1][]{\ensuremath{\xRightarrow{#1}}}
\theoremstyle{plain} 
\newtheorem{maintheorem}{Theorem}
\newtheorem{maincor}[maintheorem]{Corollary}
\newtheorem{theorem}{Theorem}[section]
\newtheorem{corollary}[theorem]{Corollary}          
\newtheorem{cor}[theorem]{Corollary}          
\newtheorem{prop}[theorem]{Proposition}
\theoremstyle{definition} 
\newtheorem{definition}[theorem]{Definition}
\theoremstyle{remark}  
\newtheorem{remark}[theorem]{Remark}
\newtheorem{example}[theorem]{Example}
\newtheorem{question}[theorem]{{Question}}
\newtheorem*{guide*}{Guide}
\newtheorem*{outline*}{Outline}
\newtheorem*{remarkohc*}{Remark on higher categories and the cobordism hypothesis}
\newtheorem*{assumpfield*}{Assumptions on the base field}
\DeclareMathOperator{\Hom}{Hom}
\DeclareMathOperator{\End}{End}
\newcommand{\id}{\mathrm{id}}
\newcommand{\iso}{\cong}
\newcommand{\Bord}{\mathrm{Bord}}
\newcommand{\Hilb}{\mathrm{Hilb}}
\newcommand{\Vect}{\mathrm{Vect}}
\newcommand{\tVect}{\mathrm{2Vect}}
\def\cB{\mathcal B}\def\cC{\mathcal C}\def\cD{\mathcal D}
\def\ZZ{\mathbb Z}
\newcommand{\CPt}{\mathbb{C}P^2}
\newcommand{\CP}{\mathbb{C}P}
\newcommand{\sVect}{\mathrm{sVect}}
\newcommand{\CY}{\mathrm{CYK}}
\newcommand{\Zsym}{\mathrm{Z}_{\mathrm{sym}}}
\def\handlegap{\hspace{0.3cm}}
\title{Semisimple $4$-dimensional topological field theories cannot detect exotic smooth structure}
\author{David Reutter}
\address{Max Planck Institute for Mathematics}
\email{reutter@mpim-bonn.mpg.de}
\urladdr{https://www.davidreutter.com}
\newcommand\ignore[1]{}
\begin{document}

\begin{abstract}
We prove that semisimple $4$-dimensional oriented topological field theories lead to stable diffeomorphism invariants and can therefore not distinguish homeomorphic closed oriented smooth $4$-manifolds and homotopy equivalent simply connected closed oriented smooth $4$-manifolds. We show that all currently known $4$-dimensional field theories are semisimple, including unitary field theories, and once-extended field theories which assign algebras or linear categories to $2$-manifolds. As an application, we compute the value of a semisimple field theory on a simply connected closed oriented $4$-manifold in terms of its Euler characteristic and signature.

Moreover, we show that a semisimple $4$-dimensional field theory is invariant under $\CPt$-stable diffeomorphisms if and only if the Gluck twist acts trivially. This may be interpreted as the absence of fermions amongst the `point particles' of the field theory. Such fermion-free field theories cannot distinguish homotopy equivalent $4$-manifolds. 

Throughout, we illustrate our results with the Crane-Yetter-Kauffman field theory associated to a ribbon fusion category, settling in the negative the question of whether it is sensitive to smooth structure. As a purely algebraic corollary of our results applied to this field theory, we show that a ribbon fusion category contains a fermionic object if and only if its Gauss sums vanish.
\end{abstract}

\maketitle	

\section{Introduction}

\subsection{Summary of results} 
Motivated  by a wealth of powerful field-theoretically-inspired $4$-manifold invariants~\cite{donaldson, witten, ozsvathszabo, kronheimermrowka}, a major open problem in quantum topology is the construction of a $4$-dimensional topological field theory in the sense of Atiyah-Segal~\cite{Atiyah,Segal} which is sensitive to exotic smooth structure. In this paper, we prove that no \emph{semisimple} topological field theory (Definition~\ref{def:semisimple}) can achieve this goal. Every currently known example of a full $4$-dimensional oriented topological field theory is semisimple and hence subject to our results, including invertible field theories (Example~\ref{exm:invertible}), unitary field theories (Theorem~\ref{thm:unitaryss}), and once-extended field theories (Theorem~\ref{thm:extended}) with values in any of the symmetric monoidal bicategories appearing in the `bestiary of $2$-vector spaces' of~\cite[App A]{III}, such as
\begin{itemize}
\item[--]  the bicategory of algebras, bimodules and bimodule maps;
\item[--] the bicategory of additive and idempotent complete linear categories, linear functors and natural transformations.
\end{itemize}

\noindent Concretely, we prove that semisimple field theories lead to stable diffeomorphism invariants.\looseness=-2
\begin{maintheorem}\label{thm:main}Let $Z$ be a semisimple oriented $4$-dimensional topological field theory and let $W$ and $W'$ be $S^2\times S^2$-stably diffeomorphic\footnote{Two connected compact oriented $4$-bordisms $W, W':M\to N$ are \emph{$S^2\times S^2$-stably diffeomorphic} if there is an integer $n\in \mathbb{Z}_{\geq 0}$ and an orientation-preserving diffeomorphism of bordisms between the $n$-fold connected sums $W\#^n(S^2\times S^2)$ and $W'\#^n(S^2\times S^2)$, where the connected sum is taken in the interior of the bordisms.} connected compact oriented $4$-bordisms.\\ Then $Z(W) = Z(W')$.
\end{maintheorem}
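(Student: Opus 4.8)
The plan is to reduce everything to showing that connect-summing with $S^2\times S^2$ rescales $Z$ by a single nonzero scalar, after which $S^2\times S^2$-stable diffeomorphism invariance falls out of functoriality. Concretely, suppose we can produce a scalar $\lambda\neq 0$, independent of $W$, with $Z(W\#(S^2\times S^2)) = \lambda\,Z(W)$ for every connected compact oriented $4$-bordism $W$. Iterating gives $Z(W\#^n(S^2\times S^2)) = \lambda^n\,Z(W)$, and since an orientation-preserving diffeomorphism of bordisms $W\#^n(S^2\times S^2)\iso W'\#^n(S^2\times S^2)$ forces $Z(W\#^n(S^2\times S^2)) = Z(W'\#^n(S^2\times S^2))$ by functoriality of a topological field theory, cancelling the nonzero factor $\lambda^n$ yields $Z(W)=Z(W')$.

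To isolate this scalar I would localize the connect sum inside the interior ball where it takes place. Writing $B^4$ for the standard cap and $K := (S^2\times S^2)\setminus(\mathring B^4\sqcup \mathring B^4)$ for the twice-punctured $S^2\times S^2$, regarded as a bordism $S^3\to S^3$, one has $(S^2\times S^2)\setminus\mathring B^4 = K\cup_{S^3}B^4$; hence removing an interior ball from $W$ and gluing in $K$ before recapping expresses
\[
Z(W\#(S^2\times S^2)) = (\id\otimes Z(B^4))\circ(\id\otimes Z(K))\circ Z(W\setminus\mathring B^4),
\]
while $Z(W) = (\id\otimes Z(B^4))\circ Z(W\setminus\mathring B^4)$. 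Everything therefore reduces to the purely local claim that the endomorphism $Z(K)$ of the Frobenius algebra $Z(S^3)$ is a nonzero scalar multiple of the identity; equivalently, that the vector $Z((S^2\times S^2)\setminus\mathring B^4)$ lies on the ``vacuum line'' spanned by $Z(B^4\colon\emptyset\to S^3)$.

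To prove $Z(K)=\lambda\,\id$ I would combine semisimplicity of $Z(S^3)$ with the handle structure of $S^2\times S^2$, which has no $1$- or $3$-handles: the twice-punctured $K$ is built from the collar $S^3\times[0,1]$ by attaching two $2$-handles along a $0$-framed Hopf link. In the idempotent basis of the semisimple algebra $Z(S^3)$ the two handle-attachment operators act diagonally, and the fact that their attaching circles form a Hopf link should force the composite to be a sum over a complete family of ``states,'' which by semisimplicity collapses to a multiple of the identity. Capping off then identifies the eigenvalue with the global-dimension-type quantity $\lambda = Z(S^2\times S^2)/Z(S^4)$.

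The last point is nonvanishing of $\lambda$, which is where the hypothesis genuinely bites: the nondegeneracy built into the definition of semisimplicity should make $\lambda$ a sum of squares of the ``dimensions'' attached to the point particles of the theory—rather than the phase-weighted Gauss sum attached to $\CPt$, which can vanish—so that $\lambda\neq 0$. I expect the main obstacle to be exactly the collapse of the previous paragraph: showing that the composite of the two Hopf-linked $2$-handle operators is honestly a scalar multiple of the identity on all of $Z(S^3)$, not merely an invertible operator, and simultaneously confirming that its eigenvalue is nonzero. Once that is in place, the remainder is bookkeeping with the gluing axioms and functoriality.
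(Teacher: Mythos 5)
Your high-level skeleton --- show that connect-summing with $S^2\times S^2$ multiplies $Z$ by a fixed invertible scalar, then cancel --- is the same as the paper's, but the two steps you defer are exactly where the content of the theorem lives, and the first one is false as you state it. No universal scalar $\lambda$ exists for a general semisimple theory: if $Z = Z_1 \oplus Z_2$ is a direct sum of indecomposable theories, then $Z(S^3) \iso k\oplus k$, and your operator $Z(K)$ acts on the two idempotent summands by eigenvalues $\lambda_i = Z_i(S^2\times S^2)/Z_i(S^4)$, which in general differ (e.g.\ take $Z_1$ the trivial theory, with $\lambda_1 = 1$, and $Z_2 = \CY_{\cC}$ for a nontrivial modular $\cC$, with $\lambda_2 = \cD_{\cC} \neq 1$); one checks directly on $W = S^4$ versus $W = S^2\times S^2$ that no single $\lambda$ works for the sum. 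So the reduction to ``$Z(K)$ is a nonzero scalar'' must be preceded by Sawin's decomposition theorem (Propositions~\ref{prop:Sawin} and~\ref{prop:decomposess}), splitting $Z$ into indecomposable semisimple summands with $Z_i(S^3)\iso k$ and arguing summand by summand. After that reduction, the part you flag as the ``main obstacle'' --- collapsing the Hopf-linked handle operators to a scalar --- is vacuous, since every endomorphism of $Z_i(S^3)\iso k$ is a scalar, and multiplicativity under connected sum is the easy Proposition~\ref{prop:multiplicativityconnectsum}.

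What is then left, nonvanishing of the scalar, is the real theorem (Theorem~\ref{thm:S2invertible}), and your proposed justification --- that semisimplicity should make it a ``sum of squares of dimensions'' --- is not an argument and cannot become one at this level of generality: the base field is an arbitrary algebraically closed field, possibly of positive characteristic, where sums of squares can vanish. More tellingly, your proposal only ever invokes semisimplicity of $Z(S^3)$, which for an indecomposable theory carries no information ($Z(S^3)\iso k$), whereas the definition of semisimple also demands semisimplicity of the fusion algebra $Z(S^2\times S^1)$ --- and that is the hypothesis that drives the paper's proof. There, semisimplicity of $Z(S^2\times S^1)$ is converted into invertibility of its window endomorphism $m\circ\Delta$ (Proposition~\ref{prop:window}), and a Kirby-calculus diffeomorphism (Proposition~\ref{prop:manifolddecomposition}) exhibits the unit vector $Z(S^2\times D^2)\in Z(S^2\times S^1)$ as an eigenvector of that window endomorphism with eigenvalue $Z(S^4)^{-2}Z(S^3\times S^1)Z(S^2\times S^2)$; invertibility of the window map plus nonvanishing of the unit force this eigenvalue, hence $Z(S^2\times S^2)$, to be invertible. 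Your own setup actually points at the right object --- your two Hopf-linked handle operators factor as $Z(S^3)\to Z(S^2\times S^1)\to Z(S^3)$ through the fusion algebra, not as diagonal endomorphisms of $Z(S^3)$ --- but without bringing the semisimplicity of $Z(S^2\times S^1)$ to bear on that middle stage, the central claim of your outline remains unproved.
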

Theorem~\ref{thm:main} is proven in Section~\ref{sec:S2S2stability} by decomposing $Z$ into a finite direct sum of indecomposable theories which are multiplicative under connected sum (Proposition~\ref{prop:multiplicativityconnectsum}) and invertible on $S^2\times S^2$ (Theorem~\ref{thm:S2invertible}).

Using a theorem of Gompf~\cite{Gompf} and the classification of stable diffeomorphism classes of $4$-manifolds~\cite{Wall, Kreck}, we obtain our main theorem as a corollary of Theorem~\ref{thm:main}.%
\begin{maincor}\label{cor:main} Let $Z$ be a semisimple oriented $4$-dimensional topological field theory and let $M$ and $N$ be closed oriented $4$-manifolds.
\begin{enumerate}
\item If there is an orientation-preserving homeomorphism $M\to N$, then $Z(M) = Z(N)$.
\item If $M$ and $N$ are simply connected and if there is an orientation-preserving homotopy equivalence $M\to N$, then $Z(M)= Z(N)$.
\item If there is an orientation-preserving homotopy equivalence $M\to N$, and if the universal covers of $M$ and $N$ do not admit spin structures, then $Z(M) = Z(N)$.

More precisely, for any connected closed oriented $4$-manifold whose universal cover does not admit a spin structure, $Z(M)$ only depends on the Euler characteristic $\chi(M)$, the signature $\sigma(M)$, the fundamental group $\pi_1(M)$ and the image of the fundamental class $c_*[M]\in H_4(\pi_1(M), \mathbb{Z})$ under a classifying map $c:M\to K(\pi_1(M), 1)$ of the universal cover.
\end{enumerate}
\end{maincor}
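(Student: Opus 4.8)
The plan is to derive all three statements from Theorem~\ref{thm:main} by realizing the two manifolds as $S^2\times S^2$-stably diffeomorphic closed bordisms $\emptyset\to\emptyset$. I would first record a bookkeeping observation about the equivalence relation in the footnote of Theorem~\ref{thm:main}, which stabilizes both manifolds by the \emph{same} number $n$ of copies of $S^2\times S^2$. Since $\chi(M\#^n(S^2\times S^2))=\chi(M)+2n$ and $\sigma(S^2\times S^2)=0$, any two $S^2\times S^2$-stably diffeomorphic closed $4$-manifolds necessarily have equal Euler characteristic and equal signature; conversely, if two manifolds of equal Euler characteristic become diffeomorphic after \emph{independent} stabilization by $S^2\times S^2$, the two numbers of summands must agree, so they are stably diffeomorphic in the sense required. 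This reduces everything to the classical (independently stabilized) classification results together with the matching of $\chi$.

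For part~(1), I would invoke Gompf's theorem~\cite{Gompf} that homeomorphic closed smooth oriented $4$-manifolds become diffeomorphic after connect sum with copies of $S^2\times S^2$; as $\chi$ is a homeomorphism invariant, the observation above promotes this to an $S^2\times S^2$-stable diffeomorphism, and Theorem~\ref{thm:main} gives $Z(M)=Z(N)$. For part~(2), an orientation-preserving homotopy equivalence of simply connected closed oriented $4$-manifolds induces an isometry of intersection forms, and conversely the oriented homotopy type is determined by the intersection form (Milnor--Whitehead); Wall's theorem~\cite{Wall} then makes manifolds with isometric forms $S^2\times S^2$-stably diffeomorphic, while the isometry equates $b_2$, $\sigma$ and hence $\chi=2+b_2$, so Theorem~\ref{thm:main} again applies.

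Part~(3) is the substantive case. When the universal cover of $M$ admits no spin structure, i.e. $w_2(\widetilde M)\neq0$, the manifold is totally non-spin and its normal $1$-type is the oriented one $B\pi\times BSO\to BSO$, with $\pi=\pi_1(M)$; Kreck's modified-surgery classification~\cite{Kreck} then states that two such manifolds with the same fundamental group are $S^2\times S^2$-stably diffeomorphic exactly when they agree in $\Omega_4^{SO}(B\pi)$ under a map $c$ classifying the universal cover. I would next compute this bordism group: using $\Omega_1^{SO}=\Omega_2^{SO}=\Omega_3^{SO}=0$, the Atiyah--Hirzebruch spectral sequence degenerates into a split short exact sequence
\begin{equation*}
0\to\Omega_4^{SO}(\mathrm{pt})\to\Omega_4^{SO}(B\pi)\to H_4(B\pi;\mathbb{Z})\to0,
\end{equation*}
split by the signature, so that the class $[M,c]$ is pinned down precisely by the pair $(\sigma(M),c_*[M])$. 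It then remains to verify that $\chi$, $\sigma$, $\pi_1$, the class $c_*[M]\in H_4(\pi;\mathbb{Z})$, and the total non-spin condition are all invariants of the orientation-preserving homotopy type: the first three are standard, $c_*[M]$ is preserved because an orientation-preserving homotopy equivalence $f\colon M\to N$ sends $[M]$ to $[N]$ and satisfies $c_N\circ f\simeq c_M$, and $w_2(\widetilde M)\neq0$ is homotopy invariant since $w_2(M)$ is a homotopy invariant of the closed manifold $M$ by Wu's formula and $w_2(\widetilde M)$ is its pullback along the universal covering. Feeding the resulting equality of stable diffeomorphism classes into Theorem~\ref{thm:main} yields $Z(M)=Z(N)$ and, more precisely, shows $Z(M)$ depends only on $(\chi,\sigma,\pi_1,c_*[M])$.

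The hard part will be part~(3): correctly identifying the totally non-spin normal $1$-type, reducing Kreck's abstract bordism invariant to the concrete pair $(\sigma,c_*[M])$ via the spectral sequence, and --- most delicately --- checking that the data at our disposal (an orientation-preserving homotopy equivalence, not an a priori normal map) genuinely equates the bordism classes in $\Omega_4^{SO}(B\pi)$. The homotopy invariance of $w_2(\widetilde M)$ through Wu's formula is the subtle structural point, since it is what makes ``totally non-spin'', and hence the use of the oriented rather than a twisted-spin normal $1$-type, a property of the homotopy type rather than of the smooth or topological manifold.
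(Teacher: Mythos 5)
Your proof is correct and follows essentially the same route as the paper: Gompf's theorem for part (1), Wall's theorem for part (2), and Kreck's classification of stable diffeomorphism classes of totally non-spin $4$-manifolds for part (3), each fed into Theorem~\ref{thm:main}. The only difference is one of detail rather than strategy: you open up the black boxes the paper simply cites --- deriving Kreck's totally non-spin classification from the normal $1$-type formalism and the Atiyah--Hirzebruch computation of $\Omega_4^{SO}(B\pi)$, and recording the Euler-characteristic bookkeeping that reconciles independently stabilized classical statements with the equal-$n$ convention in the paper's definition of $S^2\times S^2$-stable diffeomorphism.
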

Corollary~\ref{cor:main}, proven in Section~\ref{sec:S2S2stability}, is in marked contrast to the $3$-dimensional situation where semisimple topological field theories such as the Witten-Reshetikhin-Turaev field theory~\cite{RT} can distinguish certain homotopy equivalent lens spaces~\cite{FreedGompf}.

Using Theorem~\ref{thm:main}, we may evaluate a semisimple field theory on a closed oriented $4$-manifold $M$ by evaluating it on a simpler stably diffeomorphic $4$-manifold $N$. We exemplify this in Corollary~\ref{cor:explicitformula} where we give an explicit expression for the value of an indecomposable semisimple field theory $Z$ on a connected, simply connected closed oriented $4$-manifold $M$ in terms of the Euler characteristic $\chi(M)$ and signature $\sigma(M)$, and the value of $Z$ on the oriented $4$-manifolds $S^4, S^2\times S^2, \CPt, \overline{\CP}^2$, and the Kummer surface $K3$.

\subsection{$\CPt$-stability, the Gluck twist and emergent fermions}
In Section~\ref{sec:fermion}, we show that the behaviour of a semisimple topological field theory $Z$ on a manifold whose universal cover admits a spin structure strongly depends on the presence of fermions in $Z$. A $4$-dimensional oriented topological field theory is said to have \emph{emergent fermions} if the Gluck twist $\phi\in \mathrm{Diff}(S^2\times S^1)$ acts non-trivially (see Remark~\ref{rem:fermion}). To lift the non-spinnability assumption in Corollary~\ref{cor:main} 3., we prove the following correspondence between the presence of fermions in a $4$-dimensional oriented topological field theory $Z$ and the invariance of $Z$ under $\CPt$-stable diffeomorphisms.

\begin{maintheorem}\label{thm:mainfermion} A semisimple oriented $4$-dimensional topological field theory is invariant under $\CPt$-stable diffeomorphisms\footnote{Two connected compact oriented $4$-bordisms $W,W':M\to N$ are \emph{$\CPt$-stably diffeomorphic} if there are integers $n,m \in \mathbb{Z}_{\geq 0}$ and an orientation preserving diffeomorphism of bordisms between $W\#^n \CPt \#^m \overline{\CP}^2$ and $W'\#^{n} \CPt \#^{m}\overline{CP}^2$.} if and only if the Gluck twist $Z(\phi) \in \End(Z(S^2\times S^1))$ acts as the identity, that is if and only if the theory has no emergent fermions.
\end{maintheorem}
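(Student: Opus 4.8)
The plan is to reduce the statement to a single nonvanishing criterion and then to identify the relevant scalar, by a cut-and-glue computation, with a matrix element of the Gluck twist.

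First I would invoke the decomposition used in the proof of Theorem~\ref{thm:main}: write $Z\cong\bigoplus_i Z_i$ as a finite direct sum of indecomposable semisimple theories, each multiplicative under connected sum (Proposition~\ref{prop:multiplicativityconnectsum}) with $Z_i(S^4)$ and $Z_i(S^2\times S^2)$ invertible (Theorem~\ref{thm:S2invertible}). Since $Z(\phi)=\bigoplus_i Z_i(\phi)$, it suffices to treat each summand. For an indecomposable $Z_i$, connected-sum multiplicativity shows that $\#\CPt$ and $\#\overline{\CP}^2$ act on $Z_i$ by the scalars $\lambda_i=Z_i(\CPt)/Z_i(S^4)$ and $\bar\lambda_i=Z_i(\overline{\CP}^2)/Z_i(S^4)$. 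I claim $Z$ is $\CPt$-stably invariant if and only if every $\lambda_i$ and $\bar\lambda_i$ is nonzero. One direction is the cancellation argument of Theorem~\ref{thm:main}: if the scalars are invertible, an orientation-preserving diffeomorphism $W\#^n\CPt\#^m\overline{\CP}^2\cong W'\#^n\CPt\#^m\overline{\CP}^2$ yields $Z_i(W)\lambda_i^n\bar\lambda_i^m=Z_i(W')\lambda_i^n\bar\lambda_i^m$, hence $Z_i(W)=Z_i(W')$, and summing over $i$ gives $Z(W)=Z(W')$. For the converse I would use the classical diffeomorphism $(S^2\times S^2)\#\CPt\cong\CPt\#\CPt\#\overline{\CP}^2$, which exhibits $S^2\times S^2$ and $\CPt\#\overline{\CP}^2$ as $\CPt$-stably diffeomorphic; applying $\CPt$-stable invariance to these connect-summed with an arbitrary closed $X$ gives $\sum_i Z_i(X)\big[Z_i(S^2\times S^2)-Z_i(\CPt\#\overline{\CP}^2)\big]/Z_i(S^4)=0$ for all $X$, and linear independence of the distinct indecomposable theories (as functionals on closed $4$-manifolds) forces $Z_i(S^2\times S^2)=Z_i(\CPt\#\overline{\CP}^2)$, and in particular $\lambda_i\bar\lambda_i\neq 0$, for every $i$.

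Next I would identify these scalars with the Gluck twist. The two oriented $S^2$-bundles over $S^2$ arise as the two gluings $(S^2\times D^2)\cup_g (S^2\times D^2)$ indexed by the clutching group $\pi_1(\mathrm{Diff}^+(S^2))=\pi_1(SO(3))=\ZZ/2$; the trivial class gives $S^2\times S^2$ and the nontrivial class gives $S^2\tilde\times S^2\cong\CPt\#\overline{\CP}^2$, the corresponding self-diffeomorphism of $S^2\times S^1$ being exactly the Gluck twist $\phi$. Cutting both manifolds along the separating $S^2\times S^1$ therefore yields $Z_i(S^2\times S^2)=\langle\bar v_i, v_i\rangle$ and $Z_i(\CPt\#\overline{\CP}^2)=Z_i(S^2\tilde\times S^2)=\langle\bar v_i, Z_i(\phi)v_i\rangle$, where $v_i=Z_i(S^2\times D^2)$ is the unit of the commutative Frobenius algebra $A_i=Z_i(S^2\times S^1)$, $\bar v_i$ is its dual cap, and $Z_i(\phi)$ is an involution because $\phi^2$ is isotopic to the identity. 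Combined with the previous paragraph, the theorem reduces, summand by summand, to the assertion that $\langle\bar v, Z(\phi) v\rangle\neq 0$ if and only if $Z(\phi)=\id$.

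The forward implication is immediate: $Z(\phi)=\id$ gives $\langle\bar v, Z(\phi)v\rangle=\langle\bar v, v\rangle=Z(S^2\times S^2)\neq0$. The substance is the contrapositive of the converse: a nontrivial Gluck action must annihilate this ``Gauss sum'', and here I expect the main obstacle. The plan is to analyze $Z(\phi)$ as an involutive symmetry of the semisimple Frobenius algebra $A=Z(S^2\times S^1)$ and to show that a $(-1)$-eigenvector — an \emph{emergent fermion} in the sense of Remark~\ref{rem:fermion} — organizes the point particles of the theory into a free $\ZZ/2$-action under which the contributions to $\langle\bar v, Z(\phi)v\rangle$ pair up with opposite sign and cancel. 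This is the structural generalization of the ribbon-categorical identity $\theta_X d_X^2+\theta_{X\otimes f}d_{X\otimes f}^2=\theta_X d_X^2-\theta_X d_X^2=0$ that forces the Gauss sum of a ribbon fusion category with a transparent fermion $f$ to vanish. The difficulty is to produce this fermionic pairing purely from the topological field theory — that is, to extract the symmetry ``$X\mapsto X\otimes f$'' and its sign from the Gluck involution and the Frobenius structure alone, without recourse to the Crane–Yetter–Kauffman model — so that $\langle\bar v, Z(\phi)v\rangle=Z(\CPt)Z(\overline{\CP}^2)/Z(S^4)$ is seen to vanish exactly when $Z(\phi)\neq\id$. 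Specializing the resulting criterion to the Crane–Yetter–Kauffman theory would then recover the stated algebraic corollary that a ribbon fusion category contains a fermion if and only if its Gauss sums vanish.
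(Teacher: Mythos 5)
Your reduction is set up correctly, but the theorem's hard direction is left unproven, and you say so yourself: everything hinges on the claim that for an indecomposable semisimple theory, $\langle\bar v, Z(\phi)v\rangle = \widetilde{Z}(\CPt\#\overline{\CP}^2)\neq 0$ forces $Z(\phi)=\id$, and your proposed ``fermionic pairing'' argument for it is only a hope, not a proof. The paper supplies exactly the missing ingredient, and it is geometric rather than algebraic: Proposition~\ref{prop:diffeoCP1} exhibits, by a relative Kirby calculus argument (a single handle slide of a $1$-framed unknot over a $\langle 0\rangle$-framed unknot), an orientation-preserving diffeomorphism of bordisms $\CPt\#\mathrm{Cyl}(\phi)\iso\CPt\#(S^2\times S^1\times[0,1])$. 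Applying an indecomposable $Z$ and multiplicativity (Proposition~\ref{prop:multiplicativityconnectsum}) gives $\widetilde{Z}(\CPt)\,Z(\phi)=\widetilde{Z}(\CPt)\,\id$, i.e.\ either $Z(\CPt)=0$ or $Z(\phi)=\id$ --- precisely the dichotomy you need, with no eigenvector bookkeeping at all. (The paper then closes the loop as in your step three: Gluck surgery identifies $S^2\widetilde{\times}S^2\iso\CPt\#\overline{\CP}^2$, so triviality of $Z(\phi)$ plus invertibility of $Z(S^2\times S^2)$ from Theorem~\ref{thm:S2invertible} gives invertibility of $Z(\CPt)$ and $Z(\overline{\CP}^2)$, whence stability.) It is doubtful your purely algebraic route can succeed: the compatibility of $Z(\phi)$ with the vector $Z(S^2\times D^2)$ and the Frobenius pairing is genuinely $4$-dimensional information, and the paper itself notes (in its final remark) that the ribbon-categorical identity $\theta_Y\tau^+_{\cC}=\tau^+_{\cC}$ you want to generalize is the algebraic \emph{analogue} of Proposition~\ref{prop:diffeoCP1}, not a substitute available in the abstract TFT setting.

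A secondary gap: in your first paragraph, the implication ``$\CPt$-stability $\Rightarrow$ all $\lambda_i,\bar\lambda_i\neq 0$'' invokes linear independence of the indecomposable summands as functionals on closed $4$-manifolds, which is neither proved in the paper nor obvious (isomorphic summands, for instance, give identical functionals). This step is avoidable: the paper's notion of stability applies to \emph{bordisms}, so you can puncture $S^2\times S^2$ and $\CPt\#\overline{\CP}^2$ and compare them as bordisms $\emptyset\to S^3$; since $Z$ is block-diagonal on bordisms between non-empty manifolds, equality there holds componentwise, and capping off with $Z_i(D^4)$ gives $Z_i(S^2\times S^2)=Z_i(\CPt\#\overline{\CP}^2)$ for each $i$. (The paper sidesteps this entirely by proving the stable-implies-no-fermions direction for arbitrary, not necessarily semisimple, theories via Corollary~\ref{cor:glucktrivial}, and only then decomposing into indecomposables for the converse.)
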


The following corollary is then an immediate consequence of the classification of $\CPt$-stable diffeomorphism classes of closed oriented $4$-manifolds.
\begin{maincor}\label{cor:mainfermion}Let $Z$ be a semisimple oriented $4$-dimensional topological field theory without emergent fermions and let $M$ and $N$ be closed, oriented $4$-manifolds such that there is an orientation-preseving homotopy equivalence $M\to N$. Then, $Z(M)= Z(N)$.

More precisely, for any connected closed oriented $4$-manifold $M$, $Z(M)$ only depends on the Euler characteristic $\chi(M)$, the signature $\sigma(M)$,  the fundamental group $\pi_1(M)$ and the image of the fundamental class $c_*[M]\in H_4(\pi_1(M), \mathbb{Z})$ under a classifying map $c:M\to K(\pi_1(M), 1)$ of the universal cover.
\end{maincor}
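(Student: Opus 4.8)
The plan is to derive this corollary directly from Theorem~\ref{thm:mainfermion} and the classification of $\CPt$-stable diffeomorphism classes of closed oriented $4$-manifolds. Since $Z$ is semisimple and has no emergent fermions, the Gluck twist $Z(\phi)$ acts as the identity, so Theorem~\ref{thm:mainfermion} tells us that $Z$ is invariant under $\CPt$-stable diffeomorphisms. Viewing a closed oriented $4$-manifold as a bordism $\emptyset\to\emptyset$, this means precisely that $Z$ factors through the set of $\CPt$-stable diffeomorphism classes: if $M\#^n\CPt\#^m\overline{\CP}^2$ and $N\#^n\CPt\#^m\overline{\CP}^2$ are orientation-preservingly diffeomorphic for some $n,m\geq 0$, then $Z(M)=Z(N)$.

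I would then invoke the classification theorem, which asserts that two connected closed oriented $4$-manifolds carrying the same tuple $(\chi,\sigma,\pi_1,c_*[M])$---Euler characteristic, signature, fundamental group, and image $c_*[M]\in H_4(\pi_1(M),\ZZ)$ of the fundamental class under a classifying map of the universal cover---are $\CPt$-stably diffeomorphic. (Note that since the footnote's definition adds the same numbers $n$ and $m$ of summands on both sides, $\CPt$-stabilization preserves both $\chi$ and $\sigma$, so these are bona fide $\CPt$-stable invariants.) Combined with the first paragraph, this immediately gives the ``more precisely'' statement: $Z(M)$ depends only on $(\chi,\sigma,\pi_1,c_*[M])$.

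The first assertion then follows once I check that an orientation-preserving homotopy equivalence $f:M\to N$ preserves all four invariants. That $\chi$ and $\pi_1$ are homotopy invariant is clear; the signature is preserved because it is computed from the cup-product pairing on $H^2(-;\RR)$ together with the orientation, and $f$ is orientation-preserving so that $f_*[M]=[N]$. For the fundamental-class invariant, the induced isomorphism $f_*\colon\pi_1(M)\to\pi_1(N)$ identifies $H_4(\pi_1(M),\ZZ)$ with $H_4(\pi_1(N),\ZZ)$, and the classifying maps satisfy $c_N\circ f\simeq (Bf_*)\circ c_M$; since $f_*[M]=[N]$ this yields $c_*[M]=c_*[N]$ under the identification. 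Hence $M$ and $N$ share the tuple $(\chi,\sigma,\pi_1,c_*[M])$, are $\CPt$-stably diffeomorphic, and so $Z(M)=Z(N)$; the disconnected case reduces to this via the monoidality of $Z$ under disjoint union. The only genuinely external input, and the step requiring the most care, is the $\CPt$-stable classification itself together with the bookkeeping showing that $c_*[M]$ is natural under the chosen $\pi_1$-identification; everything else is a formal consequence of Theorem~\ref{thm:mainfermion} and the standard homotopy invariance of $\chi$, $\sigma$, and $\pi_1$.
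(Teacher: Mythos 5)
Your proposal is correct and takes essentially the same route as the paper: Theorem~\ref{thm:mainfermion} reduces everything to $\CPt$-stable diffeomorphism invariance, Kreck's classification (cited in the paper via Teichner) identifies $\CPt$-stable diffeomorphism classes with the tuple $(\chi,\sigma,\pi_1,c_*[M])$, and that tuple is an invariant of the oriented homotopy type. The paper's proof is a one-line version of exactly this argument; your write-up just makes explicit the naturality of $c_*[M]$ and the homotopy invariance checks that the paper leaves implicit.
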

Theorem~\ref{thm:mainfermion} and Corollary~\ref{cor:mainfermion} are proven in Section~\ref{sec:fermion}. Comparing Corollaries~\ref{cor:main} and~\ref{cor:mainfermion} raises the following question:
\begin{question}\label{q:teichner} In~\cite[Exm 5.2.4]{teichnerthesis}, Teichner constructs two closed, oriented, homotopy equivalent $4$-manifolds $M$ and $N$ that are not $S^2\times S^2$-stably diffeomorphic. Is there a semisimple topological field theory which distinguishes these $4$-manifolds? By Corollary~\ref{cor:mainfermion} such a field theory necessarily needs to have emergent fermions.
\end{question}

As an application of Theorem~\ref{thm:mainfermion}, we compute the value of an indecomposable semisimple field theory on a connected, simply connected closed oriented $4$-manifold. 
\begin{maincor}\label{cor:explicitfermion}Let $Z$ be an indecomposable semisimple oriented $4$-dimensional topological field theory and let $M$ be a connected, simply connected closed oriented $4$-manifold. \\Depending on whether $M$ admits a spin structure and $Z$ has emergent fermions, $\widetilde{Z}(M):= Z(S^4)^{-1}Z(M)$ can be computed as follows:
\def\hsp{\hphantom{\hspace{1.8cm}}}
\[\arraycolsep=1.4pt\def\arraystretch{2}
\begin{array}{l|c|c} & M\text{ spinnable} & M\text{ non-spinnable}\\ \hline
Z\text{ has fermions}&~~~~~~~~ \widetilde{Z}(K3)^{-\frac{\sigma(M)}{16}} ~\widetilde{Z}(S^2\times S^2)^{\frac{1}{2} \left(\chi(M)-2 + \frac{11}{8} \sigma(M)\right)} ~~~~~~~~&\hsp 0\hsp
\\ \hline
Z \text{ has no fermions }&\multicolumn{2}{c}{\widetilde{Z}(\CPt)^{\frac{1}{2} \left(\chi(M)+\sigma(M)-2\right)}~\widetilde{Z}(\overline{\CP}^2)^{\frac{1}{2}\left(\chi(M)-\sigma(M)-2\right)}}
\end{array}
\]
Moreover, except for the top right entry, all entries in the above table are invertible.
\end{maincor}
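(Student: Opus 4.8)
The plan is to reduce the computation to the multiplicativity and stabilization-invariance of $\widetilde{Z}$ together with the stable classification of simply connected $4$-manifolds. By Proposition~\ref{prop:multiplicativityconnectsum}, for an indecomposable theory $Z(M\#N)\,Z(S^4)=Z(M)\,Z(N)$, so the normalized invariant $\widetilde{Z}(M)=Z(S^4)^{-1}Z(M)$ is multiplicative, $\widetilde{Z}(M\#N)=\widetilde{Z}(M)\widetilde{Z}(N)$, with $\widetilde{Z}(S^4)=1$; as $Z$ is indecomposable its values on closed $4$-manifolds lie in the base field. By Theorem~\ref{thm:main}, $\widetilde{Z}$ is always invariant under $S^2\times S^2$-stable diffeomorphism, and by Theorem~\ref{thm:mainfermion} it is moreover $\CPt$-stably invariant exactly when $Z$ has no emergent fermions. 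Since Theorem~\ref{thm:S2invertible} makes $\widetilde{Z}(S^2\times S^2)$ invertible, it suffices to write each $M$, up to the appropriate stabilization, as a connected sum of standard blocks and to read off exponents, using $\chi(\#_i X_i)=\sum_i\chi(X_i)-2(k-1)$ and $\sigma(\#_i X_i)=\sum_i\sigma(X_i)$ for $k$ summands.

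For the spinnable case with fermions I would use that a simply connected spin $4$-manifold is classified up to $S^2\times S^2$-stabilization by $(\chi,\sigma)$. Matching invariants, $M$ becomes diffeomorphic, after adding equally many copies of $S^2\times S^2$ to both sides, to $\#^{a}K3\,\#^{b}(S^2\times S^2)$ with $a=-\sigma(M)/16$ (an integer by Rokhlin) and $b=\tfrac12(\chi(M)-2+\tfrac{11}{8}\sigma(M))$ chosen so the Euler characteristics agree; multiplicativity and Theorem~\ref{thm:main} then give the top-left entry. A subtlety is that $b$ need not be non-negative---whether it can be negative is exactly the open $11/8$-conjecture---in which case one stabilizes $M$ rather than $\#^{a}K3$, the identity surviving precisely because $\widetilde{Z}(S^2\times S^2)$ is invertible. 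For $\sigma(M)>0$ one runs the same argument with $\overline{K3}$ and converts via $\widetilde{Z}(K3)\,\widetilde{Z}(\overline{K3})=\widetilde{Z}(S^2\times S^2)^{22}$, which holds because $K3\#\overline{K3}$ and $\#^{22}(S^2\times S^2)$ are both spin with $(\chi,\sigma)=(46,0)$. This same relation shows $\widetilde{Z}(K3)$ is invertible, so the top-left entry is invertible.

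For the fermion-free case I would invoke $\CPt$-stable invariance (Theorem~\ref{thm:mainfermion}). By Donaldson's diagonalization theorem together with the classification of indefinite odd unimodular forms, every simply connected $M$ is $\CPt$-stably diffeomorphic to $\#^{p}\CPt\,\#^{q}\overline{\CP}^2$ with $p=b_2^+=\tfrac12(\chi+\sigma-2)$ and $q=b_2^-=\tfrac12(\chi-\sigma-2)$, since the $\CPt$-stable class is determined by $(\chi,\sigma)$. Multiplicativity yields the bottom row, and invertibility of $\widetilde{Z}(\CPt)$ and $\widetilde{Z}(\overline{\CP}^2)$ follows from $\widetilde{Z}(\CPt)\widetilde{Z}(\overline{\CP}^2)=\widetilde{Z}(S^2\times S^2)$, as $\CPt\#\overline{\CP}^2$ and $S^2\times S^2$ share $(\chi,\sigma)=(4,0)$ and are thus $\CPt$-stably diffeomorphic.

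The remaining top-right entry is the crux. Here $M$ is non-spin, so the blow-up relation $M\#(S^2\times S^2)\iso M\#\CPt\#\overline{\CP}^2$ holds as a genuine diffeomorphism, and applying $\widetilde{Z}$ gives $\widetilde{Z}(M)\bigl(\widetilde{Z}(S^2\times S^2)-\widetilde{Z}(\CPt)\widetilde{Z}(\overline{\CP}^2)\bigr)=0$. Setting $x=\widetilde{Z}(\CPt\#\overline{\CP}^2)$ and $s=\widetilde{Z}(S^2\times S^2)$ and specializing to $M=\CPt\#\overline{\CP}^2$ forces $x(x-s)=0$, so over the base field $x\in\{0,s\}$; the fermion-free analysis already shows $x=s$ when there are no emergent fermions. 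The heart of the matter is the converse: emergent fermions force $x\neq s$, hence $x=0$. I expect this to be the main obstacle, as it amounts to detecting the nontrivial Gluck action $Z(\phi)\in\End(Z(S^2\times S^1))$ precisely on the pair $(\CPt\#\overline{\CP}^2,\,S^2\times S^2)$; I would extract it from the proof of Theorem~\ref{thm:mainfermion} by realizing these as the two $S^2$-bundles over $S^2$, glued from two copies of $S^2\times D^2$ along $S^2\times S^1$ by the identity respectively by $\phi$, so that their values differ exactly by the insertion of $Z(\phi)$. Granting $x=0$, a further application of the blow-up relation to $\overline{\CP}^2$ gives $\widetilde{Z}(\overline{\CP}^2)\,s=x\,\widetilde{Z}(\overline{\CP}^2)=0$, whence $\widetilde{Z}(\overline{\CP}^2)=0$ and symmetrically $\widetilde{Z}(\CPt)=0$; since $p+q\geq1$ for every non-spin $M$, this yields $\widetilde{Z}(M)=0$ and completes the table.
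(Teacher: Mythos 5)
Most of your proposal is correct and runs parallel to the paper: the multiplicativity setup, the spin row via Wall's stable classification with the $K3$/$\overline{K3}$ bookkeeping (this is essentially the paper's proof of Corollary~\ref{cor:explicitformula}), and the fermion-free row via $\CPt$-stability from Theorem~\ref{thm:mainfermion} together with the classification of $\CPt$-stable diffeomorphism classes by $(\chi,\sigma)$. Your treatment of possibly negative exponents and of the invertibility of $\widetilde{Z}(K3)$ via $K3\#\overline{K3}$ versus $\#^{22}(S^2\times S^2)$ is also fine.

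The gap is exactly at the step you flagged as the crux. Writing $Z(S^2\times S^2)=\epsilon\circ u$ and $Z(\CPt\#\overline{\CP}^2)=\epsilon\circ Z(\phi)\circ u$, where $u=Z(S^2\times D^2):k\to Z(S^2\times S^1)$ and $\epsilon=Z(S^2\times D^2):Z(S^2\times S^1)\to k$, does show the two invariants ``differ by insertion of $Z(\phi)$,'' but it does \emph{not} show they differ as scalars when $Z(\phi)\neq\id$: the composite $\epsilon\circ Z(\phi)\circ u$ is a single matrix element of $Z(\phi)$, and a non-identity operator can perfectly well have the same matrix element as the identity on this particular vector--covector pair. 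So the bundle-gluing picture only yields the easy (and, for your argument, unneeded) implication $Z(\phi)=\id\Rightarrow x=s$; the implication you need, ``fermions force $x\neq s$,'' is not a consequence of it. The paper's mechanism is genuinely different: Proposition~\ref{prop:diffeoCP1} produces a diffeomorphism of \emph{bordisms} $\CPt\#\mathrm{Cyl}(\phi)\iso\CPt\#(S^2\times S^1\times[0,1])$, which upon applying $Z$ and multiplicativity gives the operator identity $\widetilde{Z}(\CPt)\,Z(\phi)=\widetilde{Z}(\CPt)\,\id$ on all of $Z(S^2\times S^1)$; thus invertibility of $Z(\CPt)$ forces $Z(\phi)=\id$ as an operator, not merely in one matrix element. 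This is the content of Theorem~\ref{thm:emergentfermions} (equivalence of 2 and 3), which the paper's proof of the present corollary simply cites to conclude $\widetilde{Z}(\CPt)=\widetilde{Z}(\overline{\CP}^2)=0$ in the fermionic case.

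Note that you can close the gap without reproving anything, by using Theorem~\ref{thm:mainfermion} as a black box rather than trying to extract a statement from its proof: if both $\widetilde{Z}(\CPt)$ and $\widetilde{Z}(\overline{\CP}^2)$ were nonzero, then multiplicativity would make $Z$ invariant under $\CPt$-stabilization, which by Theorem~\ref{thm:mainfermion} contradicts the presence of emergent fermions; hence one of them vanishes, so $x=\widetilde{Z}(\CPt)\widetilde{Z}(\overline{\CP}^2)=0$, and your concluding blow-up argument (applied to $\CPt$ and $\overline{\CP}^2$, then to arbitrary non-spin simply connected $M$ via Wall and Theorem~\ref{thm:main}) goes through verbatim. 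With that substitution your proof is complete and is essentially the paper's, differing only in that it re-derives Corollary~\ref{cor:explicitformula} along the way instead of citing it.
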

Corollary~\ref{cor:explicitfermion} is proven in Section~\ref{sec:fermionapp}.
 
 \subsection{The Crane-Yetter-Kauffman field theory}
Throughout this paper, we use the Crane-Yetter-Kauffman~\cite{CYK} topological field theory $\CY_{\cC}$, defined for an arbitrary ribbon fusion category $\cC$, as our guiding example of an indecomposable semisimple oriented $4$-dimensional topological field theory. Applying Corollary~\ref{cor:main} to this field theory settles in the negative the question~\cite{CYK} of whether $\CY_{\cC}$ for a general (not necessarily modular) ribbon fusion category is sensitive to smooth structure.

The field theory $\CY_{\cC}$ has emergent fermions if and only if $\cC$ contains a `fermion' --- a transparent simple object with non-trivial twist (Example~\ref{exm:CYfermion}). The values of $\CY_{\cC}$ on $S^4, S^2\times S^2, \CPt$ and $\overline{\CP}^2$ encode important algebraic invariants of $\cC$ (see for example~\cite{braided}), namely the \emph{global dimension} of $\cC$ (see Example~\ref{exm:CYone})
\begin{align*}\CY_{\cC}(S^4) &=\cD_{\cC}:= \sum_{X_i \in \mathrm{Irr}(\cC)} \dim(X_i)^2,
\\
\intertext{the \emph{global dimension} of the symmetric center of $\cC$ (see Example~\ref{exm:CYS2})}
\CY_{\cC}(S^4)^{-2} \CY_{\cC}(S^2\times S^2) &=\sum_{X_i \in \mathrm{Irr}(\Zsym(\cC))} \dim(X_i)^2,
\\
\intertext{and the normalized \emph{Gauss sums} (see Example~\ref{exm:CYCPT})}
\CY_{\cC}(S^4)^{-2} \CY_{\cC}(\CPt) &= \cD_{\cC}^{-1} \sum_{X_i \in \mathrm{Irr}(\cC)} \theta_i \dim(X_i)^2 \\ \CY_{\cC}(S^4)^{-2} \CY_{\cC}(\overline{\CP}^2) &= \cD_{\cC}^{-1}\sum_{X_i \in \mathrm{Irr}(\cC)} \theta_i^{-1} \dim(X_i)^2.
\end{align*}
In particular, applying Corollary~\ref{cor:explicitfermion} to $\CY_{\cC}$ leads to a topological proof that a ribbon fusion category over an algebraically closed field of characteristic zero contains a simple transparent object with non-trivial twist if and only if one, or equivalently both, of the Gauss sums are zero (Corollary~\ref{cor:Gausssum}). 

\begin{question} Treating the field theory $\CY_{\cC}$ as an invariant of the ribbon fusion category $\cC$ and following Corollary~\ref{cor:explicitfermion}, one might expect that if $\cC$ contains a transparent simple object with non-trivial twist, the algebraic invariant $\CY_{\cC}(K3)$ should play a similarly important role as the global dimensions $\CY_{\cC}(S^4), \CY_{\cC}(S^2\times S^2)$ and the Gauss sums $\CY_{\cC}(\CPt), \CY_{\cC}(\overline{\CP}^2)$. Since $K3$ admits a handle decomposition without $1$- and $3$-handles~\cite{HarerKasKirby}, this invariant $\CY_{\cC}(K3)$ can be computed by evaluating a certain framed $22$-component link~\cite[Fig 2.15]{HarerKasKirby} labelled by objects of $\cC$ (as described in Example~\ref{exm:CYone}). Can this (much more complicated) invariant be expressed in terms of known invariants of $\cC$?\looseness=-2
\end{question}

\subsection{Related work}
In~\cite{unitary}, Freedman, Kitaev, Nayak, Slingerland, Walker and Wang construct a pairing on formal linear combinations of closed manifolds and investigate its positivity properties. As a consequence, it is shown that a unitary topological field theory cannot distinguish smoothly s-cobordant manifolds. Since any two s-cobordant manifolds are stably diffeomorphic~\cite{quinn83} (but not vice versa) and since any unitary topological field theory is semisimple (Theorem~\ref{thm:unitaryss}, but again not vice versa), our result may be viewed both as a strengthening and a generalization of the $4$-dimensional results of~\cite{unitary}. The efficiency of unitary field theories as invariants of smooth manifolds in other dimensions is studied in~\cite{3dpos, KTpositive}.

To our knowledge, the observation that sensitivity to smooth structure of a 4-dimensional topological field theories requires some form of  nilpotency with respect to connected summing with $S^2 \times S^2$ was first made by Frank Quinn (see e.g.~\cite{quinn}).

Our description of the Crane-Yetter-Kauffman theory $\CY_{\cC}$ mostly follows Barrett and B\"arenz' work on dichromatic invariants~\cite{dichromatic}. In particular, we generalize their formula~\cite[Lem 3.12]{dichromatic} for the value of the dichromatic invariant on simply connected $4$-manifolds to an analogous formula for arbitrary indecomposable semisimple field theories (Corollaries~\ref{cor:explicitformula} and~\ref{cor:explicitfermion}). Their formula in turn is a generalization of a computation in~\cite{CYKCompute} which expresses the $4$-manifold invariant resulting from the Crane-Yetter-Kauffman theory $\CY_{\cC}$ for a modular category $\cC$ (and hence invertible field theory $\CY_{\cC}$, see also~\cite[Sec 1.3]{SchommerPriesTori}) in terms of Euler characteristic and signature. 

Similar to the Crane-Yetter-Kauffman theory, the oriented $4$-manifold invariants in~\cite{dichromatic,Cui,  DR, CuiHopf} are either proven or expected to arise from once-extended topological field theories with values in one of the symmetric monoidal bicategories of the `bestiary of $2$-vector spaces' of~\cite[App A]{III} and should therefore be subject to our results.

\subsection{Outline}

Section~\ref{mainsec:def} concerns the definition and examples of semisimple field theories. After recalling background material in Section~\ref{sec:background}, we define semisimple field theories in Section~\ref{sec:semisimple}. In Sections~\ref{sec:unitary} and~\ref{sec:extended} we prove that both unitary and extended field theories are semisimple.

In Section~\ref{sec:S2S2stability}, we prove Theorem~\ref{thm:main} and investigate its consequences.
 In Section~\ref{sec:main}, after establishing that indecomposable semisimple field theories are multiplicative under connected sums, we combine a certain diffeomorphism of $4$-bordisms (Proposition~\ref{prop:manifolddecomposition}) with a well-known algebraic characterization of semisimple Frobenius algebras (Proposition~\ref{prop:window}), to prove that such field theories do not vanish on $S^2\times S^2$ (Theorem~\ref{thm:S2invertible}). Theorem~\ref{thm:main} then follows from decomposing a semisimple field theory into its components. In Section~\ref{sec:consequencesA}, we prove Corollary~\ref{cor:main} and explicitly compute the $4$-manifold invariant arising from an indecomposable semisimple field theory on simply connected closed $4$-manifolds (Corollary~\ref{cor:explicitformula}). 

In the last Section~\ref{sec:fermion}, we investigate the interplay between $\CPt$-stability and the Gluck twist. Theorem~\ref{thm:mainfermion} is proven in Section~\ref{sec:mainfermion}, again by constructing a certain diffeomorphism of $4$-bordisms (Proposition~\ref{prop:diffeoCP1}) to establish the theorem for indecomposable semisimple field theories (Theorem~\ref{thm:emergentfermions}). In Section~\ref{sec:fermionapp}, we prove Corollaries~\ref{cor:mainfermion} and~\ref{cor:explicitfermion}.

\subsection{Acknowledgements}
I am grateful to Chris Douglas for numerous discussions about field theories and $4$-manifolds, William Olsen for making me aware of the relevance of stable diffeomorphisms to topological field theories, Peter Teichner for help with streamlining the proof of Proposition~\ref{prop:manifolddecomposition}, and to Zhenghan Wang for pointing out the relevance of Corollary~\ref{cor:Gausssum} to the theory of ribbon fusion categories. I am also grateful to the anonymous referees for their comments and suggestions, and for the hospitality and financial support of the Max-Planck Institute for Mathematics where this work was carried out.
\section{Semisimple four-dimensional topological field theories}
\label{mainsec:def}

\subsection{Background}\label{sec:background}
Throughout, we let $k$ be an algebraically closed field and denote the symmetric monoidal category of $k$-vector spaces and linear maps by $\Vect_k$. 

All manifolds appearing in this paper will be smooth and oriented. In Propositions~\ref{prop:manifolddecomposition} and~\ref{prop:diffeoCP1} we use handle diagrams and the Kirby calculus of handle moves to prove that certain closed oriented $4$-manifolds are diffeomorphic. We refer the reader to~\cite{GompfStip} for a thorough introduction to these techniques. Given two closed oriented $(n-1)$-manifolds $M$ and $N$, recall that an \emph{oriented $n$-bordism} $M\to N$ is a compact oriented $n$-manifold $W$ together with an orientation preserving diffeomorphism $i_W:\overline{M} \sqcup N \to \partial W$, where $\overline{M}$ denotes the manifold $M$ with the opposite orientation. An \emph{orientation preserving diffeomorphism of oriented bordisms} $W,W':M\to N$ is an orientation preserving diffeomorphism $f:W\to W'$ such that $f \circ i_W = i_{W'}$. We follow common conventions and surpress the diffeomorphisms $i_W$ from our notation, leaving it to the reader to recover them from context. We let $\Bord_n$ denote the symmetric monoidal category of closed oriented $(n-1)$-manifolds and diffeomorphism classes of $n$-bordisms between them. A careful definition of this category can for example be found in~\cite{Kock}.

Following the Atiyah-Segal axiomatization~\cite{Atiyah,Segal}, an \emph{oriented topological field theory} is a symmetric monoidal functor $\Bord_{n} \to \Vect_k$. Concretely, this amounts to an assignment of a vector space $Z(M)$ to every closed oriented $(n-1)$-manifold $M$ and a linear map $Z(W):Z(M) \to Z(N)$ to every (diffeomorphism class of) oriented $n$-bordism $W:M\to N$, in a way that is compatible with gluing of bordisms and disjoint union.

Recall that a \emph{commutative Frobenius algebra} $(A,m,u, \Delta, \epsilon)$ is a $k$-vector space $A$ equipped with the structure of a commutative algebra $(m:A\otimes A \to A, u:k\to A)$ and a cocommutative coalgebra $(\Delta:A\to A\otimes A , \epsilon:A \to k)$ such that $m$ and $\Delta$ fulfill the following \emph{Frobenius compatibility condition}:
\begin{equation}(\id_A \otimes m)\circ(\Delta \otimes \id_A) =(m\otimes \id_A)\circ (\id_A \otimes \Delta)\label{eq:Frobenius}
\end{equation}
(In any Frobenius algebra, it can be shown that the expression~\eqref{eq:Frobenius} furthermore equals $\Delta \circ m$.)
Commutative Frobenius algebra objects are defined analogously in any symmetric monoidal category. In particular, for $n\geq 2$, the $(n-1)$-sphere $S^{n-1}$ is a commutative Frobenius algebra object in the bordism category $\Bord_n$ with unit $u_{n-1}: \emptyset\to S^{n-1}$ and counit $\epsilon_{n-1}:S^{n-1} \to \emptyset$ given by the $n$-disk $D^n$, and with multiplication $m_{n-1}:S^{n-1} \sqcup S^{n-1} \to S^{n-1}$ and comultiplication $\Delta_{n-1}: S^{n-1} \to S^{n-1} \sqcup S^{n-1}$ given by the `pair of pants bordism' obtained from removing two embedded $n$-disks from an $n$-disk. More generally, since any closed oriented $k$-manifold $M$ ($0\leq k \leq n-1$) induces a symmetric monoidal functor $-\times M: \Bord_{n-k} \to \Bord_{n}$, the manifold $S^{n-k-1} \times M$ admits the structure of a commutative Frobenius algebra object $(m_{n-k-1} \times M, u_{n-k-1}\times M, \Delta_{n-k-1}\times M, \epsilon_{n-k-1}\times M)$.

Besides their well-known role in the classification of two-dimensional oriented topological field theories (see e.g.~\cite{Kock}), commutative Frobenius algebras play important roles in the study of topological field theories in any dimension. Indeed, much of this paper is concerned with the following commutative Frobenius algebras associated to any $4$-dimensional oriented topological field theory.

\begin{definition} Let $Z$ be an oriented $4$-dimensional topological field theory. Its \emph{algebra of local operators} is the commutative Frobenius algebra \[\left(Z(S^3), Z(m_{3}), Z(u_{3}), Z(\Delta_3), Z(\epsilon_3)\right).\] Its \emph{fusion algebra} is the commutative Frobenius algebra \[\left(Z(S^2\times S^1), Z(m_2\times S^1), Z(u_2\times S^1), Z(\Delta_2 \times S^1), Z(\epsilon_2\times S^1)\right).\]
\end{definition}

\begin{remark}\label{rem:physics}
The terminology `algebra of local operators' and `fusion algebra' is inspired by physics.
Physical topological field theories are expected to be \emph{local} or \emph{extended}, also assigning algebraic data to manifolds of higher codimension. Informally, for an $n$-dimensional topological field theory $Z$, the value $Z(S^k)$ (for $0\leq k \leq n-1$) should be thought of as encoding the `collection' (really: an object of some $n-k$-category) of labels of $(n-k-1)$-dimensional strata in $n$-manifolds  (where the sphere $S^k$ is thought of as the linking sphere of that stratum). In particular, for a $4$-dimensional topological field theory, $Z(S^3)$ encodes the `local operators' of the field theory which can be inserted into points of $4$-manifolds. Similarly, for a once-extended theory, $Z(S^2)$ encodes the data labelling $1$-dimensional strata in $4$-manifolds. Equivalently, if we think of our $4$-manifolds as `spacetimes' and of these $1$-dimensional strata as `worldlines' of point particles, $Z(S^2)$ encodes the point particles of the $4$-dimensional field theory. Since the algebra structure on $Z(S^2\times S^1)$ may be thought of as a decategorification, or trace, of the monoidal structure on $Z(S^2)$ induced from inclusions of $3$-disks, it may be thought of as encoding the `fusion of point particles' in the quantum field theory $Z$.
\end{remark}
\begin{example}\label{exm:CYone} Our guiding example throughout this paper is the \emph{Crane-Yetter-Kauffman theory}~\cite{CYK}, an oriented $4$-dimensional topological field theory $\CY_{\cC}:\Bord_{4} \to \Vect_k$ over an algebraically closed field $k$ of characteristic zero, defined for any ribbon fusion category $\cC$ (see~\cite{dichromatic} for a definition of ribbon fusion category).
Our use of the Crane-Yetter-Kauffman theory closely follows~\cite{dichromatic}, where the resulting invariant of closed oriented $4$-manifolds $M$ is expressed\footnote{To extend the Crane-Yetter-Kauffman $4$-manifold invariant from~\cite{dichromatic} to a topological field theory we need to use the normalization denoted $\mathrm{CY}_{\cC}$ in~\cite{dichromatic}, rather than the one used in their main definition and denoted $\widehat{\mathrm{CY}}_{\cC}$. Explicitly, on a closed oriented $4$-manifold $M$, the invariants are related as follows: $\widehat{\mathrm{CY}}_{\cC}(M) = \mathrm{CY}_{\cC}(M)~\cD_{\cC}^{1-\chi(M)}$, where $\cD_{\cC}:= \sum_{X_i \in \mathrm{Irr}(\cC)} \dim(X_i)^2$ is the \emph{global dimension} of the ribbon fusion category $\cC$ and $\chi(M)$ is the Euler characteristic of $M$. See~\cite[Sec 7]{dichromatic} for more details.} in terms of a handle decomposition of $M$ (as a special case amongst a more general family of `dichromatic' invariants).  If $M$ admits a Kirby diagram with a single $0$- and $4$-handle and which is free of $1$- and $3$-handles, the invariant can be computed as follows: Since $\cC$ is a ribbon category, we can evaluate any framed link $L$ with a labelling of each connected component $L_i$ of $L$ by an object $X_i$ of $\cC$ to a scalar $L(X_1,\ldots, X_n)$. To compute $\CY_{\cC}(M)$, we then sum up these scalars over a set of representing simple objects $\mathrm{Irr}(\cC)$ of $\cC$ using appropriate normalization factors:\begin{equation}\label{eq:CraneYetterformula}\CY_{\cC}(M) = \cD_{\cC} \sum_{X_1,\ldots, X_n \in \mathrm{Irr}(\cC)} \left( \prod_{i}\dim(X_i)\right)~L(X_1,\ldots, X_n).\end{equation}
Here, $\dim(X)$ denotes the \emph{quantum dimension} of the object $X$, defined as the evaluation of the $0$-framed unlink labelled by $X$, and $\cD_{\cC}:= \sum_{X \in \mathrm{Irr}(\cC)} \dim(X)^2$ is the \emph{global dimension} of $\cC$.
In the general case, formula~\eqref{eq:CraneYetterformula} has to be adapted slightly to the presence of $1$- and $3$-handles (see~\cite{dichromatic}). 

The algebra of local operators and the fusion algebra of $\CY_{\cC}$ can be understood in terms of the symmetric center of $\cC$: Recall that an object $x$ in a braided monoidal category $\cC$ is \emph{transparent} if it braids trivially with all other objects, that is if  $c_{y,x} \circ c_{x,y} = \id_{x\otimes y}$ for all objects $y$ of $\cC$ where $c_{x,y}:x\otimes y \to y \otimes x$ denotes the braiding natural isomorphism of $\cC$. The \emph{symmetric center} $Z_{\mathrm{sym}}(\cC)$ of $\cC$ is the full monoidal subcategory of $\cC$ on all transparent objects. In particular, $Z_{\mathrm{sym}}(\cC)$ is a symmetric monoidal category and is ribbon if $\cC$ is ribbon (see~\cite[Def 2.41]{dichromatic} for more details). The algebra of local operators $\CY_{\cC}(S^3)$ is the endomorphism-algebra $\Hom_{\Zsym(\cC)}(I, I) \iso k$ of the tensor unit $I$ of $Z_{\mathrm{sym}}(\cC)$ and the fusion algebra $\CY_{\cC}(S^2\times S^1)$ is the $k$-linearized Grothendieck ring\footnote{The \emph{Grothendieck ring} $K_0(\cC)$ of a monoidal semisimple category $\cC$ is as an abelian group freely generated by the isomorphism classes of simple objects of $\cC$ with ring structure induced from the monoidal structure of $\cC$.\looseness=-2} $K_0(\Zsym(\cC))\otimes_{\mathbb{Z}} k$ of $\Zsym(\cC)$ (see~\cite{WalkerNotes} for a proof sketch).

It is expected that the Crane-Yetter-Kauffman theory arises from a fully extended field theory with values in the $4$-category of braided tensor categories~\cite{BJS} and that it is in fact an oriented version of the fully extended framed field theory constructed via the cobordism hypothesis~\cite{baezdolan, lurie} from a braided fusion category in~\cite{BJS}. In particular, the $1$-category `of point particles' $\CY_{\cC}(S^2)$ with its symmetric monoidal structure inherited from embeddings of $3$-disks into $3$-disks is expected to be the symmetric center $\Zsym(\cC)$ of $\cC$.
And indeed, in any once-extended field theory $Z$ (with values in the symmetric monoidal bicategory $\tVect_k$ of additive and idempotent complete $k$-linear categories, see Section~\ref{sec:extended}) both algebras $Z(S^3)$ and $Z(S^2\times S^1)$ are completely determined by the $1$-category $Z(S^2)$ with its induced monoidal structure with monoidal unit $I$; $Z(S^3)$ is the endomorphism algebra $\Hom_{Z(S^2)}(I,I)$ (see e.g. the proof of Theorem~\ref{thm:extended}) while $Z(S^2\times S^1)$ is the $k$-linearized Grothendieck ring of the monoidal category $Z(S^2)$.
\end{example}

In the following, we say that an oriented topological field theory is \emph{zero} if it is zero on all non-empty closed $3$-manifolds and on all non-empty compact $4$-bordisms. 
\begin{prop}\label{prop:nonzero}Let $Z:\Bord_4\to \Vect_k$ be a non-zero oriented $4$-dimensional topological field theory. Then, both its algebra of local operators $Z(S^3)$ and its fusion algebra $Z(S^2\times S^1)$ are non-zero. 
\end{prop}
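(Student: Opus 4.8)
The plan is to prove the contrapositive of two implications: first, that $Z(S^3)=0$ forces $Z$ to be zero, and second, that $Z(S^2\times S^1)=0$ forces $Z(S^3)=0$. Chaining these, nonvanishing of $Z$ yields $Z(S^3)\neq 0$, which in turn yields $Z(S^2\times S^1)\neq 0$, which is exactly the two assertions of the proposition.

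For the first implication, the key observation is that every bordism factors through $S^3$ after excising a ball. Given any compact $4$-bordism $W:M\to N$, I would remove an open $4$-ball from its interior to obtain a bordism $\tilde W:M\to N\sqcup S^3$; capping the new $S^3$ boundary component with $D^4=\epsilon_3$ recovers $W$, so that
\[
Z(W) = (\id_{Z(N)}\otimes Z(\epsilon_3))\circ Z(\tilde W)
\]
factors through $Z(N)\otimes Z(S^3)$. If $Z(S^3)=0$ this target vanishes and $Z(W)=0$ for every bordism $W$. Applying this to the identity bordism $M\times I$ of an arbitrary closed $3$-manifold $M$ gives $\id_{Z(M)} = Z(M\times I) = 0$, hence $Z(M)=0$; thus $Z$ is zero in the sense defined above.

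For the second implication, I would exhibit $\id_{Z(S^3)}$ as factoring through $Z(S^2\times S^1)$. Attaching a single $4$-dimensional $1$-handle to $S^3\times\{1\}\subset S^3\times I$ produces a bordism $a:S^3\to S^2\times S^1$, since $0$-surgery on an embedded $S^0$ turns $S^3$ into $S^3\#(S^2\times S^1)=S^2\times S^1$. Attaching a geometrically cancelling $2$-handle then gives a bordism $b:S^2\times S^1\to S^3$, and by the handle-cancellation lemma the composite $b\circ a$ is diffeomorphic rel boundary to the trivial cylinder $S^3\times I$. Hence
\[
Z(b)\circ Z(a)=Z(b\circ a)=Z(S^3\times I)=\id_{Z(S^3)},
\]
which factors through $Z(S^2\times S^1)$; if the latter is the zero vector space, then $\id_{Z(S^3)}=0$ and so $Z(S^3)=0$.

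The routine inputs---that a compact $4$-bordism always has nonempty interior from which a ball may be excised, and that capping an $S^3$ with $D^4$ realizes the counit $\epsilon_3$---are immediate. The one step demanding genuine care is the handle-cancellation claim in the second implication: I must check that the $1$-/$2$-handle pair is geometrically cancelling, i.e.\ that the attaching circle of the $2$-handle meets the belt $2$-sphere of the $1$-handle transversally in a single point, so that the composite bordism is diffeomorphic to $S^3\times I$ \emph{relative to its boundary} rather than merely homotopy equivalent. This is standard Morse-theoretic handle cancellation, but it is the crux of the argument and the place where I would be most careful about orientations and framings.
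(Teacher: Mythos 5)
Your proof is correct, and its first half coincides with the paper's argument: the paper also excises a $4$-ball to show that if $Z(D^4)=0$ (in particular if $Z(S^3)=0$), then $Z$ vanishes on every non-empty bordism. Where you genuinely diverge is the second implication. The paper never invokes handle theory there: it simply excises and reglues an embedded $S^2\times D^2$ from the interior of $D^4$ (the tubular neighborhood of a standardly embedded $2$-sphere, which has trivial normal bundle), so that $Z(D^4)$ itself factors through $Z(S^2\times S^1)$ and hence vanishes, after which the first step applies. You instead factor $\id_{Z(S^3)}$ through $Z(S^2\times S^1)$ via a cancelling $1$-/$2$-handle pair $a\colon S^3\to S^2\times S^1$, $b\colon S^2\times S^1\to S^3$. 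Both routes work; the paper's is the more elementary (no cancellation lemma, no rel-boundary care), while yours buys the stronger structural fact that $Z(S^3)$ is a retract, hence a direct summand, of $Z(S^2\times S^1)$. Regarding the one point you flag as delicate: you can sidestep the rel-boundary issue entirely. Handle cancellation gives a diffeomorphism of $b\circ a$ with $S^3\times I$ fixing the incoming boundary, so $b\circ a$ is at worst the mapping cylinder of some orientation-preserving diffeomorphism of $S^3$; mapping cylinders are invertible morphisms in $\Bord_4$, so $Z(b\circ a)$ is invertible, and an invertible endomorphism factoring through the zero vector space already forces $Z(S^3)=0$. (Alternatively, $\pi_0\mathrm{Diff}^+(S^3)$ is trivial by Cerf's theorem, so $b\circ a$ is honestly the identity bordism.)
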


\begin{proof} 
First note that if $Z(D^4)$ is the zero linear map, it follows by excising and regluing an embedded $4$-disk from the interior of any non-empty $4$-dimensional compact oriented bordism $W$, that $Z(W)=0$. Thus, the topological field theory $Z$ is zero.
In particular, if $Z(S^3)$ is the zero vector space, $Z(D^4)$ is zero and hence $Z$ is zero. 
Similarly, if $Z(S^2\times S^1)$ is zero, by excising and regluing an embedded $S^2\times D^2$ from $D^4$, it again follows that $Z(D^4)=0$ and hence that $Z$ is zero. 
\end{proof}

The \emph{direct sum} $Z_1\oplus Z_2$ of two oriented topological field theories~\cite{durhuus} is defined to be the topological field theory which assigns the vector space $Z_1(M)\oplus Z_2(M)$ to any non-empty connected closed oriented $(n-1)$-manifold $M$ and the tensor product of these spaces to disconnected manifolds. Similarly, to a non-empty connected compact oriented $n$-bordism $W$ it assigns the direct sum of linear maps $Z_1(W)$ and $Z_2(W)$ (interpreted as a linear map between the appropriate tensor products of direct sums) and again extends to non-connected bordisms by taking tensor products.
In particular, the value of $Z_1\oplus Z_2$ on a non-empty connected closed oriented $n$-manifold is simply the sum of the values of $Z_1$ and $Z_2$. We say that a topological field theory is \emph{indecomposable} if it is not isomorphic to a direct sum of non-zero field theories.

Using the fact that for every non-empty connected closed oriented $(n-1)$-manifold $M$, the vector space $Z(M)$ carries a canonical action of the algebra $Z(S^{n-1})$, Sawin~\cite{Sawin} showed that direct sum decompositions of a topological field theory $Z:\Bord_n \to \Vect_k$ are controlled by its algebra of local operators $Z(S^{n-1})$. 

\begin{prop}[{\cite[Thm 1]{Sawin}}]\label{prop:Sawin} The algebra of local operators $Z(S^{n-1})$ is the direct sum of Frobenius algebras $A_1\oplus A_2$ if and only if $Z$ is the direct sum $Z_1\oplus Z_2$ of topological field theories $Z_1$ and $Z_2$ whose algebras of local operators are $A_1$ and $A_2$, respectively. 

In particular, $Z$ is indecomposable if and only if the algebra of local operators of $Z$ is indecomposable as a Frobenius algebra.
\end{prop}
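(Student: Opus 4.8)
The plan is to build the decomposition of $Z$ out of an idempotent in $A := Z(S^{n-1})$ together with the canonical action of $A$ on all of the spaces $Z(M)$. A decomposition $A = A_1 \oplus A_2$ as commutative Frobenius algebras amounts to a pair of orthogonal idempotents $e_1,e_2 \in A$ with $e_1 + e_2 = 1$, where $e_i$ is the unit of $A_i$; note that for a \emph{commutative} Frobenius algebra such an algebra idempotent automatically splits the counit and comultiplication, so the whole of the hypothesis is encoded in the choice of $e_1$. For every non-empty connected closed oriented $(n-1)$-manifold $M$, the canonical action $\rho_M : A \otimes Z(M) \to Z(M)$ lets each $e_i$ act as a linear endomorphism of $Z(M)$; associativity of $\rho_M$ together with $e_i^2 = e_i$ makes these endomorphisms idempotent, and unitality together with $e_1 + e_2 = 1$ makes them sum to the identity. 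This yields a splitting of vector spaces $Z(M) = Z_1(M) \oplus Z_2(M)$ with $Z_i(M) := e_i \cdot Z(M)$. Taking $M = S^{n-1}$, where $\rho_{S^{n-1}}$ is the multiplication of $A$ (the bordism ``cylinder on $S^{n-1}$ with an interior disk removed'' is the pair of pants), gives $Z_i(S^{n-1}) = e_i A = A_i$ as algebras.

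The heart of the argument — and the step I expect to be the main obstacle — is to show that these splittings are respected by all bordism maps, so that $Z_1$ and $Z_2$ assemble into honest field theories. Concretely, for a \emph{connected} compact oriented bordism $W : M \to N$ I would show that $Z(W)$ is a map of $A$-modules, i.e.\ $Z(W) \circ \rho_M = \rho_N \circ (\id \otimes Z(W))$. Both composites are computed by the single bordism obtained from $W$ by deleting a small open $n$-disk from its interior, read as a bordism $S^{n-1} \sqcup M \to N$; the left-hand side places the puncture near the incoming end and the right-hand side near the outgoing end. Since $W$ is connected, an ambient isotopy carries one puncture to the other through the interior, producing an orientation-preserving diffeomorphism of bordisms rel boundary, so $Z$ assigns the same linear map to the two sides. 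It is exactly here that connectedness of the bordism is indispensable. Granting the module-map property, $Z(W)(e_i \cdot x) = e_i \cdot Z(W)(x)$, so $Z(W)$ carries $Z_i(M)$ into $Z_i(N)$ and has no off-diagonal terms; applying the same puncture-sliding through each boundary component of a connected $W$ between disconnected manifolds further forces the mixed blocks (where distinct idempotents act through distinct components) to vanish, since $e_1 e_2 = 0$. This matches the tensor-product convention in the definition of $Z_1 \oplus Z_2$, and lets me define $Z_i(W) := Z(W)|_{Z_i(M)}$.

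It then remains to verify that each $Z_i$ is a symmetric monoidal functor $\Bord_n \to \Vect_k$ and to identify its algebra of local operators. Functoriality is inherited from $Z$, as $Z_i$ is the restriction of $Z$ to the invariant subspaces cut out by $e_i$; monoidality and the behaviour on disconnected manifolds reduce to distributing tensor products over direct sums and comparing with the monoidality of $Z$, which is routine once the vanishing of mixed blocks above is in hand, giving $Z \iso Z_1 \oplus Z_2$. The Frobenius structure on $Z_i(S^{n-1}) = A_i$ is obtained by restricting $Z(\Delta_{n-1})$ and $Z(\epsilon_{n-1})$, which land in $A_i \otimes A_i$ and $k$ respectively precisely because $A = A_1 \oplus A_2$ was assumed to be a decomposition of \emph{Frobenius} algebras; hence the algebra of local operators of $Z_i$ is $A_i$. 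Finally, the ``in particular'' clause follows formally: a Frobenius decomposition with both summands non-zero produces field theories $Z_i$ with $Z_i(S^{n-1}) = A_i \neq 0$, hence non-zero; conversely a splitting $Z \iso Z_1 \oplus Z_2$ into non-zero theories restricts on $S^{n-1}$ to a Frobenius decomposition $A \iso A_1 \oplus A_2$ whose summands are non-zero, since a non-zero field theory has non-zero algebra of local operators by Proposition~\ref{prop:nonzero}.
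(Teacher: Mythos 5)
The paper does not prove this proposition itself but cites Sawin's Theorem 1, whose proof is precisely the argument you give: orthogonal idempotents $e_1,e_2\in Z(S^{n-1})$ acting on each $Z(M)$ via a punctured cylinder, with connectedness of a bordism used to isotope the puncture between boundary regions (hence the module-map property and the vanishing of mixed blocks) — so your proposal is correct and takes essentially the same route as the cited source. One definitional slip worth fixing in a careful write-up: for a connected bordism with empty incoming boundary, such as $D^n:\emptyset\to S^{n-1}$, the literal restriction $Z(W)|_{Z_i(\emptyset)}$ is not the right definition (indeed $Z(D^n)(1)=e_1+e_2\notin A_i$); one must take the $i$-th block, $Z_i(W):=(e_i\cdot{})\circ Z(W)\circ \iota_i$, which is what your block-matrix language implicitly does and which makes the unit of $Z_i$ equal $e_i$ rather than $1$.
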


\subsection{Semisimple topological field theories}
\label{sec:semisimple}

\begin{definition}\label{def:semisimple}
An oriented $4$-dimensional topological field theory $Z:\Bord_4 \to \Vect_k$ is \emph{semisimple} if both its algebra of local operators $Z(S^3)$ and its fusion algebra $Z(S^2\times S^1)$ are semisimple.
\end{definition}

\begin{example}\label{exm:invertible} 
Due to their direct amenability to techniques from algebraic topology, the arguably best-understood class of topological field theories are the \emph{invertible field theories}~\cite{FreedShort, SchommerPriesInv}. In our $1$-categorical setting, invertibility of a topological field theory $Z:\Bord_4\to \Vect_k$ amounts to the requirement that all vector spaces $Z(M^3)$ are one-dimensional, and all linear maps $Z(W^4):Z(M^3)\to Z(N^3)$ are invertible. Since any $k$-algebra on a one-dimensional vector space is trivial, every oriented invertible $4$-dimensional topological field theory is automatically semisimple.
\end{example}

Using Proposition~\ref{prop:Sawin}, we observe that every semisimple topological field theory decomposes into a finite direct sum of semisimple field theories with $Z(S^3)\iso k$. 

\begin{prop}\label{prop:decomposess} Every semisimple oriented topological field theory admits a decomposition into a finite direct sum of indecomposable semisimple field theories. A semisimple oriented topological field theory $Z$ is indecomposable if and only if $Z(S^3) \iso k$.
\end{prop}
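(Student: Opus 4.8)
The plan is to reduce everything to Sawin's theorem (Proposition~\ref{prop:Sawin}) together with the classification of finite-dimensional semisimple commutative algebras over the algebraically closed field $k$. Assume throughout that $Z$ is non-zero (so that $Z(S^3)\neq 0$ by Proposition~\ref{prop:nonzero}; the statement for the zero theory is vacuous). Since $Z$ is semisimple, its algebra of local operators $Z(S^3)$ is a finite-dimensional semisimple commutative algebra over $k$, hence a finite product of fields, each a finite extension of $k$; as $k$ is algebraically closed, each factor is $k$ itself and so $Z(S^3)\iso k^{n}$ for some $n\geq 1$. The orthogonal idempotents $e_1,\dots,e_n$ of this product split the Frobenius pairing into diagonal blocks --- because $\epsilon(e_i e_j\,ab)=\delta_{ij}\,\epsilon(e_i ab)$ --- so that $Z(S^3)$ is a direct sum $A_1\oplus\cdots\oplus A_n$ of Frobenius algebras with each $A_i\iso k$ one-dimensional and therefore indecomposable as a Frobenius algebra.

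Feeding this decomposition into Proposition~\ref{prop:Sawin} one factor at a time (a process that terminates since $\dim_k Z(S^3)=n<\infty$), I obtain $Z\iso Z_1\oplus\cdots\oplus Z_n$ with $Z_i(S^3)\iso A_i\iso k$. The final clause of Proposition~\ref{prop:Sawin} then shows each $Z_i$ is indecomposable, its algebra of local operators $k$ being indecomposable as a Frobenius algebra.

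It remains to check that each summand $Z_i$ is itself semisimple, i.e.\ that its fusion algebra $Z_i(S^2\times S^1)$ is semisimple; this is the step I expect to require the most care. Here I would invoke the definition of the direct sum of topological field theories: since $S^2\times S^1$ is connected and the pair-of-pants bordisms $m_2\times S^1,\Delta_2\times S^1,\dots$ are connected, the direct sum $\bigoplus_i Z_i$ assigns to each of them the block map that is $Z_i$ on the diagonal summands and zero on the cross terms of the tensor products. This is exactly the product-algebra structure, yielding an identification of Frobenius algebras $Z(S^2\times S^1)\iso\bigoplus_i Z_i(S^2\times S^1)$. Since a direct factor of a semisimple algebra is again semisimple, the semisimplicity of $Z(S^2\times S^1)$ descends to every $Z_i(S^2\times S^1)$, completing the proof of the first assertion.

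Finally, for the `if and only if': if $Z$ is indecomposable then Proposition~\ref{prop:Sawin} makes $Z(S^3)$ an indecomposable Frobenius algebra, and by the structure result above this forces $n=1$, i.e.\ $Z(S^3)\iso k$; conversely a one-dimensional $Z(S^3)\iso k$ is automatically indecomposable as a Frobenius algebra, so Proposition~\ref{prop:Sawin} gives that $Z$ is indecomposable. The only genuinely delicate point in the whole argument is recognizing the induced decomposition of the fusion algebra so that semisimplicity is inherited by the summands; the remainder is a direct application of Sawin's theorem and of the classification of semisimple commutative algebras over $k$.
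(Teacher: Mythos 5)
Your proof is correct and follows essentially the same route as the paper's: Artin--Wedderburn to write $Z(S^3)\iso k^n$, Sawin's theorem (Proposition~\ref{prop:Sawin}) to split $Z$ accordingly, and the induced decomposition $Z(S^2\times S^1)\iso\bigoplus_i Z_i(S^2\times S^1)$ to see that semisimplicity of the fusion algebra descends to each summand. Your extra care --- handling the zero theory, checking that the idempotents split the Frobenius pairing into blocks, and iterating the two-factor statement of Sawin's theorem --- only fills in details the paper leaves implicit.
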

\begin{proof}
By Artin-Wedderburn, every finite-dimensional semisimple commutative algebra over an algebraically closed field $k$ is a finite direct sum $\oplus_i k$ of copies of the trivial algebra $k$. It therefore follows from Proposition~\ref{prop:Sawin} that $Z$ is indecomposable if and only if $Z(S^3) \iso k$. 

 Suppose that $Z= \bigoplus_i Z_i$ is a semisimple topological field theory, where $Z_i$ are indecomposable topological field theories. We then claim that each component $Z_i$ is itself semisimple. Of course, $Z_i(S^3) \iso k$ is semisimple. By the definition of the direct sum of topological field theories, it follows that the algebra $Z(S^2\times S^1)$ is a direct sum of the algebras $Z_i(S^2\times S^1)$. The claim then follows since every component in a direct sum decomposition of a semisimple algebra is again semisimple. 
 \end{proof}

\subsection{Unitary topological field theories are semisimple}\label{sec:unitary}
In this section, we work over the field $k=\mathbb{C}$ and prove that every unitary topological field theory is semisimple.

For a bordism $W:M \to N$, we let $\overline{W}:N\to M$ denote the bordism with opposite orientation (and hence source and target interchanged). A \emph{unitary topological field theory} is a symmetric monoidal functor $\Bord_n\to \Hilb$ into the symmetric monoidal category of Hilbert spaces and linear maps such that $Z(\overline{W}) = Z(W)^\dagger: Z(N) \to Z(M)$ is the adjoint linear map of $Z(W):Z(M)\to Z(N)$.
In other words, both symmetric monoidal categories $\Bord_n$ and $\Hilb$ admit a \emph{dagger structure} (also known as $*$-structure) and a unitary topological field theory is required to preserve that structure.

\begin{theorem}\label{thm:unitaryss} Any unitary topological field theory is semisimple.
\end{theorem}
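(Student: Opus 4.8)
The plan is to verify Definition~\ref{def:semisimple} directly: I must show that the two commutative Frobenius algebras $A := Z(S^3)$ and $B := Z(S^2\times S^1)$ are semisimple. Both are finite-dimensional (every object of $\Bord_4$ is dualizable, so its image is a finite-dimensional Hilbert space), so I would treat them by a single uniform argument. The crucial input from unitarity is that orientation reversal implements the dagger, $Z(\overline{W}) = Z(W)^\dagger$. Since the comultiplication and counit bordisms $\Delta_{n-1}$, $\epsilon_{n-1}$ are precisely the orientation-reverses of the multiplication and unit bordisms $m_{n-1}$, $u_{n-1}$ (the pair of pants read backwards, and the disk with reversed orientation), I would first record the key identities
\[ Z(\Delta_3) = Z(m_3)^\dagger, \qquad Z(\epsilon_3) = Z(u_3)^\dagger, \]
together with their analogues $Z(\Delta_2\times S^1) = Z(m_2\times S^1)^\dagger$ and $Z(\epsilon_2\times S^1) = Z(u_2\times S^1)^\dagger$ for $B$. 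In other words, each of $A$ and $B$ is a commutative Frobenius algebra in $\Hilb$ whose comultiplication is the adjoint of its multiplication and whose counit is the adjoint of its unit.

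Then comes the algebraic core. Write $m,\Delta,u,\epsilon$ for the structure maps of $A$ (the case of $B$ being identical), so that $\Delta = m^\dagger$ and $\epsilon = u^\dagger$. The Frobenius relation makes $\Delta$ an $A$-bimodule map, whence the handle (window) operator $m\circ\Delta : A\to A$ equals left multiplication $L_h$ by the handle element $h := m(\Delta(1_A))$. Since $\Delta = m^\dagger$, this operator is $m\,m^\dagger$, which is positive semidefinite. Moreover the counit axiom $(\epsilon\otimes\id)\circ\Delta = \id$ exhibits $\Delta$ as a split monomorphism, so $\Delta = m^\dagger$ is injective; hence $\ker(m\,m^\dagger) = \ker(m^\dagger) = \ker\Delta = 0$, and $L_h = m\,m^\dagger$ is invertible. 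Because $A$ is commutative and unital, invertibility of $L_h$ forces $h$ to be an invertible element of $A$. By the standard characterization of semisimple commutative Frobenius algebras in terms of invertibility of the handle element (Proposition~\ref{prop:window}), $A$ is semisimple; applying the same reasoning to $B$ completes the proof.

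The step I expect to require the most care is the geometric identification underlying the first paragraph --- that orientation reversal carries the multiplication pair-of-pants to the comultiplication pair-of-pants (and the unit disk to the counit disk), so that the dagger hypothesis really does yield $\Delta = m^\dagger$ and $\epsilon = u^\dagger$. This is visually evident but deserves careful orientation bookkeeping, especially for the fusion algebra $B$, where one must check that $\overline{m_2\times S^1}$ is orientation-preserving diffeomorphic, as a bordism, to $\Delta_2\times S^1$ --- i.e.\ that reversing the orientation of the product bordism can be absorbed into reversing the $S^2$ pair-of-pants factor alone. Once these identifications are in hand, the remainder of the argument is purely formal, relying only on the finite-dimensional spectral fact $\ker(T^\dagger T) = \ker T$ together with the cited window-element criterion.
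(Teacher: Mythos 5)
Your proof is correct, and it shares its geometric first step with the paper's: both arguments identify $Z(S^3)$ and $Z(S^2\times S^1)$ as commutative $\dagger$-Frobenius algebras in $\Hilb$, i.e.\ commutative Frobenius algebras with $\Delta = m^\dagger$ (your extra identity $\epsilon = u^\dagger$ is true but never actually used, since the counit axiom you invoke is part of the Frobenius structure itself). The difference is in the algebraic core. The paper disposes of it by citation: the underlying algebra of every $\dagger$-Frobenius algebra in $\Hilb$ admits the structure of a finite-dimensional $C^*$-algebra \cite[Cor 4.3]{ONB} and is therefore semisimple. You instead verify the window criterion of Proposition~\ref{prop:window} directly: $m\circ\Delta = m m^\dagger$ is positive semidefinite, the counit axiom makes $\Delta = m^\dagger$ a split monomorphism, hence $\ker(m m^\dagger) = \ker(m^\dagger) = 0$, and injectivity gives invertibility in finite dimensions. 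This is a genuinely different, self-contained route: it needs only the elementary fact $\ker(TT^\dagger)=\ker(T^\dagger)$ plus a proposition the paper has already proved and uses elsewhere, and it avoids importing the $C^*$-algebra structure theorem; what the paper's route buys is brevity and a conceptual placement of unitary theories within operator algebra. Two small remarks: your detour through the handle element $h$ and its invertibility as an algebra element is unnecessary, since Proposition~\ref{prop:window} only asks that the endomorphism $m\circ\Delta$ be invertible; and the orientation bookkeeping you flag for the fusion algebra is immediate, because reversing the orientation of a product $W\times S^1$ is the same as reversing the orientation of the factor $W$, so $\overline{m_2\times S^1} = \overline{m_2}\times S^1 = \Delta_2\times S^1$ on the nose.
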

\begin{proof} For a unitary topological field theory, $Z(S^3)$ and $Z(S^2\times S^1)$ are commutative $\dagger$-Frobenius algebras in $\Hilb$, that is commutative Frobenius algebras such that the comultiplication $\Delta$ is the adjoint linear map of the multiplication: $\Delta= m^\dagger$. The theorem then follows from the fact that the underlying finite-dimensional $\mathbb{C}$-algebra of every $\dagger$-Frobenius algebra in $\Hilb$ admits the structure of a finite-dimensional $C^*$-algebra~\cite[Cor 4.3]{ONB} and is therefore semisimple.
\end{proof}

\subsection{Once-extended $k$-linear topological field theories are semisimple}\label{sec:extended}

Most existing topological field theories --- and in particular the ones motivated by physics --- are either proven or believed to be \emph{extended}, meaning that they also assign algebraic data to manifolds of higher codimension and allow gluing not only along boundaries but also along higher codimensional corners. Here, we show that any topological field theory that is `once-extended' in the sense that it also assigns $k$-linear categories to closed oriented $2$-manifolds is automatically semisimple. 

We follow~\cite{SchommerPries} and let $\Bord_{4,3,2}$ denote the symmetric monoidal bicategory of \emph{once-extended} oriented bordism. Roughly speaking, its objects are closed oriented $2$-manifolds, its $1$-morphisms are compact oriented $3$-dimensional bordism and its $2$-morphisms are diffeomorphism classes of compact oriented $4$-dimensional bordisms with corners. We refer to~\cite{SchommerPries} for a precise definition of this symmetric monoidal bicategory. 

In the following, a \emph{once-extended $k$-linear oriented $4$-dimensional topological field theory} is a symmetric monoidal $2$-functor $\Bord_{4,3,2}\to \tVect_k$, where $\tVect_k$ is the symmetric monoidal bicategory of additive and idempotent complete $k$-linear categories, $k$-linear functors and natural transformations. 
In fact, there are many possible symmetric monoidal bicategories $T$ which serve as a potential target `extending' $\Vect_k$ (in the sense that $\Hom_{T}(I_{T},I_{T}) \iso \Vect_k$). In~\cite[App A]{III} various other natural candidates for such bicategories of `2-vector spaces' are discussed, including the bicategory of $k$-algebras, $k$-bimodules and bimodule maps. By restricting to closed oriented $3$-manifolds and bordisms between them, any once extended $k$-linear topological field theory $Z: \Bord_{4,3,2} \to T$ induces an ordinary $4$-dimensional topological field theory $\Omega Z: \Bord_{4} \to \Vect_k$ in the sense of Section~\ref{sec:background}.

\begin{theorem}\label{thm:extended} Let $Z:\Bord_{4,3,2} \to T$ be a once extended $k$-linear oriented $4$-dimensional topological field theory, where $T$ is $\tVect_k$ or any of the other symmetric monoidal bicategories of~\cite[App A]{III}. Then, $\Omega Z:\Bord_4\to \Vect_k$ is semisimple. 
\end{theorem}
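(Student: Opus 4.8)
The plan is to reduce the semisimplicity of $\Omega Z$ to a statement about the once-extended theory $Z$ evaluated on the circle-related data, and then exploit the fact that the target bicategory $T$ has $\Hom_T(I_T,I_T)\iso\Vect_k$ to pin everything down in terms of the $k$-linear category $Z(S^2)$. By Definition~\ref{def:semisimple}, I must show that both the algebra of local operators $\Omega Z(S^3)$ and the fusion algebra $\Omega Z(S^2\times S^1)$ are semisimple. The key structural observation, already flagged in Example~\ref{exm:CYone}, is that both of these algebras are controlled by the monoidal $k$-linear category $\cC:=Z(S^2)$, where the monoidal structure comes from the pair-of-pants $3$-bordism $S^2\sqcup S^2\to S^2$ (the codimension-one analogue of fusion). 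So the first step is to set up this identification rigorously.

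First I would identify $\Omega Z(S^3)$ as an endomorphism algebra. Since $S^3$ bounds the $3$-disk viewed as a bordism $\emptyset\to S^2$ capping off, and since in $\Bord_{4,3,2}$ the object $S^2$ is sent to the category $\cC$ with monoidal unit $I_\cC$ (the image of the disk), the value $\Omega Z(S^3)$ is computed as the space of $2$-morphisms filling in the appropriate picture. Concretely, $S^3=\partial D^4$ is the $4$-bordism obtained by capping the cylinder on $I_\cC:\emptyset\to\cC$, so $\Omega Z(S^3)\iso \Hom_\cC(I_\cC,I_\cC)=\End_\cC(I_\cC)$. Because $T$ is $\tVect_k$ or one of the bestiary bicategories, $\cC$ is additive and idempotent complete, and the endomorphism algebra of the monoidal unit of such a category is a finite-dimensional commutative $k$-algebra; one then needs to argue it is semisimple. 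The clean way is to note that $\End_\cC(I_\cC)$ is the algebra of local operators of the $2$-dimensional field theory $Z(-\times S^2)$ obtained by crossing with $S^2$, so it inherits a commutative Frobenius algebra structure, and then to use that the Frobenius pairing forces semisimplicity — but more directly, since $\Omega Z$ is indecomposable-decomposable via Proposition~\ref{prop:Sawin}, the real content is separability, which I would extract from the categorical structure.

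Second, and this is where the main work lies, I would identify the fusion algebra $\Omega Z(S^2\times S^1)$ with the $k$-linearized Grothendieck ring $K_0(\cC)\otimes_\ZZ k$, or more precisely with the trace (Hochschild-homology-type) decategorification of the monoidal category $\cC$. The manifold $S^2\times S^1$ arises as the $1$-morphism trace of the identity on $S^2$, i.e.\ the circle-compactification, so $\Omega Z(S^2\times S^1)$ should be the dimension/trace of $\cC$, which for a semisimple-enough $\cC$ is spanned by traces of simple objects and carries the convolution product from the monoidal structure. The semisimplicity of this algebra then follows because the fusion algebra is a commutative Frobenius algebra (coming from the Frobenius structure of $S^2\times S^1$ as $(m_2\times S^1,\dots)$) whose nondegenerate trace pairing separates the basis of isomorphism classes of simple objects; over an algebraically closed field a commutative Frobenius algebra with a nondegenerate pairing that is ``diagonalizable'' on a basis of idempotents is a product of copies of $k$, hence semisimple.

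The hard part will be making the identifications $\Omega Z(S^3)\iso\End_\cC(I_\cC)$ and $\Omega Z(S^2\times S^1)\iso K_0(\cC)\otimes_\ZZ k$ precise and uniform across \emph{all} the target bicategories $T$ of~\cite[App A]{III}, rather than just $\tVect_k$; each bicategory has a slightly different notion of ``idempotent complete additive $k$-linear category'' or ``algebra/bimodule,'' and one must check the decategorification argument degrades gracefully in each case. I expect the uniformity to be handled by observing that all these bicategories share the property $\Hom_T(I_T,I_T)\iso\Vect_k$ together with enough finiteness (finite-dimensional hom-spaces, existence of a trace), so that the relevant commutative Frobenius algebras are finite-dimensional with nondegenerate trace pairing. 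Once that is in place, semisimplicity of a finite-dimensional commutative Frobenius $k$-algebra over an algebraically closed $k$ reduces to nondegeneracy of its trace form, which the Frobenius structure supplies directly; I would therefore finish by invoking the standard fact (used again later via Proposition~\ref{prop:window}) that a commutative Frobenius algebra is semisimple precisely when its pairing is nondegenerate in the required separable sense, and verifying this holds because the pairing comes from a genuine categorical trace.
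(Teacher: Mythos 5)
Your first step (identifying $\Omega Z(S^3)$ with $\End_{Z(S^2)}(I)$) is the right move and matches the paper, which makes it precise via the adjunction between $D^3:\emptyset\to S^2$ and $D^3:S^2\to\emptyset$ coming from cancelling $0$- and $1$-handles. But your proposal is missing the idea that actually powers the paper's proof: nothing you say explains why the category $\cC=Z(S^2)$ is \emph{semisimple}, and without that, $\End_\cC(I)$ need not be a semisimple algebra. Additivity and idempotent completeness are not enough: if $\cC$ is the category of finitely generated projective modules over $A=k[x]/(x^2)$ with monoidal structure $\otimes_A$, then $\cC$ is a perfectly good object of $\tVect_k$, yet $\End_\cC(I)\iso A$ is not semisimple. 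The paper's opening step supplies exactly what is missing: by~\cite[Thm A.22]{III} (building on Tillmann's observation), the fully dualizable sub-bicategory of \emph{every} target $T$ in the bestiary is equivalent to the bicategory $\tVect_k^{\mathrm{f.d.}}$ of finite \emph{semisimple} $k$-linear categories, and since every object of $\Bord_{4,3,2}$ is dualizable, $Z$ factors through $\tVect_k^{\mathrm{f.d.}}$. This factorization is also what handles the uniformity across all targets $T$, which you hoped to get from $\Hom_T(I_T,I_T)\iso\Vect_k$ plus finiteness; that property alone does not suffice.

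The fallback arguments you propose in place of this step are not just incomplete but false. You assert repeatedly that the Frobenius structure, i.e.\ a nondegenerate trace pairing, forces semisimplicity of a commutative Frobenius algebra. Every Frobenius algebra has a nondegenerate pairing by definition, yet $k[x]/(x^2)$, with counit picking out the coefficient of $x$, is a commutative Frobenius algebra that is not semisimple. Were your claim correct, the algebras $Z(S^3)$ and $Z(S^2\times S^1)$ of \emph{every} oriented $4$-dimensional field theory would be semisimple (they always carry commutative Frobenius structures), rendering Definition~\ref{def:semisimple} vacuous and the paper's main theorems applicable to all field theories, which is certainly not what the paper claims. The correct criterion is Proposition~\ref{prop:window}: invertibility of the window endomorphism $m\circ\Delta$, a strictly stronger condition than nondegeneracy of the pairing, and one that does not come for free. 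Finally, for the fusion algebra you do not need the delicate identification of $\Omega Z(S^2\times S^1)$ with $K_0(Z(S^2))\otimes_{\ZZ}k$ (which itself would require a separate semisimplicity argument for the linearized Grothendieck ring): the paper instead applies the same adjunction argument to the dimensionally reduced theory $Z(-\times S^1):\Bord_{3,2,1}\to\tVect_k$, exhibiting $\Omega Z(S^2\times S^1)$ as $\Hom_{Z(S^1\times S^1)}(I,I)$ for an object $I$ of the finite semisimple category $Z(S^1\times S^1)$, whence semisimplicity follows just as for $Z(S^3)$.
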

\begin{proof} 
Based on an observation of Tillmann~\cite{tillmann}, it is shown in~\cite[Thm A.22]{III} that the symmetric monoidal bicategory $\tVect_k^{\mathrm{f.d.}}$ of finite semisimple $k$-linear categories, $k$-linear functors and natural transformations is equivalent to the fully dualizable subcategory of any of the bicategories $T$. Therefore, any topological field theory $\Bord_{4,3,2} \to T$ factors through $\tVect_k^{\mathrm{f.d.}}$ and we may henceforth assume that $T=\tVect_k^{\mathrm{f.d.}}$.

The cancellation of $0$- and $1$-handles gives rise to an adjunction\footnote{Recall that a pair of $1$-morphisms $f:a\to b$ and $g:b\to a$ in a bicategory form an \emph{adjunction} if there are $2$-morphisms $\eta: \id_a \To g \circ f$ and $\epsilon: f \circ g \To \id_b$, called the unit and counit of the adjunction, such that (omitting coherence isomorphisms for better readability) $(\epsilon \circ \id_{f}) \cdot (\id_{f} \circ \eta) = \id_{f}$ and $(\id_g \circ \epsilon) \cdot (\eta \circ \id_g) = \id_g$.  A $1$-morphism $f:a\to b$ which is part of an adjunction as above is said to be a \emph{left adjoint}. Given a left adjoint $1$-morphism $f:a\to b$, its \emph{right adjoint} $g:b\to a$ and the unit $\eta$ and counit $\epsilon$ of the adjunction are uniquely determined up to isomorphism. } between the $1$-morphisms $D^3: \emptyset \to S^2$ and $D^3: S^2 \to \emptyset$ in the bordism bicategory $\Bord_{4,3,2}$ with the following unit and counit:
\[\nonumber
\begin{tikzcd}
& S^2\arrow[dr, bend left =20,"D^3" ]&\\
\emptyset\arrow[rr, "\emptyset"'] \arrow[ur, bend left =20, "D^3"] &\arrow[u,Rightarrow, "D^4"']& \emptyset
\end{tikzcd}
\hspace{2cm}
\begin{tikzcd}
S^2 \arrow[rr, "D^1 \times S^2"]\arrow[dr, bend right=20, "D^3"']&\arrow[d, Leftarrow, "D^1\times D^3"]&S^2\\
& \emptyset \arrow[ur, bend right=20, "D^3"'] &
\end{tikzcd}
\]

Omitting coherence isomorphisms from the notation, any adjunction $(f:a \to b,g:b\to a, \eta:\id_a\To g\circ f, \epsilon:f\circ g \To \id_b)$ in a bicategory $\cB$ gives rise to an algebra object $\left(g\circ f, g\circ f\circ g \circ f \To[g\circ \epsilon \circ f] g\circ f, \id_a\To[\eta] g\circ f\right)$ in the monoidal category $\Hom_{\cB}(a,a)$. Since the right adjoint $g$, the unit $\eta$ and the counit $\epsilon$ are uniquely determined up to isomorphism by $f$, it follows that this algebra is also uniquely determined by $f$ up to algebra isomorphism.

Decomposing $S^3$ as $\emptyset\to[D^3] S^2 \to[D^3] \emptyset$ and observing that the `pair of pants bordism' $m_3: S^3 \sqcup S^3 \to S^3$ of Section~\ref{sec:background} corresponds to a $1$-handle attachment, it follows that the algebra structure $(S^3, m_3, u_3)$ of Section~\ref{sec:background} indeed arises in this way from the adjunction between $D^3:\emptyset \to S^2$ and $D^3: S^2\to \emptyset$. 

Up to isomorphism, the algebra $\left(Z(S^3), Z(m_3), Z(u_3)\right)$ is therefore uniquely determined by the linear functor $Z(D^3): Z(\emptyset) \to Z(S^2)$. Since the monoidal unit of $\tVect_k^{\mathrm{f.d.}}$ is the finite-semisimple category $\Vect_k^{\mathrm{f.d.}}$ of finite-dimensional vector spaces, and since every linear functor from $\Vect_k^{\mathrm{f.d.}}$ into a finite semisimple category is completely determined by where it sends the one-dimensional vector space $k$, we will henceforth tacitcly identify the category of linear functors $\Vect_k^{\mathrm{f.d.}}\to \cC$ with $\cC$ itself. It follows that $Z(D^3):Z(\emptyset)\iso \Vect_k^{\mathrm{f.d.}} \to Z(S^2)$ singles out an object\footnote{Of course, this object $I$ is the tensor unit of the monoidal structure on $\cC$ induced from embeddings of $3$-disks. Even though we will henceforth denote this object by $I$, this observation is not necessary for our proof of Theorem~\ref{thm:extended}.} $I$ of the category $Z(S^2)$. A right adjoint of this functor is the functor $\Hom_{Z(S^2)}(I, -): Z(S^2) \to \Vect_k^{\mathrm{f.d.}}$ and the resulting algebra structure on the composite $\Hom_{Z(S^2)}(I,I)$ (understood as an object of $\Vect_k^{\mathrm{f.d.}}$) is simply the usual algebra structure induced by composition in the category $Z(S^2)$. Since $Z(S^2)$ is a semisimple category, it follows that the endomorphism algebra $\Hom_{Z(S^2)}(I,I)$ is semisimple.

Applying the same argument to the `dimensionally reduced' topological field theory $Z(-\times S^1): \Bord_{3,2,1} \to \tVect$ implies that $Z(S^2\times S^1)$ is the endomorphism algebra $\Hom_{Z(S^1\times S^1)}(I,I)$ of some object $I$ in the semisimple category $Z(S^1\times S^1)$ and is therefore also semisimple. 
\end{proof}

\begin{example} As discussed in Example~\ref{exm:CYone}, the Crane-Yetter-Kauffman theory is expected to be fully extended taking values in the $4$-category of braided tensor categories~\cite{BJS, WalkerNotes}. In particular, Theorem~\ref{thm:extended} should imply that $\CY_{\cC}$ is semisimple. Alternatively, one can directly show that the algebra of local operators $\Hom_{\Zsym(\cC)}(I,I) \iso k$ and the fusion algebra $K_0(\Zsym(\cC))\otimes_{\mathbb{Z}} k$ are semisimple\footnote{In~\cite[Cor 3.7.7]{EGNO}, it is shown that for any fusion ring $A$~\cite[Def 3.1.7]{EGNO}, the algebra $A_{\mathbb{C}}:=A\otimes_{\mathbb{Z}}\mathbb{C}$ is semisimple. To extend this to arbitrary fields of characteristic zero, note that any fusion ring $A$ admits a canonical Frobenius algebra structure in the category of free abelian groups (using the notation of~\cite[Sec 3.1]{EGNO}, comultiplication and counit are given by $\Delta(b_i):= \sum_j b_j\otimes b_j^* b_i $ and $\tau(1) := 1$, $\tau(b_i)=0$ for $b_i \neq 1$). In particular, by Proposition~\ref{prop:window}, $A_k=A\otimes_{\mathbb{Z}}k$ is semisimple if and only if the endomorphism (of free abelian groups) $f:=m\circ \Delta:A \to A$ is invertible over $k$. But since $f$ is invertible over $\mathbb{C}$ and since $k$ is of characteristic zero (as assumed for the Crane-Yetter theory), it follows that $f$ is invertible over $k$ and hence that $A_k$ is semisimple.}~\cite[Cor 3.7.7]{EGNO}.\looseness=-2
\end{example}

\section{Semisimple topological field theories are $S^2\times S^2$-stable}\label{sec:S2S2stability}
Let $X$ be a connected, closed, oriented $n$-manifold. Two connected compact oriented $n$-bordisms $M, N:A\to B$ are \emph{$X$-stably diffeomorphic} if there are natural numbers $k_{+}, k_{-} \geq 0$ and an orientation-preserving diffeomorphism of oriented bordisms 
\[M \#^{k_+} X \#^{k_-} \overline{X} \iso N \#^{k_+}X \#^{k_-}\overline{X}.
\]
Here, and throughout this paper, connected sums are taken in the interior of bordisms.

\begin{definition} An $n$-dimensional oriented topological field theory is \emph{$X$-stable} if $Z(M) = Z(N)$ for $X$-stably diffeomorphic connected compact oriented $n$-bordisms $M$ and $N$.
\end{definition}

In this paper, we will be concerned with $X=S^2\times S^2$ and $X=\CPt$ (note that $S^2\times S^2$ admits an orientation reversing diffeomorphism, whereas $\CPt$ does not).

Most results in this paper ultimately follow from the following well-known and straight-forward observation about field theories with trivial algebra of local operators. 
\begin{prop}\label{prop:multiplicativityconnectsum} Let $Z$ be an oriented $n$-dimensional topological field theory with $Z(S^{n-1}) \iso k$. Then, $Z(S^n)$ is invertible and $Z$ is multiplicative under connected sums: For a connected closed oriented $n$-manifold $M$ and a connected oriented $n$-bordism $N:A\to B$, the following holds, where the connected sum is taken in the interior of $N$:
\[ Z(M\# N ) = Z(S^n)^{-1} Z(M) Z(N)
\]
\end{prop}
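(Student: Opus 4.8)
The plan is to exploit that the connecting neck of a connected sum is an $(n-1)$-sphere whose state space $Z(S^{n-1})\iso k$ is one-dimensional; every linear map assembled by gluing along such a neck therefore factors through $k$ and collapses to a product of scalars. Recall from Section~\ref{sec:background} that $S^{n-1}$ is a commutative Frobenius algebra object in $\Bord_n$, with unit $u_{n-1}:\emptyset\to S^{n-1}$ and counit $\epsilon_{n-1}:S^{n-1}\to\emptyset$ both given by the disk $D^n$. Applying the symmetric monoidal functor $Z$ turns this into a commutative Frobenius algebra structure on $Z(S^{n-1})\iso k$, and the whole proof amounts to reading off the relevant scalars from it, using that $\Hom_k(k,k)\iso k$ so that $u:=Z(u_{n-1})$ and $\epsilon:=Z(\epsilon_{n-1})$ are literally elements of $k$.

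For the first claim, I would first note that $S^n$ is the double of $D^n$, i.e. the composite bordism $\emptyset\to[u_{n-1}]S^{n-1}\to[\epsilon_{n-1}]\emptyset$, so that $Z(S^n)=\epsilon\,u$ by functoriality. It then suffices to show that both scalars are nonzero. Since $Z(S^{n-1})\iso k$ is a unital algebra on a one-dimensional vector space its unit element cannot vanish, which forces $u\neq 0$; and since the Frobenius pairing $\epsilon\circ Z(m_{n-1})$ must be nondegenerate on the one-dimensional space $Z(S^{n-1})$, we also get $\epsilon\neq 0$. Hence $Z(S^n)=\epsilon\,u$ is invertible.

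For multiplicativity I would realize the connected sum as a gluing along the Frobenius neck. Write $\hat M:=M\setminus\mathring{D}^n$, viewed as a bordism $\emptyset\to S^{n-1}$, and $N^\circ$ for $N$ with an open ball removed from its interior, viewed as a bordism $S^{n-1}\sqcup A\to B$ with the new sphere as an incoming boundary. The three decompositions I need are
\[ M=\epsilon_{n-1}\circ\hat M,\qquad N=N^\circ\circ(u_{n-1}\sqcup\id_A),\qquad M\# N=N^\circ\circ(\hat M\sqcup\id_A). \]
Applying $Z$ and writing $z_M:=Z(\hat M)\in k$ for the resulting scalar, the first identity gives $Z(M)=\epsilon\,z_M$, hence $z_M=\epsilon^{-1}Z(M)$. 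As $Z(\hat M)$ and $Z(u_{n-1})$ are both scalars into the one-dimensional space $Z(S^{n-1})$ and $u\neq 0$, we have $Z(\hat M)=(z_M\,u^{-1})\,Z(u_{n-1})$; substituting into the third identity and comparing with the second yields
\[ Z(M\# N)=(z_M\,u^{-1})\,Z(N)=(\epsilon\,u)^{-1}Z(M)\,Z(N)=Z(S^n)^{-1}Z(M)\,Z(N). \]

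The step I expect to be the main obstacle is the geometric bookkeeping behind the three bordism identities, rather than the algebra. I would need to check that drilling a ball out of the interior of $N$ and recapping it with the unit disk returns $N$ --- using that $N$ is connected, so the embedded ball is unique up to isotopy and the gluing site is immaterial --- and that the orientations of the neck spheres and the capping disks match, so that the caps really are the unit and counit of the Frobenius structure and not their opposites. Once these identities are secured, the conclusion is forced by the one-dimensionality of $Z(S^{n-1})$, which turns every map through the neck into a scalar.
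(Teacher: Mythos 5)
Your proof is correct and follows essentially the same route as the paper's: both exploit that the Frobenius algebra structure on the one-dimensional space $Z(S^{n-1})$ makes the two disk maps (and hence their composite $Z(S^n)$) invertible, and then collapse the connected-sum neck by expressing the capped-off piece of $M$ as a scalar multiple of the unit disk. The only cosmetic difference is that you treat a general incoming boundary $A$ directly by carrying $\id_A$ along in the gluing, whereas the paper first reduces to $A=\emptyset$ by precomposing with $A\times D^1$; the underlying algebra is identical.
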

\begin{proof} Since $Z(D^n): Z(\emptyset) \to Z(S^{n-1})$ and $Z(D^n): Z(S^{n-1}) \to Z(\emptyset)$ are the unit and counit of a Frobenius algebra on $k$, it follows that they and hence also their composite $Z(S^n)$ is invertible.
Note that Proposition~\ref{prop:multiplicativityconnectsum} follows from the special case $A=\emptyset$ by precomposing $N:A\to B$ with the bordism $A\times D^1:\emptyset \to A\sqcup \overline{A}$. For $A=\emptyset$, the proposition follows from the following decomposition: 
\[Z(M\# N) = Z(\emptyset)\to[Z(M \setminus D^n)] Z(S^{n-1}) \to [Z(N \setminus D^n)] Z(B)  
\]
\[=Z(\emptyset) \to [\!\!\! Z(M\setminus D^n)\!\!\!] Z(S^{n-1}) \to[\!\!\!Z(D^n)\!\!\!] Z(\emptyset)\to[\!\!\!Z(D^n)^{-1}\!\!\!\!\!\!] Z(S^{n-1})\to[\!\!\!Z(D^n)^{-1}\!\!\!\!\!\!] Z(\emptyset) \to [\!\!\!Z(D^n)\!\!\!] Z(S^{n-1}) \to [\!\!\!Z(N\setminus D^n)\!\!\!]Z(B)
\]
\[ = Z(\emptyset)\to[Z(M)] Z(\emptyset) \to[Z(S^{n})^{-1}] Z(\emptyset) \to [Z(N)] Z(B)\qedhere
\]
\end{proof}
\subsection{Proving $S^2\times S^2$-stability}\label{sec:main}
Combining Propositions~\ref{prop:decomposess} and~\ref{prop:multiplicativityconnectsum}, proving $S^2\times S^2$-stability of semisimple topological field theories is equivalent to proving invertibility of $Z(S^2\times S^2)$ for indecomposable semisimple topological field theories. In the following section, we achieve this by combining a certain diffeomorphism of $4$-bordisms with the following well-known algebraic characterization of semisimple Frobenius algebras.

\begin{prop}\label{prop:window}A commutative Frobenius algebra $(A,m,u, \Delta, \epsilon)$ is semisimple if and only if the `window endomorphism' $m\circ \Delta:A \to A$ is invertible.
\end{prop}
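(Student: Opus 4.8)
The plan is to transport the statement into the commutative algebra $A$ itself and reduce it to the classical nondegeneracy criterion for the trace form. Write $1 := u(1) \in A$ for the unit and set $H := m(\Delta(1)) \in A$, the \emph{handle element}.

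First I would identify the window endomorphism with multiplication by $H$. The Frobenius compatibility condition~\eqref{eq:Frobenius} is precisely the statement that $\Delta$ is a morphism of $A$-modules, so that $\Delta(a) = (a \otimes \id_A)\Delta(1)$ for all $a \in A$. Applying $m$ gives $(m\circ \Delta)(a) = a \cdot H$. Hence $m\circ\Delta = L_H$, multiplication by $H$, and since $A$ is finite-dimensional, $m\circ\Delta$ is invertible if and only if $H$ is an invertible element of $A$.

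Second, I would relate $H$ to the trace form. Let $\langle a, b\rangle := \epsilon(m(a\otimes b))$ be the nondegenerate pairing supplied by the Frobenius structure, and choose dual bases $\{e_i\}$, $\{e^i\}$ of $A$ with $\langle e_i, e^j\rangle = \delta_i^j$. The canonical expression for the copairing gives $\Delta(1) = \sum_i e_i \otimes e^i$, whence $H = \sum_i e_i e^i$. Writing $L_a$ for multiplication by $a$, a short computation with the dual bases yields
\[ \mathrm{tr}(L_a) = \sum_i \langle a e_i, e^i \rangle = \epsilon\!\left(a \sum_i e_i e^i\right) = \langle a, H\rangle. \]
Consequently the symmetric bilinear trace form $\tau(a,b) := \mathrm{tr}(L_{ab}) = \langle ab, H\rangle = \langle a, bH\rangle$ factors as the nondegenerate form $\langle\cdot,\cdot\rangle$ precomposed with $L_H$. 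Therefore $\tau$ is nondegenerate if and only if $L_H$ is injective, i.e.\ (by finite-dimensionality) if and only if $H$ is invertible. Combining with the first step, $m\circ\Delta$ is invertible if and only if the trace form $\tau$ is nondegenerate.

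Finally I would show that $A$ is semisimple if and only if $\tau$ is nondegenerate. I would decompose the finite-dimensional commutative algebra $A$ as a finite product $\prod_i A_i$ of local algebras; since $k$ is algebraically closed each $A_i$ has residue field $k$, and the decomposition is orthogonal for $\tau$, so $\tau$ is nondegenerate if and only if each $\tau_{A_i}$ is. If $A_i \iso k$ then $\tau_{A_i}(1,1) = \mathrm{tr}(L_1) = 1 \neq 0$, so $\tau_{A_i}$ is nondegenerate; if $\dim_k A_i \geq 2$ then the maximal ideal $\mathfrak{m}_i$ is nonzero and nilpotent, so $L_{ab}$ is nilpotent and $\mathrm{tr}(L_{ab}) = 0$ for every $a \in \mathfrak{m}_i$, placing $\mathfrak{m}_i$ in the radical of $\tau_{A_i}$. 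Hence $\tau$ is nondegenerate precisely when every $A_i \iso k$, i.e.\ precisely when $A$ is semisimple. The main obstacle is the first step: correctly extracting from the Frobenius compatibility condition that $\Delta$ is an $A$-module map, so that the window endomorphism is genuinely multiplication by the single element $H$. Once the problem is moved inside $A$, the remaining equivalences are the standard trace-form arguments, whose only delicate point is handling the non-semisimple local factors via nilpotency of their maximal ideals.
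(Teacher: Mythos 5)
Your proof is correct, but it takes a genuinely different route from the paper's. The paper argues via \emph{separability}: it recalls that over an algebraically closed (more generally, perfect) field semisimplicity is equivalent to the existence of a separating morphism $\delta:A\to A\otimes A$ with $m\circ\delta=\id_A$ satisfying the Frobenius condition, then checks directly that $(\id_A\otimes\epsilon)\circ\delta$ is an inverse of the window $m\circ\Delta$, and conversely that if $f$ inverts the window then $\Delta\circ f$ is a separating morphism. You instead internalize everything in $A$: the window is multiplication by the handle element $H=m(\Delta(1))$, the trace form is the nondegenerate Frobenius pairing twisted by $L_H$, and nondegeneracy of the trace form is equivalent to semisimplicity via the decomposition of a finite-dimensional commutative algebra into local Artinian factors. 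Your route is more self-contained (no citation needed, only structure theory and linear algebra) and it exposes the handle element, which is conceptually illuminating; but it leans on commutativity and on algebraic closedness in the last step --- the claim ``$\dim_k A_i\geq 2$ forces a nonzero nilpotent maximal ideal'' needs residue field $k$ (over $\mathbb{R}$ the local factor $\mathbb{C}$ has dimension $2$ and nondegenerate trace form), whereas the paper's separability argument isolates exactly this dependence on the base field and transfers to non-commutative settings with little change. Two small points you should make explicit: the Frobenius axioms force $A$ to be finite-dimensional (the snake identity satisfied by $\Delta(1)$, which follows from~\eqref{eq:Frobenius} applied to $1\otimes a$ together with the counit axiom, exhibits $\id_A$ as factoring through a finite-rank tensor), and that same snake identity is what justifies your identification $\Delta(1)=\sum_i e_i\otimes e^i$ with the dual-basis copairing; both are one-line verifications, so these are gaps of exposition rather than of substance.
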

\begin{proof} Recall that a $k$-algebra $(A,m:A\otimes A \to A, u:k \to A)$ is \emph{separable} if there exists a \emph{separating morphism} $\delta:A \to A\otimes A$ such that $m\circ \delta = \id_A$ and such that $m$ and $\delta$ fulfill the Frobenius condition~\eqref{eq:Frobenius}. Over an algebraically closed field $k$, the notions of separability and semisimplicity are equivalent\footnote{This is more generally true over perfect fields. For more general fields, separability is a strengthening of semisimplicity and is equivalent to the statement that $A\otimes_k K$ is semisimple for every field extension $K$ of $k$. \looseness=-2}~\cite{separable}. 

Let $\delta: A\to A\otimes A$ be a separating morphism for $(A,m,u)$. Using the Frobenius condition~\eqref{eq:Frobenius} and the fact that $\delta$ is a separating morphism, it can then directly be shown that $x:= (\id_A \otimes \epsilon)\circ \delta = (\epsilon \otimes \id_A)\circ \delta: A \to A$ is inverse to $m\circ \Delta$.

Conversely, if $m\circ \Delta:A\to A$ is invertible with inverse $f:A\to A$, then $\Delta \circ f$ is a separating morphism.
\end{proof}

In $\Bord_4$, the `window endomorphism' of the commutative Frobenius algebra object $(S^2\times S^1, m_2 \times S^1, u_2\times S^1, \Delta_2 \times S^1, \epsilon_2 \times S^1)$ is the composite bordism
\[W_{S^2\times S^1}:= S^2 \times S^1 \to[ \Delta_2 \times S^1] S^2\times S^1 \sqcup S^2 \times S^1 \to [m_2 \times S^1] S^2 \times S^1.
\]
This endomorphism has `eigenvector' $S^2\times D^2$, as witnessed by the following diffeomorphism.%
\begin{prop}\label{prop:manifolddecomposition} There is an orientation preserving diffeomorphism of the following oriented bordisms $\emptyset \to S^2\times S^1$:
\begin{equation}\label{eq:firstdiffeo}  W_{S^2\times S^1} \cup_{S^2\times S^1} (S^2 \times D^2)  \iso (S^2\times D^2) \# (S^3 \times S^1)  \#(S^2 \times S^2)  \end{equation}
Here, the connected sum $\#$ is taken in the interior of $S^2\times D^2$.
\end{prop}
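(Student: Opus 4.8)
The plan is to reduce everything to a single Kirby-calculus computation on $S^2 \times D^2$. First I would exploit the product structure of the bordisms: since the fusion Frobenius algebra is built from $(m_2\times S^1,\dots)$, the window bordism factors as $W_{S^2\times S^1} = W_{S^2}\times S^1$, where $W_{S^2}:=m_2\circ\Delta_2\colon S^2\to S^2$ is the window bordism one dimension down. A short cut-and-paste argument — gluing two copies of the solid pair of pants $D^3\setminus(\mathring D^3\sqcup\mathring D^3)$ along two of their boundary spheres, and checking e.g.\ that the Euler characteristic is $2$ — identifies $W_{S^2}$ with $(S^2\times I)\,\#\,(S^2\times S^1)$, the connected sum taken in the interior of $S^2\times I$. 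Equivalently, $W_{S^2}$ is obtained from the collar $S^2\times I$ by attaching one $3$-dimensional $1$-handle and one $3$-dimensional $2$-handle, accounting for the new $\pi_1=\ZZ$ and $H_2=\ZZ$.

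Next I would absorb the cap $S^2\times D^2$ into the product part. The outgoing boundary of $W_{S^2}\times S^1$ is $(S^2\times\{1\})\times S^1=S^2\times S^1$, and a direct computation shows that capping the product piece there is harmless: $(S^2\times I)\times S^1\cup_{S^2\times S^1}(S^2\times D^2)\cong S^2\times D^2$, because the annulus $I\times S^1$ becomes a disk once $D^2$ is glued along one of its boundary circles. Since the connected-sum region of $W_{S^2}$ lies in the interior of the $S^2\times I$ factor, away from the outgoing end, the entire left-hand side is therefore diffeomorphic to $S^2\times D^2$ with a standardly embedded $D^3\times S^1$ — a tubular neighbourhood of an unknotted, null-homotopic circle — excised and replaced by $\big((S^2\times S^1)\setminus\mathring D^3\big)\times S^1$. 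Since this replacement piece together with $D^3\times S^1$ recovers $S^2\times T^2$, the left-hand side is exactly the circle sum $(S^2\times D^2)\,\#_{S^1}\,(S^2\times T^2)$.

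To identify this explicitly, I would pass to handles: crossing the $3$-dimensional $1$- and $2$-handles of $W_{S^2}$ with $S^1$ produces a round $1$-handle and a round $2$-handle, each of which splits into an ordinary pair of $4$-dimensional handles — the round $1$-handle into a $1$-handle and a $2$-handle, the round $2$-handle into a $2$-handle and a $3$-handle. After the capping of the previous step these four handles are attached to $S^2\times D^2$, which I would record as a $0$-framed $2$-handle on an unknot in a Kirby diagram. Simplifying the resulting diagram, the $1$-handle and the $3$-handle should combine into an $S^3\times S^1$ summand, while the two remaining $2$-handles form a $0$-framed Hopf link and hence an $S^2\times S^2$ summand; the handle count (one $1$-handle, two $2$-handles, one $3$-handle, signature $0$) matches this prediction. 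This would yield $(S^2\times D^2)\#(S^3\times S^1)\#(S^2\times S^2)$, with the connected sum localized in the $S^2\times D^2$ factor exactly as claimed.

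The main obstacle is the framing bookkeeping in the last step. The answer is the \emph{untwisted} $S^2\times S^2$ rather than the twisted bundle $S^2\,\tilde\times\,S^2\cong\CPt\#\overline{\CP}^2$, and the two are not diffeomorphic, so the computation must genuinely certify that the two $2$-handles carry vanishing framings and link exactly once. I expect this to hold because the excised $D^3\times S^1$ is a \emph{product} tubular neighbourhood, so the round handles are built on the trivial normal framing; making this rigorous — tracking the attaching circles, their framings, and the separation of the $S^3\times S^1$ summand through the Kirby moves — is where the real work lies. It is precisely this automatic untwistedness that I expect to fail in the $\CPt$-stable setting of Proposition~\ref{prop:diffeoCP1}, where the Gluck twist enters.
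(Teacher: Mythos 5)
Your reductions are all correct, and in one respect your setup is cleaner than the paper's: by absorbing the cap through $(S^2\times I\times S^1)\cup_{S^2\times S^1}(S^2\times D^2)\iso S^2\times D^2$ you keep the outgoing boundary $S^2\times S^1$ fixed throughout, so whatever diffeomorphism you produce is automatically a diffeomorphism of bordisms. The paper instead composes both sides with $D^3\times S^1$ to obtain closed manifolds, identifies those by Kirby calculus, and then must argue separately (using that both embedded circles $D^3\times S^1$ are null-isotopic) that a diffeomorphism of the closed manifolds can be isotoped to a diffeomorphism of pairs; your capping trick avoids that step entirely. Beyond this, the two arguments converge: your circle-sum localization and the paper's closing-up both reduce the proposition to the single statement that product-framed surgery on the circle $\{pt\}\times\{pt\}\times S^1\subset S^2\times S^1\times S^1$ (replacing its neighbourhood $D^3\times S^1$ by $S^2\times D^2$) yields $(S^3\times S^1)\#(S^2\times S^2)$, with the \emph{untwisted} bundle.

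That single statement, however, is the entire content of the proposition, and your proposal explicitly stops short of proving it: your final paragraph correctly identifies that one must rule out $S^2\widetilde{\times}S^2\iso\CPt\#\overline{\CP}^2$, but ``I expect this to hold because the neighbourhood is a product'' is the heuristic, not the argument, and your round-handle bookkeeping as described gives no mechanism for certifying the framings or the splitting-off of the $S^3\times S^1$ summand. This is exactly what the paper's proof supplies, and the device is worth internalizing: take the standard diagram of $S^2\times S^1\times S^1$ --- Borromean rings with two dotted components and one $0$-framed component --- \emph{together with an extra $0$-framed meridian of the $0$-framed component}. In Akbulut's dotted-circle notation the surgery in question is implemented by replacing one dotted circle by a $0$-framed circle (this is where the product framing is encoded, with no further bookkeeping); a single handle slide of this new $0$-framed circle over the $0$-framed meridian then splits it off from the link, after which it cancels against a $3$-handle, and since the Borromean rings minus one component form an unlink, what remains is a split union of a dotted unknot and a $0$-framed Hopf link, i.e.\ precisely $(S^3\times S^1)\#(S^2\times S^2)$. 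Without the auxiliary meridian and this slide-off move, your predicted final diagram (dotted unknot, $0$-framed Hopf link, correct handle count) is asserted rather than derived; with them, your plan closes up into a complete proof.
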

\begin{proof}
Let $L$ and $R$ denote the bordisms $\emptyset \to S^2\times S^1$ on the left and right of~\eqref{eq:firstdiffeo}, respectively. We construct a diffeomorphism of bordisms between $L$ and $R$ by composing them with the bordism $D^3\times S^1:S^2\times S^1\to \emptyset$ and constructing a diffeomorphism of pairs
\begin{equation}\label{eq:diffeopairs}\left( \widetilde{L}:=(D^3\times S^1) \cup_{S^2\times S^1} L , D^3\times S^1 \right) \to \left(\widetilde{R}:=(D^3\times S^1) \cup_{S^2\times S^1} R, D^3\times S^1 \right).
\end{equation}%

Using the standard diffeomorphism $(D^3\times S^1)\cup_{S^2\times S^1} (S^2\times D^2)\iso S^4$, it follows that the closed oriented $4$-manifold $\widetilde{R}$ is diffeomorphic to $ (S^3\times S^1)\#(S^2\times S^2) $. Moreover, the embedded circle $D^3\times S^1\hookrightarrow \widetilde{R}$ is null-isotopic.

Observe that the composite bordism $S^2\to[\Delta_2] S^2\sqcup S^2\to[m_2] S^2$ is diffeomorphic to the bordism $(S^2\times S^1)_2: S^2\to S^2$ obtained from removing two $3$-disks from the closed oriented $3$-manifold $S^2\times S^1$. Therefore, $(D^3\times S^1)\cup_{S^2\times S^1} W_{S^2\times S^1}$ is diffeomorphic to the bordism $(S^2\times S^1)_1 \times S^1: S^2\times S^1\to \emptyset$, where $(S^2\times S^1)_1$ is obtained from removing a single $3$-disk from $S^2\times S^1$. In particular, the embedded circle \[D^3\times S^1\hookrightarrow \widetilde{L}= (S^2\times S^1)_1 \times S^1 \cup_{S^2\times S^1} (S^2\times D^2)\] is null-isotopic. Since $D^3\times S^1\hookrightarrow \widetilde{R}$ is also null-isotopic, any orientation-preserving diffeomorphism of the closed oriented $4$-manifolds $\widetilde{L} \to \widetilde{R}$ may be isotoped to a diffeomorphism of pairs~\eqref{eq:diffeopairs}. 

The closed oriented $4$-manifold $\widetilde{L} \iso (S^2\times S^1)_1\times S^1 \cup_{S^2\times S^1} (S^2\times D^2)$ is obtained from $S^2\times S^1\times S^1$ by performing surgery --- that is, replacing an embedded $D^3\times S^1$ by an $S^2\times D^2$ --- on the last $S^1$ of $S^2\times S^1 \times S^1$. In Akbulut's dotted circle notation for $1$-handles~\cite[Sec 5.4]{GompfStip}, the standard handle diagram for $S^2\times S^1 \times S^1$ is a Borromean link with two dotted and one $0$-framed component, together with an additional $0$-framed unknot around the meridian of the $0$-framed component:
\[\begin{tz}[scale=0.7]
\path  (0.5,2) arc (90:45:1) node (p){};
\begin{knot}[
    clip width=4,
    ignore endpoint intersections=false
    ]
    \strand[thick] (0,0) circle (1.0cm);
    \strand [thick ] (1,0) circle (1.0cm);
    \strand  [thick] (0.5,1) circle (1.0cm);
    \strand[thick] (p) circle (0.3cm);
    \flipcrossings{1, 2, 5, 6,8}
\end{knot}
\path  (1,0) arc (0:225:1) node[pos=1,dot]{};
\path (0,0) arc (180:315:1) node[pos=1, dot]{};
\path (1.5,1) arc (0:135:1) node[above left] {$0$};
\path (p) arc (0:30:0.3) node[above right] {$0$};
\end{tz}
\]
Performing surgery on the last circle of $S^2\times S^1\times S^1$ corresponds to replacing one of the dotted circles with a $0$-framed circle. Sliding this new $0$-framed circle (twice) over the small $0$-framed meridian circle disentangles it from the rest of the link and allows it to be cancelled against a $3$-handle:
\[\begin{tz}[scale=0.7]
\path  (0.5,2) arc (90:45:1) node (p){};
\begin{knot}[
    clip width=4,
    ignore endpoint intersections=false
    ]
    \strand[thick] (0,0) circle (1.0cm);
    \strand [thick ] (1,0) circle (1.0cm);
    \strand  [thick] (0.5,1) circle (1.0cm);
    \strand[thick] (p) circle (0.3cm);
    \flipcrossings{1, 2, 5, 6,8}
\end{knot}
\path  (1,0) arc (0:225:1) node[pos=1,dot]{};
\path (0,0) arc (180:315:1) node[pos=1, below right]{0};
\path (1.5,1) arc (0:135:1) node[above left] {$0$};
\path (p) arc (0:30:0.3) node[above right] {$0$};
\end{tz}
\handlegap\overset{\text{slide off}}{\rightsquigarrow}\handlegap
\begin{tz}[scale=0.7]
\path  (0.5,2) arc (90:45:1) node (p){};
\begin{knot}[
    clip width=4,
    ignore endpoint intersections=false
    ]
    \strand[thick] (0,0) circle (1.0cm);
    \strand [thick ] (3,0) circle (1.0cm);
    \strand  [thick] (0.5,1) circle (1.0cm);
    \strand[thick] (p) circle (0.3cm);
    \flipcrossings{ 4}
\end{knot}
\path  (1,0) arc (0:225:1) node[pos=1,dot]{};
\path (2,0) arc (180:315:1) node[pos=1, below right]{0};
\path (1.5,1) arc (0:135:1) node[above left] {$0$};
\path (p) arc (0:30:0.3) node[above right] {$0$};
\end{tz}
\handlegap\overset{\text{cancel}}{\rightsquigarrow}\handlegap
\begin{tz}[scale=0.7]
\begin{knot}[
    clip width=4,
    ignore endpoint intersections=false
    ]
    \strand[thick] (-2.5,0) circle (1cm);
    \strand[thick] (0,0) circle (1.0cm);
    \strand [thick ] (1,0) circle (1.0cm);
    \flipcrossings{2}
\end{knot}
\path  (-1.5,0) arc (0:180:1) node[pos=1,dot]{};
\path (1,0) arc (0:135:1) node[pos=1, above left]{0};
\path (2,0) arc (0:45:1) node[above right] {$0$};
\end{tz}
\]
The resulting handle diagram is precisely a diagram for $\widetilde{R}\iso (S^3\times S^1) \#(S^2\times S^2) $.
\end{proof}

Combining the diffeomorphism of Proposition~\ref{prop:manifolddecomposition} with Proposition~\ref{prop:window} results in the following theorem.
\begin{theorem} \label{thm:S2invertible}Let $Z$ be an indecomposable semisimple oriented $4$-dimensional topological field theory. Then, $Z(S^2\times S^2)$ is invertible. 
\end{theorem}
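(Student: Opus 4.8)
The plan is to deduce invertibility of the scalar $Z(S^2\times S^2)$ from the invertibility of the window endomorphism of the fusion algebra, transported along the diffeomorphism of Proposition~\ref{prop:manifolddecomposition}. First I would record two consequences of the hypotheses. Since $Z$ is indecomposable and semisimple, Proposition~\ref{prop:decomposess} gives $Z(S^3)\iso k$, so by Proposition~\ref{prop:multiplicativityconnectsum} the theory is multiplicative under connected sums and $Z(S^4)$ is invertible. Since $Z$ is semisimple, its fusion algebra $Z(S^2\times S^1)$ is semisimple, so by Proposition~\ref{prop:window} the window endomorphism $m\circ\Delta = Z(W_{S^2\times S^1})\colon Z(S^2\times S^1)\to Z(S^2\times S^1)$ is invertible.

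Next I would apply $Z$ to both sides of the diffeomorphism~\eqref{eq:firstdiffeo}. Writing $v := Z(S^2\times D^2)\in Z(S^2\times S^1)$ for the vector obtained from the bordism $S^2\times D^2\colon\emptyset\to S^2\times S^1$, the left-hand side becomes $Z(W_{S^2\times S^1})(v)$. For the right-hand side, the connected sum is taken in the interior of $S^2\times D^2$ with the closed $4$-manifolds $S^3\times S^1$ and $S^2\times S^2$, so two applications of Proposition~\ref{prop:multiplicativityconnectsum} yield
\[ Z\bigl((S^2\times D^2)\#(S^3\times S^1)\#(S^2\times S^2)\bigr) = Z(S^4)^{-2}\, Z(S^3\times S^1)\, Z(S^2\times S^2)\, v. \]
Thus $v$ is an eigenvector of the window endomorphism with eigenvalue the scalar $\lambda := Z(S^4)^{-2}\,Z(S^3\times S^1)\,Z(S^2\times S^2)$.

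Finally I would show that the eigenvector $v$ is nonzero. Composing $v$ with the counit $\epsilon = Z(\epsilon_2\times S^1) = Z(D^3\times S^1)$ and using the standard diffeomorphism $(D^3\times S^1)\cup_{S^2\times S^1}(S^2\times D^2)\iso S^4$ gives $\epsilon(v) = Z(S^4)\neq 0$, whence $v\neq 0$. Since $Z(W_{S^2\times S^1})$ is invertible and therefore injective, $Z(W_{S^2\times S^1})(v) = \lambda v\neq 0$, which forces $\lambda\neq 0$. As $Z(S^4)^{-2}$ is invertible, it follows that $Z(S^2\times S^2)\neq 0$; being the value of $Z$ on a closed oriented $4$-manifold, this scalar in $k$ is therefore invertible. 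The main obstacle here is conceptual rather than computational: invertibility of the window map supplies only an eigenvalue relation for the single distinguished vector $v$, so the argument genuinely hinges on the separate verification that $v\neq 0$, without which the eigenvalue $\lambda$ could a priori vanish and the conclusion would fail.
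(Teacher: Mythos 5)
Your proof is correct and takes essentially the same route as the paper: invertibility of the window endomorphism $Z(W_{S^2\times S^1})$ from semisimplicity via Proposition~\ref{prop:window}, the eigenvector relation for $v = Z(S^2\times D^2)$ obtained by applying $Z$ to the diffeomorphism of Proposition~\ref{prop:manifolddecomposition} together with multiplicativity (Proposition~\ref{prop:multiplicativityconnectsum}), and the crucial check that $v\neq 0$. The only (harmless) difference is in that last check: you pair $v$ with the counit $Z(D^3\times S^1)$ and use $(D^3\times S^1)\cup_{S^2\times S^1}(S^2\times D^2)\cong S^4$ to get $Z(S^4)\neq 0$, whereas the paper observes that $v$ is the unit of the algebra $\left(Z(S^2\times S^1), Z(S^2\times m_1), Z(S^2\times u_1)\right)$, which is nonzero by Proposition~\ref{prop:nonzero}.
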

\begin{proof} 
By definition, the commutative Frobenius algebra $Z(S^2\times S^1)$ is semisimple. Hence, by Proposition~\ref{prop:window}, the window endomorphism $Z(w_{S^2\times S^1}): Z(S^2\times S^1) \to Z(S^2\times S^1)$ is invertible. Applying $Z$ to the diffeomorphism of Theorem~\ref{prop:manifolddecomposition} and using multiplicativity under connected sums (Proposition~\ref{prop:multiplicativityconnectsum}), we find:
\[Z(W_{S^2\times S^1}) \circ Z(S^2\times D^2) =  Z(S^2\times D^2) ~\left(Z(S^4)^{-2}  Z(S^3\times S^1) Z(S^2\times S^2)\right).
\]
Since $Z(S^2\times D^2):k \to Z(S^2\times S^1)$ is the unit of the (non-zero by Proposition~\ref{prop:nonzero}) algebra $(Z(S^2\times S^1), Z(S^2\times m_1), Z(S^2\times u_1))$ and hence a non-zero vector in $Z(S^2\times S^1)$, it follows that the scalar $Z(S^4)^{-2}Z(S^3\times S^1) Z(S^2\times S^2)\in k$ is an eigenvalue of the invertible endomorphism $Z(W_{S^2\times S^1})$ and is therefore itself invertible.
\end{proof}

\begin{example}\label{exm:CYS2} Using~\eqref{eq:CraneYetterformula} and the standard handle diagram of $S^2\times S^2$ with two $2$-handles attached along a $0$-framed Hopf link, the value of $\CY_{\cC}(S^2\times S^2)$ can be explicitly computed as a product of the global dimensions of $\cC$ and of the symmetric center of $\cC$ (see~\cite[Sec 6.1]{dichromatic}):\looseness=-2
\[\CY_{\cC}(S^2\times S^2) = \cD_{\cC}^2  \sum_{X \in Z_{sym}(\cC)} \dim(X)^2
\]
In particular, Theorem~\ref{thm:S2invertible} may be understood as a geometric analogue of the invertibility of the global dimension of braided fusion categories~\cite{braided}.
\end{example}
Our main Theorem~\ref{thm:main} is a direct corollary of Theorem~\ref{thm:S2invertible}.
\begin{proof}[Proof of Theorem~\ref{thm:main}]
By Proposition~\ref{prop:decomposess}, we can assume that $Z$ is indecomposable and in particular that $Z(S^3)\iso k$. Let $n\in \mathbb{Z}_{\geq 0}$ be such that $M\#^n(S^2\times S^2)\iso N\#^n(S^2\times S^2)$. Multiplicativity of indecomposable semisimple field theories (Proposition~\ref{prop:decomposess}) implies that
\[Z(M) ~Z(S^4)^{-n} Z(S^2\times S^2)^n = Z(N) ~Z(S^4)^{-n}Z(S^2\times S^2)^n.
\]
Since $Z(S^2\times S^2)$ is invertible (Theorem~\ref{thm:S2invertible}), it follows that $Z(M)= Z(N)$.
\end{proof}

\subsection{Consequences of Theorem~\ref{thm:main}}\label{sec:consequencesA}
Stable diffeomorphism is a very well-studied equivalence relation on the set of closed oriented $4$-manifolds. A classical theorem of Wall~\cite{Wall} shows that two connected, simply connected closed oriented $4$-manifolds are $S^2\times S^2$-stably diffeomorphic if and only if they have isomorphic intersection forms. More generally, Kreck~\cite{Kreck} showed that the stable diffeomorphism class of a closed oriented $4$-manifold is completely determined by its oriented $1$-type and the bordism class of the manifold in an appropriately structured bordism group. This essentially reduces the classification of stable diffeomorphism classes to a bordism problem. Kreck's classification is particularly simple in the non-spinnable case: Indeed, the $S^2\times S^2$-stable diffeomorphism class of any closed oriented $4$-manifold $M$ whose universal cover does not admit a spin structure is completely determined by its Euler characteristic $\chi(M)$, signature $\sigma(M)$, fundamental group $\pi_1(M)$ and the image of the fundamental class $c_*[M] \in H_4(\pi_1(M), \mathbb{Z})$ under a classifying map of the universal cover $c:M\to K(\pi_1(M), 1)$.

Using these results and a theorem of Gompf~\cite{Gompf}, we obtain a proof of Corollary~\ref{cor:main}.

\begin{proof}[Proof of Corollary~\ref{cor:main}]
The first statement follows from a theorem of Gompf~\cite{Gompf} which shows that two homeomorphic closed oriented $4$-manifolds are $S^2\times S^2$-stably diffeomorphic.
The second statement follows from Wall's theorem~\cite{Wall}.
The last statement is a direct consequence of Kreck's classification~\cite{Kreck} of $S^2\times S^2$-stable diffeomorphism classes of closed oriented $4$-manifolds with universal covers which do not admit spin structures.
\end{proof}
\begin{remark}Alternatively, Corollary~\ref{cor:main} 2. also follows from Corollary~\ref{cor:main} 1. and Freedman's celebrated classification of simply connected topological $4$-manifolds~\cite{Freedman} which implies that two simply connected closed smooth $4$-manifolds are homeomorphic if and only if they have the same homotopy type.
\end{remark}

\begin{remark}
In Corollary~\ref{cor:main}, the assumptions on simply-connectedness or non-spinnability of universal covers are due to the relatively simple classification of stable diffeomorphism types in these situations. For the classification of stable diffeomorphism types of more general manifolds, see e.g.~\cite{teichnerthesis, davis} or Question~\ref{q:teichner}.
\end{remark}

Using Theorem~\ref{thm:main}, we can evaluate an indecomposable semisimple theory $Z$ on a manifold $M$ by evaluating it on a potentially much simpler `reference manifold' in the same $S^2\times S^2$-stable diffeomorphism class. For example, to evaluate $Z$ on a simply connected closed oriented $4$-manifold $M$, we only need to know the Euler characteristic and signature of $M$ and the value of $Z$ on the 
manifolds $S^4$, $S^2\times S^2$, $\CPt$, $\overline{\CP}^2$ and the Kummer surface $K3$.

\begin{corollary}\label{cor:explicitformula} Let $Z$ be an indecomposable semisimple oriented $4$-dimensional topological field theory and let $M$ be a connected, simply connected closed oriented $4$-manifold. If $M$ does not admit a spin structure, then 
\begin{equation}\label{eq:simplyconnectednonspin}\widetilde{Z}(M) = \widetilde{Z}(\CPt)^{\frac{1}{2} \left(\chi(M)+\sigma(M)-2\right)}~\widetilde{Z}(\overline{\CP}^2)^{\frac{1}{2}\left(\chi(M)-\sigma(M)-2\right)}.
\end{equation}
If $M$ admits a spin structure, then 
\begin{equation}\label{eq:simplyconnectedspin}\widetilde{Z}(M)= \widetilde{Z}(K3)^{-\frac{\sigma(M)}{16}} ~\widetilde{Z}(S^2\times S^2)^{\frac{1}{2} \left(\chi(M)-2 + \frac{11}{8} \sigma(M)\right)}.
\end{equation}
Here, for a closed oriented $4$-manifold $M$, we use the notation $\widetilde{Z}(M):= Z(S^4)^{-1} Z(M)$. 
\end{corollary}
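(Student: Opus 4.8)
The plan is to reduce the computation to a statement about the $S^2\times S^2$-stable diffeomorphism classification, via Theorem~\ref{thm:main} and the multiplicativity of Proposition~\ref{prop:multiplicativityconnectsum}. First I would invoke the normalization $\widetilde{Z}(M) = Z(S^4)^{-1}Z(M)$; by Proposition~\ref{prop:multiplicativityconnectsum}, since $Z(S^3)\iso k$, this normalization is multiplicative under connected sum in the strict sense that $\widetilde{Z}(M_1\# M_2) = \widetilde{Z}(M_1)\,\widetilde{Z}(M_2)$ and $\widetilde{Z}(S^4) = 1$. This converts additive invariants of $M$ (Euler characteristic, signature) into multiplicative exponents in the formula, which is why the right-hand sides appear as products of powers.

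Next I would identify, in each of the two cases, a simple ``reference manifold'' that is $S^2\times S^2$-stably diffeomorphic to $M$ and is a connected sum of the standard building blocks $\CPt$, $\overline{\CP}^2$, $S^2\times S^2$, and $K3$. For the non-spin case, Wall's theorem tells us that simply connected closed oriented $4$-manifolds are classified up to $S^2\times S^2$-stable diffeomorphism by their intersection form; since $M$ is non-spin the form is odd and thus diagonalizable as $p\langle 1\rangle \oplus q\langle -1\rangle$, so $M$ is stably diffeomorphic to $\#^p\CPt \#^q \overline{\CP}^2$. Solving $b_2^+ = p$, $b_2^- = q$ in terms of $\chi$ and $\sigma$ via $\sigma = p - q$ and $\chi = 2 + b_2 = 2 + p + q$ gives $p = \tfrac{1}{2}(\chi+\sigma-2)$ and $q = \tfrac{1}{2}(\chi-\sigma-2)$, which are exactly the exponents in~\eqref{eq:simplyconnectednonspin}. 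Applying Theorem~\ref{thm:main} and multiplicativity of $\widetilde{Z}$ then yields the first formula. For the spin case the intersection form is even, hence (by the classification of even unimodular forms) of the shape $a E_8 \oplus b H$ up to stabilization; realizing this by connected sums of copies of $K3$ (which contributes $-E_8^{\oplus 2}\oplus 3H$ up to stabilization, with $\sigma(K3)=-16$) and $S^2\times S^2$ (which contributes $H$), I would again solve the bookkeeping in $\chi$ and $\sigma$ to recover the exponents $-\tfrac{\sigma}{16}$ and $\tfrac{1}{2}(\chi - 2 + \tfrac{11}{8}\sigma)$ in~\eqref{eq:simplyconnectedspin}.

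The main obstacle I anticipate is the spin case: one must pick the correct combination of $K3$ and $S^2\times S^2$ realizing an arbitrary even intersection form \emph{stably}, and verify that the resulting exponents are the stated ones. The subtlety is that $-\tfrac{\sigma}{16}$ counts copies of $K3$ (using $\sigma(K3)=-16$) while $S^2\times S^2$ absorbs the remaining second homology; the fraction $\tfrac{11}{8}$ arises precisely because $K3$ has $b_2 = 22$ but $\sigma = -16$, so each $K3$ summand contributes $22$ to $b_2$ while being weighted by $\tfrac{1}{16}$ in the signature count, giving the correction $22 \cdot \tfrac{1}{16} = \tfrac{11}{8}$ per unit of $-\sigma$. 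I would carefully track that the number of $K3$ summands $-\tfrac{\sigma}{16}$ is a non-negative integer exactly when $16 \mid \sigma$ (Rokhlin), and that the number of $S^2\times S^2$ summands works out to the stated value; the only genuine input beyond linear algebra is the stable realization of even forms by these two manifolds, together with the $S^2\times S^2$-stable classification of simply connected spin $4$-manifolds, both of which follow from Wall's theorem and the algebraic classification of indefinite even unimodular forms.
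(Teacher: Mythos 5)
Your overall strategy is exactly the paper's: pick a reference manifold built from $\CPt$, $\overline{\CP}^2$, $S^2\times S^2$ and $K3$ in the same $S^2\times S^2$-stable diffeomorphism class as $M$, then apply Theorem~\ref{thm:main} together with multiplicativity (Proposition~\ref{prop:multiplicativityconnectsum}), and your exponent bookkeeping in the non-spin case is correct. However, the step ``since $M$ is non-spin the form is odd and thus diagonalizable'' is a genuine gap: as pure algebra this implication is false, since there exist odd \emph{definite} unimodular forms that are not diagonalizable (e.g.\ $E_8\oplus\langle 1\rangle$). The conclusion does hold for intersection forms of smooth simply connected $4$-manifolds, but only because of Donaldson's theorem~\cite{donaldson}, which the paper invokes at precisely this point (alternatively, one can stabilize first so that the form becomes indefinite and apply the Hasse--Minkowski classification); without one of these inputs the definite case of your argument fails.

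The spin case has two further gaps. First, when $\sigma(M)>0$ your reference manifold does not exist: the number $-\sigma/16$ of $K3$ summands would be negative, and your claim that it is ``a non-negative integer exactly when $16\mid\sigma$'' is wrong in this case. The paper handles $\sigma\geq 0$ by using $\overline{K3}$ instead and then eliminating $\widetilde{Z}(\overline{K3})$ via the diffeomorphism $K3\#\overline{K3}\iso\#^{22}(S^2\times S^2)$, which simultaneously shows that $\widetilde{Z}(K3)$ is invertible --- necessary to make sense of the negative exponent in~\eqref{eq:simplyconnectedspin}. Second, even when $\sigma(M)=-16s\leq 0$, matching Euler characteristics forces the number of $S^2\times S^2$ summands in $\#^{s}K3\#^{b}(S^2\times S^2)$ to be $b=\tfrac12(\chi(M)-2-22s)$, which is non-negative only if $b_2(M)\geq\tfrac{11}{8}|\sigma(M)|$ --- exactly the (open) $\tfrac{11}{8}$-conjecture. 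So in general one must also stabilize $M$ itself by $a$ copies of $S^2\times S^2$, compare $M\#^{a}(S^2\times S^2)$ with $\#^{s}K3\#^{b}(S^2\times S^2)$, and then cancel $\widetilde{Z}(S^2\times S^2)^{a}$ using its invertibility (Theorem~\ref{thm:S2invertible}). Your proposal never invokes Theorem~\ref{thm:S2invertible}, yet without it neither this cancellation nor the possibly negative exponent $\tfrac12\left(\chi(M)-2+\tfrac{11}{8}\sigma(M)\right)$ in the stated formula can be justified.
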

\begin{proof}By Wall's theorem~\cite{Wall}, two connected, simply connected closed oriented $4$-manifolds are $S^2\times S^2$-stably diffeomorphic if and only if they have isomorphic intersection forms. As a consequence of Donaldson's theorem~\cite{donaldson} this is equivalent to requiring their Euler characteristic, signature and parity to agree. Since the parity of the intersection form of a simply connected smooth $4$-manifold is even if and only if the manifold admits a spin structure, the classification of $S^2\times S^2$-stable diffeomorphism classes of simply connected $4$-manifolds splits into a spin and non-spin case.

In the non-spin case, we note that $\CPt$ is a simply connected non-spin manifold with Euler characteristic $3$ and signature $1$. Since $M$ is a simply connected $4$-manifold which does not admit a spin structure, it cannot be a homology sphere and hence has $b_2^+(M)+b_2^-(M)= b_2(M) \geq 1$. Therefore, the simply connected $4$-manifold $N:=\#^{b_2^+(M)}\CPt \#^{b_2^-(M)} \overline{\CPt}$ has at least one factor of $\CPt$ or $\overline{\CP}^2$ and hence does not admit a spin structure. Since its Euler characteristic is $\chi(M)$ and its signature is $\sigma(M)$, it follows that $N$ is $S^2\times S^2$-stably diffeomorphic to $M$. Formula~\eqref{eq:simplyconnectednonspin} follows from multiplicativity of $Z$ up to $Z(S^4)$ factors.

For the spin case, we note that the $K3$-surface is a simply connected spin manifold with Euler characteristic $24$ and signature $-16$. By Rohlin's theorem~\cite{Rohlin}, the signature $\sigma(M)$ of a closed spin manifold $M$ is divisible by 16 and since $M$ is simply connected and hence $\chi \equiv \sigma~ (\mathrm{mod} ~2)$, the Euler characteristic $\chi(M)$ is even.
Assuming that $M$ has negative signature $\sigma(M)= -16 s \leq 0$, it follows that the following two simply connected spin manifolds have the same Euler characteristic (namely $\chi(M)+2a=2+22s+2b$) and signature (namely $\sigma(M) = -16s$) and are therefore $S^2\times S^2$-stably diffeomorphic:
\begin{equation}\label{eq:spincaseproof}M \#^{a} (S^2\times S^2) \hspace{2cm} \#^{s} K3 \#^{b}(S^2\times S^2)
\end{equation}
Here, $a$ and $b$ are non-negative integers such that $b-a =\frac{1}{2}\left( \chi(M)-2-22s\right)$. Since $\chi(M)$ is even, it is always possible to choose such integers\footnote{In fact, it is not known whether one can always choose $a=0$. This is equivalent to the simply connected case of the celebrated `$\frac{11}{8}$-conjecture'~\cite{matsumoto} that for any closed spin manifold $b_2 \geq \frac{11}{8} |\sigma|$.}. Formula~\eqref{eq:simplyconnectedspin} then follows from multiplicativity of $Z$ and invertibility of $Z(S^2\times S^2)$.
If $\sigma(M) = 16 s \geq 0$, we may replace $K3$ by $\overline{K3}$ in~\eqref{eq:spincaseproof} to get an analogous stable diffeomorphism. It follows from the orientation-preserving diffeomorphism $K3\# \overline{K3} \iso \#^{22} (S^2\times S^2)$ that $Z(K3)$ and $Z(\overline{K3})$ are invertible, and that we may replace $\widetilde{Z}(\overline{K3})$ by $\widetilde{Z}(K3)^{-1} \widetilde{Z}(S^2\times S^2)^{22}$ in the resulting formula. This again results in formula~\eqref{eq:spincaseproof}.
\end{proof}
\begin{remark}
Using Kreck's classification~\cite{Kreck}, analogous formulas can be derived in the non-simply connected case. See Remark~\ref{rem:simplyconnected} for a discussion of this in the simpler case of $\CPt$-stability.
\end{remark}

\section{$\CPt$-stability, the Gluck twist and emergent fermions}\label{sec:fermion}

To lift the spinnability assumption of Corollary~\ref{cor:main} 3. and to better understand the behaviour of field theories on manifolds which admit a spin structure, we turn to the question of $\CPt$-stability of semisimple oriented $4$-dimensional field theories. Since there is an orientation-preserving diffeomorphism $(S^2\times S^2) \#\CPt \iso \CPt \#\CPt\# \overline{\CP}^2$ (see for example~\cite[Exm 5.2.5]{GompfStip}), $\CPt$-stability is a stronger condition than $S^2\times S^2$-stability. And indeed, there are semisimple oriented topological field theories which are not $\CPt$-stable. 

\begin{example} \label{exm:CYCPT}
Recall that the handle diagram for $\CPt$ consists of a single $2$-handle attached along a $1$-framed unknot. In particular, using expression~\eqref{eq:CraneYetterformula}, the Crane-Yetter-Kauffman theory for a ribbon fusion category $\cC$ evaluates $\CPt$ to the \emph{Gauss sum}\[\CY_{\cC}(\CPt) =\cD_{\cC}~ \sum_{X_i \in \mathrm{Irr}(\cC)} \theta_i \dim(X_i)^2\]
where $\theta_i\in k$ denotes the \emph{ribbon twist} of the simple object $X_i$. In particular,  for $\cC$ the symmetric monoidal category $\sVect_k$ of \emph{super vector spaces}\footnote{As a monoidal category, $\sVect_k$ is the category of $\mathbb{Z}_2$-graded vector spaces and grading preserving linear maps. The symmetry isomorphism $\sigma_{V,W}: V\otimes W\to W\otimes V$ is defined in terms of the usual sign rule mapping homogenous vectors $v,w$ with grading $|v|, |w| \in \mathbb{Z}_2$ to $\sigma(v\otimes w) = (-1)^{|v||w|} w\otimes v$. Moreover, we always take $\tVect_k$ to be equipped with the unique ribbon structure for which all dimensions are positive.} with two simple objects $k_+$ and $k_-$ which both have the same dimension $\dim_{\pm} = 1$ but different twists $\theta_{\pm} = \pm 1$, we obtain $\CY_{\sVect_k}(\CPt) =0 $.
Therefore, since $S^2\times S^2$ is $\CPt$-stably diffeomorphic to $\CPt \# \overline{\CP}^2$ and since the indecomposable semisimple oriented topological field theory $\CY_{\sVect}$ vanishes on the latter but is invertible on the former, it follows that $\CY_{\sVect}$ is an example of a field theory which is $S^2\times S^2$-stable but not $\CPt$-stable.
\end{example}

\subsection{The Gluck twist and emergent fermions}
In condensed matter physics, the vanishing of the Gauss sum as in Example~\ref{exm:CYCPT} is more generally anticipated for theories with emergent fermions~\cite{condensedoverflow}\cite[Cor 3.6]{foldway}. In this section, we show that the vanishing of $Z(\CPt)$ is indeed equivalent to the presence of fermions amongst the `point particles' of the topological field theory. Mathematically, this `emergence of fermions' can be characterized as follows.

The \emph{Gluck twist} $\phi_{GT}$ is the diffeomorphism $S^2\times S^1 \to S^2 \times S^1$ defined as $ (x,y) \mapsto (\alpha(y)x,y)$, where $\alpha:S^1\to \mathrm{SO}(3)$ is a representative of the generator of $\pi_1(\mathrm{SO}(3)) =\mathbb{Z}/2\mathbb{Z}$.

\begin{definition} An oriented $4$-dimensional topological field theory $Z: \Bord_{4} \to \Vect_k$ \emph{has emergent fermions} if the Gluck twist $Z(\phi_{GT})$ acts non-trivially.
\end{definition}

\begin{remark}\label{rem:fermion}
Recall from Remark~\ref{rem:physics} that from the perspective of physics, the `fusion algebra' $Z(S^2\times S^1)$ may be understood as a decategorification, or trace, of the category (or more generally, object of some $2$-category) of `point particles' $Z(S^2)$ in an extended $4$-dimensional field theory. From this perspective, the Gluck twist, seen as an invertible $4$-dimensional bordism with corners $S^2\times [0,1] \To S^2\times [0,1]$ may be understood as encoding the operation of rotating a point particle by 360 degrees in $3$-space. Indeed, we point out that even though our field theory is `bosonic' in that it is a functor from an oriented (and not, say, spin) bordism category to the category of ordinary (and not, say, super) vector spaces, it may nevertheless have fermionic point particles which do behave non-trivially under this action of $\pi_1(\mathrm{SO}(3)) = \mathbb{Z}/2\mathbb{Z}$. In the condensed-matter physics community this phenomena is known as the `emergence of fermions' in a bosonic topological order~\cite{condensed}. In particular, we may decompose the vector space of `point particles' $Z(S^2\times S^1)$ into a vector space of bosons $Z(S^2\times S^1)_+$ on which the Gluck twist acts trivially and a vector space of fermions $Z(S^2\times S^1)_-$ on which the Gluck twist acts as minus the identity. 
\end{remark}

\begin{example}\label{exm:CYfermion} For the Crane-Yetter-Kauffman theory of Examples~\ref{exm:CYone} and~\ref{exm:CYCPT}, the Gluck twist $\CY_{\cC}(\phi_{GT}): \CY_{\cC}(S^2\times S^1) \to \CY_{\cC}(S^2\times S^1)$ is the endomorphism of $K_0(\Zsym(\cC))\otimes_{\mathbb{Z}}k$ mapping (the isomorphism class of) a transparent simple object $X_i$ to $\theta_i X_i$, where $\theta_i \in k^\times$ denotes the ribbon twist of $X_i$. In particular, in agreement with Remark~\ref{rem:fermion}, the Gluck twist acts non-trivially if and only if the ribbon category $\cC$ has \emph{fermions}\footnote{In our terminology, a fermion in a ribbon fusion category is a transparent simple object $f$ with non-trivial twist $\theta_f=-1$. This differs somewhat from the terminology in parts of the literature~\cite{foldway} where fermions are often also required to fulfill $f\otimes f \iso \mathrm{I}$. }  --- transparent simple objects with non-trivial twist.
\end{example}

\subsection{The Gluck twist is $\CPt$-trivial}
In the following, we show that the Gluck twist, seen as an invertible $4$-dimensional bordism $S^2\times S^1 \to S^2\times S^1$, is $\CPt$-stably diffeomorphic to the cylinder $S^2\times S^1\times[0,1]$. Hence, any $\CPt$-stable oriented topological field theory has trivial Gluck twist, and can therefore not have emergent fermions. 

For an orientation-preserving diffeomorphism $\phi: M\to N$ of closed oriented $3$-manifolds, we write $\mathrm{Cyl}(\phi):M\to N$ for the mapping cylinder, defined as the compact oriented $4$-dimensional bordism $M\xhookrightarrow{\phi \times \{0\}} N\times [0,1] \xhookleftarrow{\id_N \times \{1\}} N$. Given a topological field theory, we abuse notation and write $Z(\phi):Z(M)\to Z(N)$ for the linear map $Z(\mathrm{Cyl}(\phi))$.

All results of this section are direct consequences of the following diffeomorphism.

\begin{prop} \label{prop:diffeoCP1}There is an orientation-preserving diffeomorphism of the following oriented bordisms $S^2\times S^1 \to S^2\times S^1$:
\begin{equation}\label{eq:CPtdiffeo} \CPt\#\mathrm{Cyl}(\phi_{GT})  \iso \CPt\#(S^2\times S^1\times [0,1])
\end{equation}
\end{prop}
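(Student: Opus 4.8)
The plan is to reduce the statement to a standard diffeomorphism between $\CPt$-stabilized $S^2$-bundles over $S^2$, and then, following the template of Proposition~\ref{prop:manifolddecomposition}, to promote that diffeomorphism to one of bordisms. The guiding observation is that the Gluck twist $\phi$ extends over $D^3\times S^1$ (via $(z,y)\mapsto(\alpha(y)z,y)$, using the linear action of $\mathrm{SO}(3)$ on $D^3$) but is \emph{not} detectable after such a capping, so instead I would cap the boundary components with $S^2\times D^2$, over which $\phi$ does not extend. Capping both ends of the trivial cylinder $S^2\times S^1\times[0,1]$ with $S^2\times D^2$ closes it up to the trivial $S^2$-bundle $S^2\times S^2$, while capping both ends of $\mathrm{Cyl}(\phi)$ closes it up to the $S^2$-bundle over $S^2$ with clutching function $\alpha$, namely the nontrivial bundle, which is diffeomorphic to $\CPt\#\overline{\CP}^2$. (That the two bundles are genuinely distinct, with odd versus even intersection form, is exactly the statement that $\phi$ is nontrivial in $\pi_0\mathrm{Diff}(S^2\times S^1)$.)

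Connect-summing with $\CPt$ then identifies the two closed-up manifolds: one has $(S^2\times S^2)\#\CPt \iso \CPt\#\CPt\#\overline{\CP}^2 \iso (\CPt\#\overline{\CP}^2)\#\CPt$, which is precisely the orientation-preserving diffeomorphism recalled at the start of Section~\ref{sec:fermion}. Each of these $\CPt$-stabilized closed manifolds contains two distinguished cap-spheres $S^2\times\{0\}\subset S^2\times D^2$, each with trivial normal bundle, and deleting tubular neighborhoods of the caps recovers the bordisms $\CPt\#\mathrm{Cyl}(\phi)$ and $\CPt\#(S^2\times S^1\times[0,1])$. It therefore remains to arrange the closed diffeomorphism so that it carries the two cap-spheres to the two cap-spheres, preserving their normal framings, and restricts to the identity on each separating $S^2\times S^1$; restricting to the complement of the caps then yields the desired diffeomorphism of bordisms, which is the identity on both ends.

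The main obstacle is exactly this last promotion step: a closed diffeomorphism must be upgraded to a \emph{framed} diffeomorphism of pairs inducing the identity, rather than $\phi$, on each separating $S^2\times S^1$. As in Proposition~\ref{prop:manifolddecomposition}, I would carry this out by explicit Kirby calculus, presenting $\CPt\#\mathrm{Cyl}(\phi)$ by a handle diagram consisting of the $+1$-framed unknot of the $\CPt$-summand together with the handles of the Gluck-twisted collar, and exhibiting a sequence of handle slides over the $+1$-framed $2$-handle that untwists the collar while fixing the cap-spheres and the boundary. The role of the $+1$-framed sphere is precisely to absorb the $\pi_1(\mathrm{SO}(3))=\mathbb{Z}/2\mathbb{Z}$ framing ambiguity distinguishing $\phi$ from the identity, which is why a single $\CPt$ suffices and why the analogous statement fails for the unstabilized cylinder. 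Verifying that the slide can be performed rel boundary, keeping both caps unlinked and $0$-framed throughout, is the delicate point of the argument.
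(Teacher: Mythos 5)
Your first two paragraphs are sound topology: capping both ends of $\mathrm{Cyl}(\phi)$ with $S^2\times D^2$ does give the nontrivial bundle $S^2\widetilde{\times}S^2\iso \CPt\#\overline{\CP}^2$, capping the trivial cylinder gives $S^2\times S^2$, and after a further $\#\CPt$ both closed manifolds become $\CPt\#\CPt\#\overline{\CP}^2$. But the proof has a genuine gap exactly where you locate it, and the gap is not filled by what follows. A diffeomorphism of the closed, $\CPt$-stabilized manifolds gives no control whatsoever over the four cap-spheres: unlike the circle case in Proposition~\ref{prop:manifolddecomposition} (where null-isotopic embedded circles are unique up to isotopy, so any closed diffeomorphism can be isotoped to a diffeomorphism of pairs), embedded $2$-spheres in a $4$-manifold that are homologous and have the same normal bundle need not be isotopic --- the failure of exactly this kind of uniqueness is the Gluck phenomenon itself. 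So ``arrange the closed diffeomorphism to carry cap-spheres to cap-spheres, preserving framings, and to restrict to the identity on each separating $S^2\times S^1$'' is not an available move; it is essentially a restatement of the proposition. Your proposed repair by Kirby calculus is the right mechanism, but it is asserted rather than executed: you never write down a handle presentation of $\CPt\#\mathrm{Cyl}(\phi)$ (a mapping cylinder of the Gluck twist has no obvious handle diagram), never exhibit the slides, and explicitly defer the rel-boundary verification, which is the entire content of the statement.

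The paper's proof inverts your logic so that this issue never arises. It works only with the \emph{untwisted} bordism $W=\CPt\#(S^2\times S^1\times[0,1])$, presented by a relative Kirby diagram: a $\langle 0\rangle$-framed unknot (the surgery description of the incoming $S^2\times S^1$) together with a $1$-framed unknot (the $\CPt$ summand). Sliding the $1$-framed unknot over the $\langle 0\rangle$-framed one is, by the calculus of relative Kirby moves, a self-diffeomorphism $\psi$ of $W$ that is automatically the identity on $\partial_-W$; the only thing left to determine is the induced diffeomorphism $\psi_+$ of $\partial_+W=S^2\times S^1$. This is where your capping argument enters, but in the opposite direction: gluing $S^2\times D^2$ to $S^2\times D^2$ along $\psi_+$ yields $S^2\widetilde{\times}S^2$, so $\psi_+$ is not isotopic to the identity, and since $\pi_0\mathrm{Diff}^+(S^2\times S^1)=\mathbb{Z}/2\mathbb{Z}$ is generated by the Gluck twist, $\psi_+\simeq\phi$. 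Thus the paper constructs the bordism self-diffeomorphism first and uses the closed-manifold computation only to \emph{identify} its boundary restriction, whereas you construct a closed diffeomorphism first and then need to localize it --- the step that cannot be done softly. If you recast your Kirby-calculus paragraph as a single slide on the untwisted side, identified a posteriori in this way, your argument becomes the paper's.
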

\begin{proof} We construct a diffeomorphism $\psi$ of $\CPt \# \left(S^2\times S^1\times[0,1]\right)$ such that $\psi|_{S^2\times S^1\times \{0\}}$ is the identity and $\psi|_{S^2\times S^1\times \{1\}}$ is the Gluck twist $\phi_{GT}$. Recall that a \emph{relative Kirby diagram}~\cite[Sec 5.5]{GompfStip} for a connected compact oriented $4$-bordism $W:\partial_- W\to \partial_+ W$ with connected and non-empty $\partial_- W$ comprises a surgery diagram for the $3$-manifold $\partial_- W$, together with a Kirby diagram for the additional handles of $W$ superimposed on that surgery diagram. Following the convention of~\cite[Sec 5.5]{GompfStip}, we put brackets around the framing coefficients of the link components belonging to the surgery diagram of $\partial_-W$. Therefore, given a relative Kirby diagram, the $3$-manifold $\partial_-W$ can be obtained by doing surgery on the sublink with bracketed framing coefficients, and $\partial_+W$ can be obtained by doing surgery on the entire link diagram.

A relative Kirby diagram for $W=\CPt\#(S^2\times S^1 \times[0,1])$ consists of the disjoint union of a $\langle 0 \rangle $-framed unknot (a surgery diagram for $S^2\times S^1$) and a $1$-framed unknot (a $2$-handle attachment giving rise to the bordism $\CPt \#(S^2\times S^1\times[0,1]$). 
We construct $\psi$ as a sequence of relative Kirby moves~\cite[Thm 5.5.3]{GompfStip} such that, when restricted to bracketed components, $\psi$ is the identity and, when considered as a sequence of moves of surgery diagrams for $\partial_+W = S^2\times S^1$, $\psi$ corresponds to the Gluck twist. 
Explicitly, we define $\psi$ to be the handle slide of the $1$-framed unknot over the $\langle 0 \rangle$-framed unknot.  When restricted to $\partial_-W = S^2\times S^1$, $\psi$ is clearly the identity. When restricted to $\partial_+W$, $\psi$ defines an orientation-preserving diffeomorphism $\psi_+$ of $S^2\times S^1$. On the obvious embedding $D^2 \times S^1 \hookrightarrow S^2 \times S^1$, $\psi_+$ restricts to a diffeomorphism of $D^2 \times S^1$ which changes framing by one. Hence, it follows that $\psi_+$ acts trivially on $\pi_1(S^2 \times S^1)$. However, following the prescription of~\cite[Exm 5.5.8]{GompfStip} for gluing $S^2\times D^2$ along $\psi_+$ to $S^2\times D^2$ results in the non-trivial $S^2$-bundle $S^2\widetilde{\times} S^2$. Hence, $\psi_+$ is not  isotopic to the identity. But it follows from~\cite{Gluck} that up to isotopy, the Gluck twist $\phi_{GT}$ is the unique non-trivial orientation-preserving diffeomorphism of $S^2 \times S^1$ which acts trivially on $\pi_1(S^2 \times S^1)$. Therefore, $\psi_+$ is isotopic to the Gluck twist $\phi_{GT}$.
\end{proof}

\begin{corollary} \label{cor:glucktrivial}Let $Z:\Bord_4\to \Vect_k$ be a (not necessarily semisimple) $\CPt$-stable oriented $4$-dimensional topological field theory. Then, the Gluck twist $Z(\phi)$ acts trivially, that is there are no emergent fermions.
\end{corollary}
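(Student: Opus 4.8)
The statement to prove is Corollary~\ref{cor:glucktrivial}: a $\CPt$-stable oriented $4$-dimensional topological field theory has trivially acting Gluck twist. The plan is to deduce this directly from the diffeomorphism established in Proposition~\ref{prop:diffeoCP1}, namely
\[
\CPt\#\mathrm{Cyl}(\phi) \iso \CPt\#(S^2\times S^1\times[0,1]),
\]
together with the very definition of $\CPt$-stability.

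First I would apply the functor $Z$ to this orientation-preserving diffeomorphism of bordisms $S^2\times S^1\to S^2\times S^1$. Since $Z$ is a functor on diffeomorphism classes of bordisms, diffeomorphic bordisms are sent to the same linear map, so $Z(\CPt\#\mathrm{Cyl}(\phi)) = Z(\CPt\#(S^2\times S^1\times[0,1]))$ as morphisms $Z(S^2\times S^1)\to Z(S^2\times S^1)$. The content of $\CPt$-stability is precisely that connected sum with $\CPt$ does not change the value of $Z$ on a bordism; that is, the bordisms $\mathrm{Cyl}(\phi)$ and $\mathrm{Cyl}(\phi)\#\CPt$ are $\CPt$-stably diffeomorphic (trivially, with $n=1, m=0$), and likewise $S^2\times S^1\times[0,1]$ and its connected sum with $\CPt$. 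Hence $Z(\CPt\#\mathrm{Cyl}(\phi)) = Z(\mathrm{Cyl}(\phi)) = Z(\phi)$ and similarly $Z(\CPt\#(S^2\times S^1\times[0,1])) = Z(S^2\times S^1\times[0,1])$. The latter, being the image of a cylinder (identity bordism), is the identity map $\id_{Z(S^2\times S^1)}$.

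Combining these identifications gives $Z(\phi) = \id_{Z(S^2\times S^1)}$, which is exactly the assertion that the Gluck twist acts trivially, i.e.\ there are no emergent fermions. I would be careful to phrase the use of $\CPt$-stability correctly, since a priori the definition of $\CPt$-stability compares two \emph{different} bordisms that become diffeomorphic after stabilization, whereas here I am using it in the degenerate form that adding a $\CPt$ summand to a single bordism leaves its $Z$-value unchanged; this is an immediate instance of the definition applied to a bordism and itself. The main (and only real) subtlety is therefore bookkeeping: ensuring that the connected sums in Proposition~\ref{prop:diffeoCP1} are taken in the interior of the bordism so that the equation genuinely lives in $\End(Z(S^2\times S^1))$ and that the identification of $Z(S^2\times S^1\times[0,1])$ with the identity uses functoriality correctly. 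There is no hard analytic or geometric obstacle, since all the topological work was already carried out in Proposition~\ref{prop:diffeoCP1}; this corollary is a formal consequence.
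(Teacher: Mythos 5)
Your conclusion is correct and you have correctly identified Proposition~\ref{prop:diffeoCP1} as the only geometric input, but the way you invoke $\CPt$-stability contains a genuine error. You claim that $\mathrm{Cyl}(\phi)$ and $\mathrm{Cyl}(\phi)\#\CPt$ are $\CPt$-stably diffeomorphic ``trivially, with $n=1$, $m=0$.'' They are not. The definition requires a diffeomorphism $W\#^{n}\CPt\#^{m}\overline{\CP}^2 \iso W'\#^{n}\CPt\#^{m}\overline{\CP}^2$ with the \emph{same} $n,m$ on both sides; taking $W=\mathrm{Cyl}(\phi)\#\CPt$ and $W'=\mathrm{Cyl}(\phi)$ would require $\mathrm{Cyl}(\phi)\#^{n+1}\CPt\#^{m}\overline{\CP}^2 \iso \mathrm{Cyl}(\phi)\#^{n}\CPt\#^{m}\overline{\CP}^2$, which is impossible: the two sides have different Euler characteristics and signatures, and stabilization shifts both sides equally. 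Consequently the principle you extract --- that $\CPt$-stability forces $Z(W\#\CPt)=Z(W)$ --- is not a ``degenerate instance of the definition,'' and it is in fact false. For example, an invertible theory with $Z(S^4)^{-1}Z(\CPt)=\lambda\neq 1$ (such as the theory with partition function $\lambda^{\sigma(M)}$, which exists by the classification of invertible oriented $4$-dimensional theories) is $\CPt$-stable, since by Proposition~\ref{prop:multiplicativityconnectsum} stabilization multiplies both sides of any comparison by the same invertible scalars; yet for this theory $Z(W\#\CPt)=\lambda\, Z(W)\neq Z(W)$. This also explains why your first step (applying $Z$ to the diffeomorphism) cannot be completed for general $Z$: without indecomposability or invertibility there is no multiplicativity available to ``cancel'' the $\CPt$ summand from both sides of $Z(\CPt\#\mathrm{Cyl}(\phi))=Z(\CPt\#(S^2\times S^1\times[0,1]))$.

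The repair is immediate, and it is exactly the paper's proof: the diffeomorphism of Proposition~\ref{prop:diffeoCP1} is \emph{itself} the witness, with $k_+=1$ and $k_-=0$, that the two bordisms $\mathrm{Cyl}(\phi)$ and $S^2\times S^1\times[0,1]$ are $\CPt$-stably diffeomorphic. The definition of a $\CPt$-stable theory, applied to this pair of bordisms, then gives directly
\[
Z(\phi)=Z(\mathrm{Cyl}(\phi))=Z\bigl(S^2\times S^1\times[0,1]\bigr)=\id_{Z(S^2\times S^1)},
\]
with no need to apply $Z$ to the stabilized bordisms or to cancel any connected summand. In short: stability is a hypothesis about pairs of bordisms that become diffeomorphic after identical stabilizations, not a statement that connected sum with $\CPt$ is invisible to $Z$, and the whole point of the corollary is that the former, much weaker, hypothesis already suffices.
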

\begin{proof}By Proposition~\ref{prop:diffeoCP1}, the connected bordisms $\mathrm{Cyl}(\phi)$ and $S^2\times S^1 \times [0,1]$ are $\CPt$-stably diffeomorphic. Hence, $Z(\phi) = \id_{Z(S^2\times S^1)}$.
\end{proof}

\begin{example} Since any invertible topological field theory $Z$ is multiplicative on connected sums (up to $Z(S^4)$-factors) and invertible on $\CPt$, it is automatically $\CPt$-stable and can therefore not have emergent fermions.
\end{example}
\subsection{Semisimple field theories are $\CPt$-stable iff they have trivial Gluck twist}\label{sec:mainfermion}
In this section, we show that for semisimple field theories, the converse of Corollary~\ref{cor:glucktrivial} is true.\looseness=-2

\begin{theorem}\label{thm:emergentfermions} Let $Z$ be an indecomposable semisimple oriented $4$-dimensional topological field theory. Then, the following are equivalent:
\begin{enumerate}
\item $Z$ is $\CPt$-stable.
\item$Z$ has no emergent fermions, that is, the Gluck twist acts trivially;
\item One, or equivalently both of $Z(\CPt)$ and $Z(\overline{\CP}^2)$ are invertible;
\end{enumerate}
\end{theorem}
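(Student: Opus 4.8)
The plan is to prove the cycle $(1)\Rightarrow(2)\Rightarrow(3)\Rightarrow(1)$, exploiting throughout that indecomposability forces $Z(S^3)\iso k$ (Proposition~\ref{prop:decomposess}); by Proposition~\ref{prop:multiplicativityconnectsum} this makes the scalar $Z(S^4)$ invertible and $Z$ multiplicative under connected sums. The implication $(1)\Rightarrow(2)$ is exactly Corollary~\ref{cor:glucktrivial}, so the real content is the mutual relationship between triviality of the Gluck twist and invertibility of the scalars $Z(\CPt)$ and $Z(\overline{\CP}^2)$.

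First I would feed the diffeomorphism of Proposition~\ref{prop:diffeoCP1} through $Z$. Multiplicativity turns $\CPt\#\mathrm{Cyl}(\phi)\iso\CPt\#(S^2\times S^1\times[0,1])$ into $Z(S^4)^{-1}Z(\CPt)Z(\phi)=Z(S^4)^{-1}Z(\CPt)\,\id$, and cancelling the invertible $Z(S^4)^{-1}$ leaves
\[ Z(\CPt)\bigl(Z(\phi)-\id_{Z(S^2\times S^1)}\bigr)=0\in\End\!\bigl(Z(S^2\times S^1)\bigr). \]
Running the same argument on the orientation reversal of Proposition~\ref{prop:diffeoCP1} --- legitimate since $\overline{\CP}^2$ is the orientation reversal of $\CPt$, since $S^2\times S^1$ admits an orientation-reversing self-diffeomorphism, and since the Gluck twist is the unique nontrivial mapping class and hence preserved up to isotopy --- yields the companion identity $Z(\overline{\CP}^2)\bigl(Z(\phi)-\id\bigr)=0$. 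As $Z(\CPt)$ and $Z(\overline{\CP}^2)$ are scalars, each identity shows that invertibility of one of them forces $Z(\phi)=\id$; this gives $(3)\Rightarrow(2)$, and together with the next step it yields the equivalence between \emph{one} of the two scalars being invertible and \emph{both} being invertible.

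For $(2)\Rightarrow(3)$ the relation above is useless (it degenerates to $0=0$ once $Z(\phi)=\id$), so a genuinely different decomposition is required, and this is where I expect the main difficulty. The key geometric observation is that the nontrivial bundle $S^2\widetilde{\times} S^2$ is assembled by gluing two caps $S^2\times D^2$ along $S^2\times S^1$ through the Gluck twist, whereas $S^2\times S^2$ is the untwisted gluing. Writing $u=Z(S^2\times D^2):k\to Z(S^2\times S^1)$ and $\epsilon=Z(\overline{S^2\times D^2}):Z(S^2\times S^1)\to k$ for the unit and counit of the fusion algebra, these two gluings give
\[ Z(S^2\widetilde{\times} S^2)=\epsilon\circ Z(\phi)\circ u,\qquad Z(S^2\times S^2)=\epsilon\circ u. \]
If $Z(\phi)=\id$ the right-hand sides coincide, so $Z(S^2\widetilde{\times} S^2)=Z(S^2\times S^2)$, which is invertible by Theorem~\ref{thm:S2invertible}. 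Using the standard diffeomorphism $S^2\widetilde{\times} S^2\iso\CPt\#\overline{\CP}^2$ and multiplicativity, $Z(S^4)^{-1}Z(\CPt)Z(\overline{\CP}^2)=Z(S^2\times S^2)$ is invertible, so the product $Z(\CPt)Z(\overline{\CP}^2)$ is a nonzero scalar and both factors are invertible; this proves $(2)\Rightarrow(3)$ with both invertible.

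It then remains to close the loop with $(3)\Rightarrow(1)$. By the previous two paragraphs, $(3)$ implies $(2)$, which implies that both $Z(\CPt)$ and $Z(\overline{\CP}^2)$ are invertible. Then for $\CPt$-stably diffeomorphic bordisms, i.e.\ $M\#^n\CPt\#^m\overline{\CP}^2\iso N\#^n\CPt\#^m\overline{\CP}^2$, multiplicativity lets me cancel the invertible factor $Z(S^4)^{-n-m}Z(\CPt)^nZ(\overline{\CP}^2)^m$ from both sides to conclude $Z(M)=Z(N)$, exactly as in the proof of Theorem~\ref{thm:main}. The one step I expect to demand real care is the geometric identification underlying $(2)\Rightarrow(3)$ --- that the Gluck twist is precisely the gluing datum separating $S^2\times S^2$ from $\CPt\#\overline{\CP}^2$ --- together with the orientation-reversal bookkeeping needed to obtain the $\overline{\CP}^2$ companion relation.
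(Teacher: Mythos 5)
Your proof is correct, and it rests on the same geometric inputs as the paper's: $(1)\Rightarrow(2)$ is Corollary~\ref{cor:glucktrivial}; $(2)\Rightarrow(3)$ uses the fact that $\CPt\#\overline{\CP}^2 \iso S^2\widetilde{\times}S^2$ is the Gluck-twisted regluing of $S^2\times S^2$, together with Theorem~\ref{thm:S2invertible} and multiplicativity; and $(3)\Rightarrow(1)$ is the cancellation argument. Where you genuinely diverge is in handling the internal claim of (3) that invertibility of \emph{one} of $Z(\CPt)$, $Z(\overline{\CP}^2)$ forces invertibility of \emph{both}. The paper derives this from the auxiliary diffeomorphism $\overline{\CP}^2\#2\CPt \iso (S^2\times S^2)\#\CPt$ plus invertibility of $Z(S^2\times S^2)$, so that in the paper the implication $(3)\Rightarrow(2)$ is only obtained indirectly via $(3)\Rightarrow(1)\Rightarrow(2)$. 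You instead prove $(3)\Rightarrow(2)$ directly, by feeding Proposition~\ref{prop:diffeoCP1} through $Z$ to get the scalar relation $Z(\CPt)\bigl(Z(\phi)-\id\bigr)=0$ and its orientation-reversed companion $Z(\overline{\CP}^2)\bigl(Z(\phi)-\id\bigr)=0$, and then route ``one implies both'' through $(2)\Rightarrow(3)$. This is a clean reorganization: it uses Proposition~\ref{prop:diffeoCP1} linearly rather than only through its stable-diffeomorphism corollary, and it dispenses with the auxiliary diffeomorphism, at the cost of the orientation-reversal bookkeeping you correctly flag (which is sound: $S^2\times S^1$ admits an orientation-reversing self-diffeomorphism, and conjugation by it must preserve the unique non-trivial mapping class). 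One small correction: the maps $u=Z(S^2\times D^2)$ and $\epsilon=Z(\overline{S^2\times D^2})$ are \emph{not} the unit and counit of the fusion algebra as defined in the paper --- those are induced by $D^3\times S^1$ --- but rather of the other Frobenius structure $\bigl(Z(S^2\times S^1), Z(m_1\times S^2), Z(u_1\times S^2)\bigr)$ appearing in the proof of Theorem~\ref{thm:S2invertible}; your gluing formulas $Z(S^2\times S^2)=\epsilon\circ u$ and $Z(S^2\widetilde{\times}S^2)=\epsilon\circ Z(\phi)\circ u$ are correct regardless, so this is purely terminological.
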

\begin{proof} 
Corollary~\ref{cor:glucktrivial} shows $1. \Rightarrow 2.$ for all oriented $4$-dimensional topological field theories.

To prove $2.\Rightarrow 3.$, we note that $\CPt\# \overline{\CP}^2$ can be obtained from $S^2\times S^2$ by Gluck surgery (that is by removing an embedded $S^2\times D^2$ and gluing it back in via the Gluck twist). Hence, triviality of the Gluck twist implies that $Z(S^2\times S^2)= Z(\CPt \#\overline{\CP}^2)$. Since $Z(S^2\times S^2)$ is invertible (Theorem~\ref{thm:S2invertible}) and $Z$ is indecomposable and hence multiplicative under connected sums (Proposition~\ref{prop:multiplicativityconnectsum}), it follows that $Z(\CPt)$ and $Z(\overline{\CP}^2)$ are invertible.

If one of $Z(\CPt)$ and $Z(\overline{\CP}^2)$ is invertible, multiplicativity, invertibility of $Z(S^2\times S^2)$ and the orientation-preserving diffeomorphism $\overline{\CP}^2\#2\CPt \iso (S^2\times S^2) \#\CPt$ (or its orientation-reversal) imply that the other is also invertible. Hence, $3. \Rightarrow 1.$ follows immediately from multiplicativity of $Z$.
\end{proof}

\begin{remark} For indecomposable semisimple $4$-dimensional field theories $Z$, Theorem~\ref{thm:emergentfermions} implies that instead of requiring it for all connected oriented bordisms, $\CPt$-stability is equivalent to the --- a priori weaker --- condition that $Z(M) = Z(N)$ for all $\CPt$-stably diffeomorphic closed oriented $4$-manifolds $M$ and $N$. Indeed, since $S^2\times S^2$ and $\CPt \# \overline{\CP}^2$ are $\CPt$-stably diffeomorphic, $\CPt$-stability on closed manifolds implies that $Z(S^2\times S^2) = Z(\CPt \# \overline{\CP}^2)$ and hence that both $Z(\CPt)$ and $Z(\overline{\CP}^2)$ are invertible. 
\end{remark}
The proof of Theorem~\ref{thm:mainfermion} follows immediately from Theorem~\ref{thm:emergentfermions} by decomposing a semisimple field theory into its indecomposable components.
\begin{proof}[Proof of Theorem~\ref{thm:mainfermion}] By Corollary~\ref{cor:glucktrivial}, the Gluck twist acts trivially in any $\CPt$-stable oriented $4$-dimensional topological field theory. Conversely, suppose that $Z$ is a semisimple oriented topological field theory with trivial Gluck twist. By Proposition~\ref{prop:decomposess}, $Z \iso \bigoplus_i Z_i$ can be decomposed into a finite direct sum of indecomposable semisimple field theories $Z_i$ on which the Gluck twist still acts trivially.  Therefore, it follows from Theorem~\ref{thm:emergentfermions} that every component $Z_i$ is $\CPt$-stable, and hence so is $Z$.
\end{proof}

\subsection{Consequences of Theorem~\ref{thm:mainfermion}}\label{sec:fermionapp}
The classification of $\CPt$-stable diffeomorphism types is considerably simpler than in the $S^2\times S^2$-case. Indeed, it follows from Kreck's work that two closed oriented $4$-manifolds are $\CPt$-stably diffeomorphic if and only if they have the same Euler characteristic, signature, isomorphic fundamental groups and if the images $c_*[M]\in \mathrm{H}_4(\pi_1, \mathbb{Z})$ of their fundamental class under a classifying map $c:M\to K(\pi_1, \mathbb{Z})$ of their universal cover agree (see e.g.~\cite{teichner}).

\begin{proof}[Proof of Corollary~\ref{cor:mainfermion}] The data described in the last paragraph only depends on the oriented homotopy type of $M$.
\end{proof}

Similar to the $S^2\times S^2$-case, we may evaluate $Z$ on some $4$-manifold $M$ by evaluating it on a simpler `reference manifold' in the same $\CPt$-stable diffeomorphism class. In particular, we immediately obtain Corollary~\ref{cor:explicitfermion} as a refinement of Corollary~\ref{cor:explicitformula}.

\begin{proof}[Proof of Corollary~\ref{cor:explicitfermion}] We first consider the case that $Z$ does not have emergent fermions, that is that the Gluck twist acts trivially. In this case, it follows from Theorem~\ref{thm:mainfermion} that $Z$ is $\CPt$-stable.  By the above discussion, two connected, simply connected closed oriented $4$-manifolds are $\CPt$-stably diffeomorphic if and only if they have the same Euler characteristic and signature. In particular, any such $M$ is $\CPt$-stably diffeomorphic to $\#^{b_2^+(M)}\CPt \#^{b_2^-(M)}\overline{\CP}^2$. The `no fermions' entry of the table follows from multiplicativity of the indecomposable semisimple field theory $Z$. Invertibility of this entry follows from Theorem~\ref{thm:emergentfermions} 3.

If $Z$ has emergent fermions, it follows from Theorem~\ref{thm:emergentfermions} 3. that $\widetilde{Z}(\CPt) = \widetilde{Z}(\overline{\CP}^2) = 0$ and hence, by Corollary~\ref{cor:explicitformula}, that $M$ vanishes on simply connected manifolds which do not admit a spin structure. The value of $Z$ on simply connected spin manifold follows from Corollary~\ref{cor:explicitformula}. It is invertible, since $Z(S^2\times S^2)$ is invertible and since there is an orientation-preserving diffeomorphism $K3\#\overline{K3}\iso \#^{22} (S^2\times S^2)$.
\end{proof}

\begin{remark}\label{rem:simplyconnected} More generally, as explained in~\cite[Sec 1.4]{teichner}, if $M$ and $N$ are closed oriented $4$-manifolds with the same fundamental group $\pi$ and class $c_*[M] = c_*[N] \in H_4(\pi_1)$ and $Z$ is an indecomposable semisimple field theory without emergent fermions, it follows that\looseness=-2
\[\widetilde{Z}(M)= \widetilde{Z}(N)~\widetilde{Z}(\CPt)^{\frac{1}{2}\left( \Delta \chi + \Delta \sigma\right)}~\widetilde{Z}(\overline{\CP}^2)^{\frac{1}{2}\left(\Delta \chi - \Delta \sigma\right)}
\]where $\Delta\chi = \chi(M) - \chi(N)$ and $\Delta \sigma = \sigma(M)- \sigma(N)$. For example, taking $N=S^1\times S^3$ and noting that $Z(S^1\times S^3) = \dim Z(S^3) = 1$, this allows to evaluate the invariant on manifolds with fundamental group $\ZZ$ as follows:
\[\widetilde{Z}(M) = Z(S^4)^{-1}~\widetilde{Z}(\CPt)^{\frac{1}{2}\left( \chi(M)+ \sigma(M)\right)} \widetilde{Z}(\overline{\CP}^2)^{\frac{1}{2} \left( \chi(M)-\sigma(M)\right)}
\]
\end{remark}

\begin{example} As discussed in Example~\ref{exm:CYfermion}, the Crane-Yetter-Kauffman field theory $\CY_{\cC}$ has fermions if and only if $\cC$ has a transparent simple object with non-trivial twist. In particular, if $\cC$ is a modular tensor category, $\CY_{\cC}$ does not have fermions since the only transparent simple object of $\cC$ is the tensor unit $I$. Therefore, the non-fermion case of Corollary~\ref{cor:explicitfermion} may be seen as a generalization of the explicit computation of $\CY_{\cC}$ in~\cite{CYKCompute}. Of course, in this particular case, this explicit expression follows more directly from the fact that modularity of $\cC$ is equivalent to invertibility of the field theory $\CY_{\cC}$, and the classification of $4$-dimensional invertible oriented field theories in terms of Euler characteristic and signature~\cite[Sec 1.3]{SchommerPriesTori}.
\end{example}

Applying Theorem~\ref{thm:emergentfermions} to the Crane-Yetter-Kauffman theory results in a topological proof of the following algebraic characterization of ribbon fusion categories with fermions.
\begin{cor}\label{cor:Gausssum} Let $\cC$ be a ribbon fusion category over an algebraically closed field of characteristic zero. Then, $\cC$ contains a simple transparent object with non-trivial twist if and only if one, or equivalently both, of the Gauss sums $\tau_{\cC}^\pm:=\sum_{X_i \in \mathrm{Irr}(\cC)} \theta^{\pm 1}_i \dim(X_i)^2$ are zero.\looseness=-2
\end{cor}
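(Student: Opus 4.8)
The plan is to deduce this algebraic statement as a direct consequence of Theorem~\ref{thm:emergentfermions} applied to the Crane-Yetter-Kauffman field theory $\CY_{\cC}$. The bridge between the topology and the algebra is already laid out in the examples: by Example~\ref{exm:CYfermion}, the field theory $\CY_{\cC}$ has emergent fermions precisely when $\cC$ contains a simple transparent object with non-trivial twist, and by Example~\ref{exm:CYCPT}, the normalized values $\CY_{\cC}(\CPt)$ and $\CY_{\cC}(\overline{\CP}^2)$ are (up to the invertible factor $\cD_{\cC}^{-1} \CY_{\cC}(S^4)^2$) exactly the Gauss sums $\tau_{\cC}^{+}$ and $\tau_{\cC}^{-}$. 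So the first step is to record that $\CY_{\cC}$ is an indecomposable semisimple oriented $4$-dimensional topological field theory; indecomposability and semisimplicity follow from $\CY_{\cC}(S^3) \iso \Hom_{\Zsym(\cC)}(I,I)\iso k$ together with the semisimplicity of the fusion algebra $K_0(\Zsym(\cC))\otimes_{\ZZ} k$ established in Example~\ref{exm:CYone} (the characteristic-zero hypothesis is used exactly here).

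With those identifications in place, the theorem is immediate. The equivalence $2.\Leftrightarrow 3.$ of Theorem~\ref{thm:emergentfermions} says that $\CY_{\cC}$ has emergent fermions if and only if $\CY_{\cC}(\CPt)$ (equivalently $\CY_{\cC}(\overline{\CP}^2)$) fails to be invertible, i.e. equals zero since these are scalars. Translating each side through the two examples: the left-hand side becomes ``$\cC$ contains a simple transparent object with non-trivial twist,'' and the right-hand side becomes ``$\tau_{\cC}^{+} = 0$ (equivalently $\tau_{\cC}^{-}=0$),'' because the normalization factor relating $\CY_{\cC}(\CPt)$ to $\tau_{\cC}^{+}$ is itself invertible (here one uses that $\cD_{\cC}$ and $\CY_{\cC}(S^4) = \cD_{\cC}$ are nonzero, which holds over a field of characteristic zero). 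The ``one, or equivalently both'' phrasing is inherited verbatim from the corresponding clause in Theorem~\ref{thm:emergentfermions}~3.

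The main subtlety — though not really an obstacle — is bookkeeping of the normalizations, so that vanishing of the \emph{unnormalized} Gauss sum $\tau_{\cC}^{\pm}$ is genuinely equivalent to non-invertibility of the field-theoretic value $\CY_{\cC}(\CPt)$, rather than to the vanishing of some normalized quantity $\widetilde{\CY}_{\cC}(\CPt)$. I would make explicit that all the prefactors ($\cD_{\cC}$ and $\CY_{\cC}(S^4)$) are invertible scalars in $k$, so that vanishing transfers cleanly in both directions; this is where the characteristic-zero and algebraic-closedness assumptions on the base field do their work, exactly as in the footnote of Example~\ref{exm:CYone}. Everything else is a direct appeal to the already-proven topological theorem.
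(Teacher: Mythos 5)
Your proposal is correct and takes essentially the same route as the paper's own proof: both deduce the corollary from the equivalence $2\Leftrightarrow 3$ of Theorem~\ref{thm:emergentfermions} applied to $\CY_{\cC}$, using Example~\ref{exm:CYfermion} to translate ``emergent fermions'' into ``simple transparent object with non-trivial twist'' and Example~\ref{exm:CYCPT} to identify $\CY_{\cC}(\CPt)$ and $\CY_{\cC}(\overline{\CP}^2)$ with the Gauss sums up to invertible factors. Your additional bookkeeping of the prefactors $\cD_{\cC}$ and $\CY_{\cC}(S^4)$ is exactly what the paper compresses into the word ``proportional,'' so nothing is missing.
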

\begin{proof}[Proof of Corollary~\ref{cor:Gausssum}]Since $\CY_{\cC}$ is an indecomposable semisimple field theory which has emergent fermions if and only if $\cC$ has a simple transparent object with non-trivial twist (Example~\ref{exm:CYfermion}), and since $\CY_{\cC}(\CPt)$ and $\CY_{\cC}(\overline{\CP}^2)$ are proportional to the Gauss sums (Example~\ref{exm:CYCPT}), Corollary~\ref{cor:Gausssum} follows from the equivalence $2\Leftrightarrow 3$ in Theorem~\ref{thm:emergentfermions}.
\end{proof}

\begin{remark} An equivalent, purely algebraic proof of Corollary~\ref{cor:Gausssum} can be obtained starting from the following formula~\cite[Lem 6.10]{EGNO} which holds for any simple object $Y$ in a ribbon fusion category $\cC$:
\begin{equation}\label{eq:EGNO}\theta_Y \sum_{X \in \mathrm{Irr}(\cC)} \theta_{X}\dim(X) s_{X,Y} = \dim(Y)\tau_{\cC}^+
\end{equation}
Here, $\theta_X$ and $\dim(X)$ denote the twist and dimension of a simple object $X$, respectively, and $s_{X,Y}$ denotes the value of the `$S$-matrix' of $\cC$; the evaluation of a ($0$-framed) Hopf link whose components are labelled by the simple objects $X$ and $Y$. As a direct consequence of~\eqref{eq:EGNO} (and as a special case of~\cite[Prop 6.11]{EGNO}) it follows that
\begin{equation}\label{eq:algS2}\tau_{\cC}^+ \tau_{\cC}^- = \dim(\cC) \tau_{\Zsym(\cC)}^+
.\end{equation} In particular, if every transparent object $Y$ of $\cC$ has trivial twist $\theta_Y = 1$, then $\tau_{\Zsym(\cC)}^+ = \dim(\Zsym(\cC))$ and hence $\tau_{\cC}^+\tau_{\cC}^- = \dim(\cC) \dim(\Zsym(\cC))$ is non-zero. Conversely, if $Y$ is a simple transparent object, it follows from~\eqref{eq:EGNO} and invertibility of $\dim(Y)$ that \begin{equation}\label{eq:alg2}\theta_Y \tau_{\cC}^+ = \tau_{\cC}^+.\end{equation} Therefore, the existence of a simple transparent object $Y$ with twist $\theta_Y =-1$ implies that $\tau_{\cC}^+=0$, and similarly that $\tau_{\cC}^-=0$.
Note that equations~\eqref{eq:algS2} and~\eqref{eq:alg2} may be understood as algebraic analogues of the diffeomorphism between $\CPt\# \overline{\CP}^2$ and the non-trivial $S^2$-bundle $S^2\widetilde{\times} S^2$, and of Proposition~\ref{prop:diffeoCP1}, respectively.
\end{remark}

\bibliographystyle{initalpha}
\bibliography{SS4D}

\end{document}